\documentclass[reqno]{amsart}%默认值为reqno，表示公式编号放在公式右边，对齐在页面右边界处；如果使用选项leqno，表示公式编号放在公式左侧。
%%%%%%%%%%%%%%%%%%%%%%%%%%%%%%%%%%%%%%%%%%%%
%%%                 设置页边距           %%%
%%%%%%%%%%%%%%%%%%%%%%%%%%%%%%%%%%%%%%%%%%%%
\setlength{\oddsidemargin}{0mm}
\setlength{\evensidemargin}{0mm} \setlength{\topmargin}{0mm}
\setlength{\textheight}{220mm} \setlength{\textwidth}{155mm}
%%%%%%%%%%%%%%%%%%%%%%%%%%%%%%%%%%%%%%%%%%%%

\usepackage{bbm}
\usepackage{url}%\url{www.xxx.com}
\usepackage{stmaryrd}
\usepackage{mathrsfs}
\usepackage{cases}%才可以用numcases这种
\usepackage{amsfonts}
\usepackage{amssymb}%\leqslant 小于等于   \geqslant 大于等于
\usepackage{amsmath}
\usepackage{tikz}
\usepackage{extarrows}%可以在箭头上下写字 $$ A\xlongequal[sub-script]{super-script}B $$
\usepackage{enumerate}

\allowdisplaybreaks[4]
\newtheorem{theorem}{Theorem}[section]
\newtheorem{lemma}[theorem]{Lemma}
\newtheorem{proposition}[theorem]{Proposition}
\newtheorem{corollary}[theorem]{Corollary}
\theoremstyle{definition}
\newtheorem{definition}[theorem]{Definition}

\newtheorem{remark}[theorem]{Remark}
\numberwithin{equation}{section}

%%%My setting%%%%%%%%%%%%%%%%%%%%%%%%%%%%%%%%%%%%%%%%%%%%%%%%%%%

%%%ABREVIATIONS%%%%%%
\let\al=\alpha
\let\b=\beta

\let\d=\delta

\let\la=\lambda
\let\r=\rho
\let\s=\sigma

\let\om=\omega
\let\G= \Gamma

\let\Om=\Omega

\let\th=\theta

\let\ep=\epsilon
\let\va=\varphi
\let\k=\kappa
\def\bbR{\mathbb{R}}
\def\bbS{\mathbb{S}}
\def\bbC{\mathbb{C}}

\newcommand{\be}{\begin{equation*}}
\newcommand{\ee}{\end{equation*}}
\newcommand{\ben}{\begin{equation}}
\newcommand{\een}{\end{equation}}
\newcommand{\bn}{\begin{enumerate}}
\newcommand{\en}{\end{enumerate}}
\newcommand{\bs}{\backslash}

\def\tE{\widetilde{E}}
\def\tF{\widetilde{F}}
\def\tQ{\widetilde{Q}}
\def\tla{\widetilde{\lambda}}

%%%%%%%%%%%%%%%%%%%%%%%%%%%%%%%%%%%%%%%%%%%%%%%%%Dongyong's notion%%%%%%%%%%%%%%%%%%%%%%%%%%%%%%%%%%%%%%%
%%%%%%%%%%%%%%%%%%%%%%%%%%%%%%%%%%%%%%%%%%%%%%%%%%%%%%%%%%%%%%%%%%%%%%%%%%%%%%%%%%%%%%%%%%%%%%%%%%%%%%%%%%%%%
\def\apq{{A_{p,\,q}}}
\def\apn{{A_p}}
\def\bmo{{{\rm BMO}(\mathbb R^n)}}
\def\bmoz{{{\rm BMO}_\lambda(\mathbb R^n)}}
\def\cmo{{{\rm CMO}(\mathbb R^n)}}
\def\fz{{\infty}}
\def\apn{{A_p(\mathbb R^n)}}
\def\rr{{\mathbb R}}
\def\rn{{{\rr}^n}}
\def\ls{\lesssim}
\def\lpwp{{L^p_{w^p}(\rn)}}

%%%%%%%%%%%%%%%%%%%%%%%%%%%%%%%%%%%%%%%%%%%%%%%%%%%%%%%%%%%%%%%%%%%%%%%%%%%%%%%%%%%%%%%%%%%%%%%%%%%%%%%%%%%

\begin{document}
\title[A revisit on the compactness of commutators]
{A revisit on the compactness of commutators}
\author{WEICHAO GUO}
\address{School of Mathematics and Information Sciences, Guangzhou University, Guangzhou, 510006, P.R.China}
\email{weichaoguomath@gmail.com}
\author{HUOXIONG WU}
\address{School of Mathematical Sciences, Xiamen University,
Xiamen, 361005, P.R. China} \email{huoxwu@xmu.edu.cn}
\author{DONGYONG YANG}
\address{School of Mathematical Sciences, Xiamen University,
Xiamen, 361005, P.R. China} \email{dyyang@xmu.edu.cn}

\begin{abstract}
A new characterization of ${\rm CMO}(\mathbb R^n)$ is established by the local mean oscillation.
Some characterizations of iterated compact commutators on weighted Lebesgue spaces are given, which are new
 even in the unweighted setting for the first order commutators.
\end{abstract}
\subjclass[2010]{42B20; 42B25.}
\keywords{compactness, commutator, singular integral, fractional integral, ${\rm CMO}(\mathbb R^n)$}
\thanks{Supported by the NSF of China (Nos.11771358, 11471041, 11701112, 11671414), the NSF of Fujian Province of China (Nos.2015J01025, 2017J01011)
and the China postdoctoral Science Foundation (No. 2017M612628).}

\maketitle

\section{Introduction}\label{s1}

Given a locally integrable function $b$, the commutator $[b,T]$
  is defined by
  \be
  [b,T]f(x):=b(x)T(f)(x)-T(bf)(x)
  \ee
 for suitable functions $f$. Let $m\in\mathbb N$.  The iterated commutator $T_b^m$ with $m\geq 2$ is defined by
  \ben\label{e-itera com}
  T_b^m(f):=[b,T_b^{m-1}],\ \ \ T_b^1f:=[b,T]f.
  \een
 The study of equivalent characterization on $L^p(\rn)$-compactness of commutators $[b, T]$
of singular integral operators $T$ was initiated by Uchiyama in his remarkable work \cite{Uchiyama78TohokuMathJ},
where he  improved the notable equivalent characterization of $L^p(\rn)$-boundedness result of Coifman-Rochberg-Weiss \cite{CoifmanRochbergWeiss76AnnMath}.
More precisely,
  let $T_{\Omega}$ be the integral operator with homogeneous kernel $\Omega$ defined by setting, for suitable function $f$
and $x\notin {\rm supp}(f)$,
\begin{equation*}
  T_{\Omega}f(x):=\int_{\mathbb{R}^n}\frac{\Omega(x-y)}{|x-y|^{n}}f(y)dy,
\end{equation*}
where $\Omega$ is a homogeneous function of degree zero, satisfying $\Om\in Lip_1(\mathbb{S}^{n-1})$ and the following mean value zero property:
\begin{equation}\label{mean value zero}
  \int_{\mathbb{S}^{n-1}}\Omega(x')d\sigma(x')=0.
\end{equation}
Coifman, Rochberg and Weiss \cite{CoifmanRochbergWeiss76AnnMath} proved that if a function $b\in {\rm BMO}(\mathbb R^n)$, then the
commutator $[b,T_\Omega]$  is bounded on $L^p(\mathbb R^n)$ for any $p\in(1, \infty)$;
they also proved that, if $[b, R_j]$ is bounded on $L^p(\mathbb R^n)$ for every Riesz transform $R_ j$, $j=1,2,\cdots, n$, then $b\in {\rm BMO}(\mathbb R^n)$.
 In \cite{Uchiyama78TohokuMathJ}, Uchiyama further showed that the commutator $[b,T_{\Omega}]$ is bounded (compact resp.) on $L^p(\rn)$ if and only if
the symbol $b$ is in $\bmo$ ($\cmo$ resp.). Here and in what follows, $\cmo$ is the closure of $C_c^{\infty}(\mathbb{R}^n)$ in the ${\rm BMO}(\mathbb{R}^n)$ topology.
Since then, the work on compactness of commutators of singular and fractional integral operators and its applications to PDE's
have been paid more and more attention; see, for example, \cite{Iwaniec92,KrantzLi01JMAAb,ChenDingWang09PA,Taylor11,ClopCruz13AASFM,ChaffeeTorres15PA,ChenHu15CMB}
and the references therein.

%In \cite{Uchiyama78TohokuMathJ}, Uchiyama gave an equivalent characterization of $\cmo$ in terms of the mean oscillation,
%which is the key tool in the characterization of compactness of $[b,T_{\Omega}]$ by $\cmo$.

%In the last years, the local mean oscillation has been found very effective in a variety of questions,
%see \cite{Lerner11Lecture} for more details.
Very recently, Lerner-Ombrosi-Rivera-R\'ios \cite{LernerOmbrosiRivera17arxiv} revisited the boundedness of commutators,
providing a new viewpoint for the Bloom-type characterization of two-weighted boundedness of the iterated bounded commutators on Lebesgue spaces.
In particular, in \cite{LernerOmbrosiRivera17arxiv}, the necessity of boundedness of commutators was proved for a rather wide class of operators,
by a technique deeply depending on the local mean oscillation
instead of mean oscillation of functions.
Inspired by \cite{LernerOmbrosiRivera17arxiv}, in this article, we consider the equivalent compactness of iterated commutators of singular and fractional integral operators
on weighted Lebesgue spaces.
To state our main results, we first recall some notions and notations.
%By establishing a new characterization of ${\rm BMO}(\mathbb R^n)$ in terms of the local mean oscillation,
%we obtain some new characterizations of compactness commutators.

\begin{definition}\label{d-bmo}
The space of functions with bounded mean oscillation, denoted by ${\rm BMO}(\mathbb{R}^n)$, consists of all
$f\in L_{loc}^1(\mathbb{R}^n)$ such that
\be
\|f\|_{\bmo}:=\sup_{Q\subset \mathbb{R}^n}\mathcal{O}(f;Q)<\infty,
\ee
where
\be
f_Q:=\frac{1}{|Q|}\int_{Q}f(y)dy\,\,{\rm and}
\,\,
\mathcal{O}(f;Q):=\frac{1}{|Q|}\int_Q |f(x)-f_Q|dx.
\ee
\end{definition}

\begin{remark}\label{r-bmo norm equiv re-im}
We use $Re(f)$ and $Im(f)$ to denote the real and imaginary part of $f$, respectively.
Observe for a complex-valued $f$, $f\in \bmo$ if and only if $Re(f), Im(f)\in \bmo$.
Moreover, $\|f\|_{\bmo}\sim \|Re(f)\|_{\bmo}+\|Im(f)\|_{\bmo}$.
\end{remark}

The following class of $\apn$ was introduced by Muckenhoupt \cite{Muckenhoupt72TAMS} to
study the the weighted norm inequalities of Hardy-Littlewood maximal operators, and  $\apq$ weights was  introduced by
Muckenhoupt--Wheeden  \cite{MuckenhouptWheeden74TAMS} to
study the weighted norm inequalities of fractional integrals, respectively.

\begin{definition}\label{d-Ap weight}
For $1<p<\infty$, the Muckenhoupt class $A_p$ is the set of locally integrable weights $\om$ such that
\be
[\om]_{A_p}^{1/p}:=\sup_{Q}\left(\frac{1}{|Q|}\int_{Q}\om(x)dx\right)^{1/p}\left(\frac{1}{|Q|}\int_{Q}\om(x)^{1-p'}dx\right)^{1/p'}<\infty,
\ee
where $\frac1p+\frac1{p'}=1$. For $1<p,q<\infty$, $1/q=1/p-\al/n$ with $0<\al<n$,
a weight function $\om$ is called an $A_{p,\,q}$ weight if
$$[\om]_{A_{p,\,q}}^{1/q}:=\sup_Q\left(\frac1{|Q|}\int_Q \om^q(x)dx\right)^{1/q}\left(\frac1{|Q|}\int_Q \om^{-p'}(x)dx\right)^{1/p'}<\fz.$$
\end{definition}

Let $\alpha\in [0,n)$. The singular or fractional integral operator with homogeneous kernel is defined by
\begin{equation}\label{integral operator, linear case}
  T_{\Omega,\,\alpha}f(x):=\int_{\mathbb{R}^n}\frac{\Omega(x-y)}{|x-y|^{n-\alpha}}f(y)dy,
\end{equation}
where $\Omega$ is a homogeneous function of degree zero and satisfies the mean value zero property (\ref{mean value zero}) when $\alpha= 0$.
%\begin{equation}\label
%  \int_{\mathbb{S}^{n-1}}\Omega(x')d\sigma(x')=0.
%\end{equation}
%  Let $T$ be one of the linear operators mentioned above, $b$ be a locally integrable function, the commutator $[b,T]$
%  is defined by
%  \be
%  [b,T]f(x):=b(x)T(f)(x)-T(bf)(x).
%  \ee
%  The iterated commutator $T_b^m$ with $m\geq 2$ is defined by
%  \be
%  T_b^m(f):=[b,T_b^{m-1}],\ \ \ T_b^1f=[b,T]f.
%  \ee
%  The function $b$ is usually called the symbol of the commutator $T_b^m$.

Our first main result can be formulated as follows.
\begin{theorem}\label{theorem, necessity}
  Let $\om\in \apq$, $1<p,q<\infty$, $0\leq \al<n$, $1/q=1/p-\al/n$, $m\in \mathbb{Z}^+$.
  Let $\Om$ be a bounded measurable function on $\bbS^{n-1}$, which does not change sign and is not equivalent to zero
  on some open subset of  $\bbS^{n-1}$. If $(T_\al)_b^m$ is a compact operator from $L^p(\om^p)$ to $L^q(\om^q)$, then
  $b\in \cmo$.
\end{theorem}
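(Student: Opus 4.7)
The plan is to argue by contrapositive: assuming $b\notin\cmo$, we shall produce a sequence $\{f_k\}\subset L^p(\om^p)$ with $\|f_k\|_{L^p(\om^p)}=1$, $f_k\rightharpoonup 0$ weakly, and $\|(T_\al)_b^m f_k\|_{L^q(\om^q)}\gtrsim 1$ uniformly in $k$, contradicting the compactness of $(T_\al)_b^m$. By Remark~\ref{r-bmo norm equiv re-im} it suffices to treat real-valued $b$. The essential new input is the characterization of $\cmo$ via local mean oscillation announced in the abstract: $b\in\cmo$ if and only if $b\in\bmo$ and, for a fixed $\la\in(0,1/2)$, the local mean oscillation of $b$ on a cube $Q$ vanishes in each of the three degenerating regimes $\ell(Q)\to 0$, $\ell(Q)\to\fz$, and $|c(Q)|\to\fz$. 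If $b\notin\cmo$, one of these conditions fails, producing $c_0>0$ and a sequence of cubes $\{Q_k\}$ whose local mean oscillation is at least $c_0$ and which lies in one of the three degenerating regimes (the subcase $b\notin\bmo$ reduces to this by a scaling argument).

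By the definition of local mean oscillation, each $Q_k$ admits a median $m_k$ of $b$ over $Q_k$ and disjoint subsets $E_k^+,E_k^-\subset Q_k$ with $|E_k^\pm|\gtrsim\la|Q_k|$ such that $b(x)-m_k\ge c_0/4$ on $E_k^+$ and $m_k-b(x)\ge c_0/4$ on $E_k^-$. An induction on $m$ using (\ref{e-itera com}) gives the kernel representation
\[
(T_\al)_b^m f(x)=\int_\rn (b(x)-b(y))^m\,\frac{\Om(x-y)}{|x-y|^{n-\al}}\,f(y)\,dy.
\]
The median gap yields $(b(x)-b(y))^m\ge(c_0/2)^m$ with a fixed sign on $E_k^+\times E_k^-$, and since $\Om$ does not change sign, there is no cancellation in the integrand when $f$ is supported in $E_k^-$; indeed the integrand is bounded below in absolute value by $(c_0/2)^m\,|\Om(x-y)|/|x-y|^{n-\al}$ on $E_k^+\times E_k^-$. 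This is the key replacement of Uchiyama's mean-oscillation technique by the median-based framework of Lerner--Ombrosi--Rivera--R\'ios: the median structure, rather than mean oscillation, extends cleanly to iterated commutators with $m\ge 2$.

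For the test functions, fix an open spherical cap $U\subset\bbS^{n-1}$ on which $|\Om|\ge c_1>0$. Following Uchiyama's companion-cube construction, for each $Q_k$ we select a translate $\tQ_k$ of $Q_k$ along the axis of $U$, together with a subset $\tE_k^+\subset\tQ_k$ adapted so that both $(x-y)/|x-y|\in U$ and $b(x)-b(y)\ge c_0/8$ hold on $\tE_k^+\times E_k^-$. Setting $\s:=\om^{-p'}$, define the normalized test function $f_k:=\s(E_k^-)^{-1/p}\,\s\,\mathbf{1}_{E_k^-}$, so that $\|f_k\|_{L^p(\om^p)}=1$. The lower bound above gives $|(T_\al)_b^m f_k(x)|\gtrsim c_0^m\,\ell(Q_k)^{\al-n}\,\s(E_k^-)^{1/p'}$ for $x\in\tE_k^+$, and the $\apq$ condition (in the form $\om^q(\tE_k^+)^{1/q}\,\s(E_k^-)^{1/p'}\sim|Q_k|^{1-\al/n}$ via the $A_\fz$ doubling of $\s$ and $\om^q$) upgrades this to $\|(T_\al)_b^m f_k\|_{L^q(\om^q)}\gtrsim c_0^m$ uniformly in $k$. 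Passing to a further subsequence---by scale telescoping, spatial escape, or rapid shrinkage in the respective regimes---ensures that the supports $E_k^-$ are effectively disjoint, so that $f_k\rightharpoonup 0$ weakly in $L^p(\om^p)$, completing the contradiction with compactness.

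The principal obstacle will be the simultaneous realization of the median sign condition on $b$ (tied to the location of $Q_k$) and the directional sign condition on $\Om$ (tied to the cone $U$) on a single product set $\tE_k^+\times E_k^-$. The companion cube $\tQ_k$ handles the directional side, but the median condition on $Q_k$ alone does not determine the behavior of $b$ on $\tQ_k$; an auxiliary median-type selection on $\tQ_k$ or a pigeonhole argument on subcubes of $Q_k$ will be required to synchronize the two sign conditions. Performing this reconciliation uniformly across the three degeneracy regimes, and translating the resulting pointwise lower bound into the weighted $L^q(\om^q)$ setting via the $\apq$-reverse-H\"older and $A_\fz$ machinery, constitutes the main analytic content of the proof.
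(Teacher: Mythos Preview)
Your overall strategy is sound and parallels the paper's, but the proposal remains a plan rather than a proof: you explicitly flag the ``principal obstacle'' of synchronizing the median-based sign condition on $b$ with the directional sign condition on $\Omega$, and leave it unresolved. This synchronization is precisely the content of the paper's Proposition~\ref{proposition, key lower estimates} (adapted from Lerner--Ombrosi--Rivera-R\'ios), and the resolution differs from what you sketch. Rather than placing both sets $E_k^{\pm}$ inside $Q_k$ and then attempting to transfer one of them to a companion cube $\tQ_k$, the paper fixes the companion cube $P$ first (translated far enough along the axis of the cap $U$ so that $(x-y)/|x-y|$ lies in the cone over $U$ for all $(x,y)\in Q\times P$), and then takes $E\subset Q$ to be the set where $|b(x)-m_b(P)|\ge a_\la(b;Q)$ and $F\subset P$ a half of $P$ determined by the median $m_b(P)$. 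The key point is that the constant against which $b|_Q$ is compared is the median of $b$ over the \emph{companion} cube $P$, not over $Q$; this makes the sign of $b(x)-b(y)$ automatic on $E\times F$ and eliminates any further pigeonholing on $\tQ_k$. Your setup with $E_k^{\pm}\subset Q_k$ based on $m_b(Q_k)$ does not transfer to the companion cube without additional work, and your proposed ``auxiliary median-type selection on $\tQ_k$'' would require an oscillation hypothesis on $\tQ_k$ that you do not have.

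Your contradiction mechanism via weak convergence $f_k\rightharpoonup 0$ is a legitimate alternative to the paper's route. The paper instead proves an upper estimate $\|(T_{\Omega,\al})_b^m f\|_{L^q(2^{d+1}Q\setminus 2^d Q,\,\om^q)}\lesssim 2^{-\delta dn/p}d^m$ (Proposition~\ref{proposition, upper estimates}) and uses it together with the lower bound to show $\|(T_{\Omega,\al})_b^m f_j-(T_{\Omega,\al})_b^m f_k\|_{L^q(\om^q)}\ge C_0\theta_0^m$ directly, bypassing any weak-limit argument. That upper bound requires $b\in\bmo$, which the paper obtains from Corollary~\ref{corollary, necessity} (compactness implies boundedness implies $b\in\bmo$); your dismissal of the case $b\notin\bmo$ ``by a scaling argument'' is vague and should be replaced by this boundedness step. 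Finally, your test function $f_k=\sigma(E_k^-)^{-1/p}\sigma\chi_{E_k^-}$ differs from the paper's $f=\bigl(\int_F\om^p\bigr)^{-1/p}\chi_F$; both normalizations have unit $L^p(\om^p)$ norm, but the $\apq$ balancing in the lower-bound computation would need to be rewritten accordingly.
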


We say that $T_{K_0}$ is a $\r$-type Calder\'{o}n-Zygmund operator on $\bbR^n$ if $T_{K_0}$ is bounded on $L^2$ and it admits the following
representation
\ben\label{representation of singular integral operator}
T_{K_0}f(x)=\int_{\bbR^n}K_0(x,y)f(y)dy\ \ \ \text{for all}\ x\notin \text{supp}f
\een
with kernel $K_0$ satisfying the size condition
\be
|K_0(x,y)|\leq \frac{C_{K_0}}{|x-y|^n}
\ee
and a smoothness condition
\be
|K_0(x,y)-K_0(x',y)|+|K_0(y,x)-K_0(y,x')|\leq \r\left(\frac{|x-x'|}{|x-y|}\right)\frac{1}{|x-y|^n},
\ee
for all $|x-y|>2|x-x'|$, where $\r: [0,1]\rightarrow [0,\infty)$ is a modulus of continuity, that is, $\rho$ is a continuous, increasing,
subadditive function with $\r(0)=0$ and satisfies the following Dini condition:
\be
\int_{0}^1\r(t)\frac{dt}{t}<\infty.
\ee
Similarly, for $\al\in (0,n)$, we define the $\r$-type fractional integral operator by
\ben\label{representation of fractional integral operator}
T_{K_{\al}}f(x):=\int_{\bbR^n}K_{\al}(x,y)f(y)dy\ \ \ \text{for all}\ x\notin \text{supp}f
\een
with kernel $K_{\al}$ satisfying the size condition
\be
|K_{\al}(x,y)|\leq \frac{C_{K_{\al}}}{|x-y|^{n-\al}},
\ee
and the smooth condition
\be
|K_{\al}(x,y)-K_{\al}(x',y)|+|K_{\al}(y,x)-K_{\al}(y,x')|\leq \r\left(\frac{|x-x'|}{|x-y|}\right)\frac{1}{|x-y|^{n-\al}},
\ee
for all $|x-y|>2|x-x'|$, where $\r$ is a modulus of continuity mentioned above.

Note that $T_{K_{\al}}(f)\leq C_{K_{\al}}I_{\al}(|f|)$, where $I_{\al}$ is the classical fractional integral operator defined by
\be
I_{\al}f(x):=\int_{\bbR^n}\frac{f(y)}{|x-y|^{n-\al}}dy.
\ee
Thus, the boundedness of  $T_{K_{\al}}$ is automatically valid by the classical
boundedness of fractional integral operator.

Now we state our second main result.

\begin{theorem}\label{theorem, sufficiency}
  Let $\om\in \apq$, $1<p,q<\infty$, $0\leq \al<n$, $1/q=1/p-\al/n$, $m\in \mathbb{Z}^+$.
  If $b\in \cmo$, then
   $(T_{K_{\al}})_b^m$ is a compact operator from $L^p(\om^p)$ to $L^q(\om^q)$.
\end{theorem}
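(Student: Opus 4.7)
The plan is the standard two-step compactness scheme: first reduce to the case $b\in C_c^\infty(\bbR^n)$ via the density of $C_c^\infty$ in $\cmo$, and then verify the weighted Fr\'echet--Kolmogorov compactness criterion on the image of the unit ball. The reduction rests on the kernel identity
\[
(T_{K_{\al}})_b^m f(x) = \int_{\bbR^n}(b(x)-b(y))^m K_{\al}(x,y) f(y)\,dy,
\]
obtained by induction on $m$, which exhibits $(T_{K_{\al}})_b^m$ as the symmetric multilinear commutator in $m$ copies of $b$. Combined with the $L^p(\om^p)\to L^q(\om^q)$ boundedness of such $m$-fold commutators of $\r$-type operators with BMO symbols, with bound of the form $\prod_j\|b_j\|_{\bmo}$, a telescoping expansion in the $m$ symbol slots gives
\[
\|(T_{K_{\al}})_b^m - (T_{K_{\al}})_{b_k}^m\|_{L^p(\om^p)\to L^q(\om^q)} \ls \|b-b_k\|_{\bmo}(\|b\|_{\bmo}+\|b_k\|_{\bmo})^{m-1}.
\]
Choosing $b_k\in C_c^\infty(\bbR^n)$ with $\|b-b_k\|_{\bmo}\to 0$, and using the closedness of compact operators under operator-norm limits, it then suffices to prove compactness when $b\in C_c^\infty(\bbR^n)$.

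For such $b$, with $\mathrm{supp}\,b\subset B(0,R_0)$, I would apply the Fr\'echet--Kolmogorov criterion in $L^q(\om^q)$ to $\mathcal{F}:=\{(T_{K_{\al}})_b^m f : \|f\|_{L^p(\om^p)}\le 1\}$, verifying boundedness, uniform tightness, and equicontinuity. Boundedness is immediate from the first step. For tightness, note that when $|x|>R_0$ the polynomial $(b(x)-b(y))^m$ equals $(-b(y))^m\mathbf{1}_{B(0,R_0)}(y)$, so
\[
|(T_{K_{\al}})_b^m f(x)|\ls \|b\|_\infty^m\, I_\al(|f|\mathbf{1}_{B(0,R_0)})(x).
\]
Combining the decay $I_\al(|f|\mathbf{1}_{B(0,R_0)})(x)\ls |x|^{\al-n}\|f\mathbf{1}_{B(0,R_0)}\|_{L^1}$ valid for $|x|\ge 2R_0$ with H\"older's inequality against the $\apq$ weight yields uniform smallness of $\|(T_{K_{\al}})_b^m f\cdot \mathbf{1}_{|x|>R}\|_{L^q(\om^q)}$ as $R\to\infty$, uniformly over $\mathcal{F}$.

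The remaining, and hardest, property is equicontinuity. The plan is to truncate the kernel: fix a smooth cutoff $\phi$ with $\phi\equiv 1$ on $[0,1/2]$ and $\phi\equiv 0$ on $[1,\infty)$, and set $K_{\al}^{(\varepsilon)}(x,y):=K_{\al}(x,y)(1-\phi(|x-y|/\varepsilon))$ with operator $T^{(\varepsilon)}$. The residual kernel $(b(x)-b(y))^m[K_{\al}-K_{\al}^{(\varepsilon)}](x,y)$ is supported in $\{|x-y|\le\varepsilon\}$ and, since $|b(x)-b(y)|^m\ls \|\nabla b\|_\infty^m |x-y|^m$, pointwise bounded there by $\|\nabla b\|_\infty^m|x-y|^{m-(n-\al)}$; the weighted $\apq$ theory (dominating by a fractional integral of higher order) then gives
\[
\|(T_{K_{\al}})_b^m - (T^{(\varepsilon)})_b^m\|_{L^p(\om^p)\to L^q(\om^q)} \to 0\quad(\varepsilon\to 0).
\]
On the truncated side, the smoothness hypothesis $|K_{\al}(x+h,y)-K_{\al}(x,y)|\ls \r(|h|/|x-y|)|x-y|^{-(n-\al)}$, applied together with $|x-y|\ge\varepsilon/2$ on the support when $|h|<\varepsilon/4$, gives $|K_{\al}^{(\varepsilon)}(x+h,y)-K_{\al}^{(\varepsilon)}(x,y)|\ls (\r(2|h|/\varepsilon)+|h|/\varepsilon)|x-y|^{-(n-\al)}$, and combined with a mean-value estimate for $(b(\cdot)-b(y))^m$ this yields equicontinuity of $(T^{(\varepsilon)})_b^m f$ in $L^q(\om^q)$ uniformly in $\mathcal{F}$ with modulus vanishing as $|h|\to 0$. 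The principal obstacle is precisely the weighted error bound: one must control an operator whose kernel concentrates in a narrow tube around the diagonal in the $L^p(\om^p)\to L^q(\om^q)$ norm, for which a weighted Schur/fractional-type argument respecting the $\apq$ structure is required; moreover, the bookkeeping of $\|b\|_{\bmo}$ and $\|\nabla b\|_\infty$ constants must close up with the smallness from the $\varepsilon$-support, and the borderline regime $m+\al\ge n$ (where the residual kernel becomes integrable rather than fractional) as well as $\al=0$ (purely singular base kernel) require separate treatment within a unified estimate.
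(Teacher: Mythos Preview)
Your strategy coincides with the paper's: reduce to $b\in C_c^\infty$ via the multilinear commutator bound and a telescoping identity, then verify the weighted Fr\'echet--Kolmogorov criterion after a smooth truncation of the kernel near the diagonal. The organizational difference---you truncate only for equicontinuity, the paper truncates first and verifies all three conditions for the truncated operator---is immaterial.

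Two points where the paper's execution resolves the obstacles you flag. First, the residual near-diagonal error: rather than dominating by a fractional integral of order $\al+m$ (which fails for $\al+m\ge n$), the paper uses the Lipschitz bound on $b$ and a dyadic decomposition to write
\[
\int_{|x-y|\le\d}\frac{|f(y)|}{|x-y|^{n-\al-m}}\,dy
\le \sum_{j\ge 0}(2^{-j}\d)^m\int_{2^{-j-1}\d\le|x-y|\le 2^{-j}\d}\frac{|f(y)|}{|x-y|^{n-\al}}\,dy
\ls \d^m M_\al f(x),
\]
and then invokes the $L^p(\om^p)\to L^q(\om^q)$ boundedness of the fractional maximal function $M_\al$ for $\om\in A_{p,q}$. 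This works uniformly in $m$ and $\al\in[0,n)$, including $\al=0$, with no case split. Second, for equicontinuity your pointwise bound $|K_\al^{(\varepsilon)}(x+h,y)-K_\al^{(\varepsilon)}(x,y)|\ls(\r(2|h|/\varepsilon)+|h|/\varepsilon)|x-y|^{-(n-\al)}$ is correct but, after multiplying by the bounded symbol and integrating over $\{|x-y|\ge\varepsilon/2\}$, produces $\int_{|x-y|\ge\varepsilon/2}|x-y|^{-(n-\al)}|f(y)|\,dy$, which is not controlled by $M_\al f$ or $I_\al|f|$ when $\al=0$. The paper instead keeps the scale-dependent factor $\tilde\r(|h|/|x-y|)$ (where $\tilde\r(t)=\r(t)+t$) and sums dyadically to obtain
\[
|I_1(x,h)|\ls\Big(\int_0^{8|h|/\d}\frac{\tilde\r(t)}{t}\,dt\Big)M_\al f(x),
\]
so the Dini condition delivers the required modulus of continuity uniformly over the unit ball. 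With these two adjustments your plan is complete.
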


%The compactness of $[b,T_{0}]$ with $\Om\in Lip_1$
% was first studied by Uchiyama in his remarkable work \cite{Uchiyama78TohokuMathJ}.
%Let $\cmo$ be the closure of $C_c^{\infty}(\mathbb{R}^n)$ in the ${\rm BMO}(\mathbb{R}^n)$ topology.
%He refined the boundedness results of Coifman-Rochberg-Weiss \cite{CoifmanRochbergWeiss76AnnMath} to
%compactness with ${\rm CMO}$ symbol.
%In \cite{Uchiyama78TohokuMathJ}, Uchiyama gave an equivalent characterization of $\cmo$ in terms of the mean oscillation,
%which is the key tool in the characterization of compactness of $[b,T_{\Omega}]$ by $\cmo$.

The rest of the paper is organized as follows. In Section \ref{s2}, we list some basic properties for
weights and local mean oscillations $a_{\la}(b;Q)$ of measurable functions $b$ over cubes $Q$, which are
used in this paper. Section \ref{s3} is devoted to a new characterization of $\cmo$
in terms of the local mean oscillations, which is parallel to the version established by Uchiyama via mean oscillations $\mathcal{O}(f;Q)$
and plays a key role in the proof of Theorem \ref{theorem, necessity}.
The local mean oscillation shows some advantages when one considers lower bound of pointwise regularity of BMO functions,
see also Proposition 3.1 in \cite{LernerOmbrosiRivera17arxiv} or Proposition \ref{proposition, key lower estimates} below.

Section \ref{s4} is devoted to the proof of Theorem \ref{theorem, necessity} and is divided into three subsections.
In Subsection \ref{s4.1}, for any given cube $Q$ and  real-valued measurable function $b$, we construct a function $f$ closely related to $Q$, and
obtain a lower bound of  the weighted $L^q$
norm of $(T_{\Omega,\,\al})_b^m(f)$ over certain subset of $Q$ in terms of $a_{\la}(b;Q)^m$. In Subsection \ref{s4.2},
for $b\in\bmo$ and any cube $Q$, we also obtain an upper bound of  the weighted $L^q$
norm of $(T_{\Omega,\,\al})_b^m(f)$ over the annulus $2^{d+1}Q\bs 2^dQ$ in terms of $2^{-\d dn/p}d^m$, where
$d\in\mathbb N$ large enough, $\delta$ is a positive constant depending on $w\in\apq$ and $f$ is aforementioned.
Using these upper and lower bounds and the new characterization of
$\cmo$ obtained in Section \ref{s3}, we further present the proof of Theorem \ref{theorem, necessity} via a contradiction argument in Subsection \ref{s4.3}.
We remark that our Theorem \ref{theorem, necessity} for $m\geq 2$ is new even
for the $Lip_1$ case in \cite{Uchiyama78TohokuMathJ}.

In Section \ref{s5}, we give the proof of Theorem \ref{theorem, sufficiency}.
For the converse direction, Theorem \ref{theorem, sufficiency} give the new compactness results for iterated commutators.
Thanks to the recent work by Lacey \cite{Lacey17IsraelJMath} and Lerner \cite{Lerner16NewYorkJMath}, our compactness result can be
established in the structure of $\r$-type operators. Note that Theorem \ref{theorem, sufficiency} is new for $m\geq 2$ in all cases.

We would like to point out a class of kernels satisfying both assumption of Theorem \ref{theorem, necessity} and \ref{theorem, sufficiency}.
For the operator $T_{\Omega,\,\al}$, assume that $\Om\in C(\bbS^{n-1})$ is not identically zero. Denote
\be
\r(\d)=\sup_{|x'-y'|\leq \d}|\Om(x')-\Om(y')|.
\ee
If $\r$ satisfy the Dini condition mentioned above,
$T_{\Omega,\,\al}$ satisfies both assumptions in Theorem \ref{theorem, necessity} and \ref{theorem, sufficiency}.

It would be helpful to clarify that in this paper, the kernel $K_{\al}$ and $\Om$ is assumed to be real-valued.
For $m\geq 2$, we only consider the real-valued symbol $b$. This restriction is actually implied in all the previous results of this topic.
Here, we emphasize this to avoid possible misunderstanding.

Finally, we make some conventions on notation. Throughout the paper,  for a real number $a$, $\lfloor a\rfloor$ means the biggest integer no more than $a$.
By $C$ we denote  a {positive constant} which
is independent of the main parameters, but it may vary from line to
line. Constants with subscripts do
not change in different occurrences. Moreover, the symbol $f\lesssim g$ represents that $f\leq Cg$ for some
positive constant $C$. If $f\lesssim g$ and $g\lesssim f$,
we then write $f\sim g$. For a given cube $Q$, we use $c_Q$, $l_Q$ and $\chi_Q$ to denote the center, side length and  characteristic function of $Q$, respectively.

%$\mathscr{D}$ is the space of indefinitely differential functions with compact support.
\section{Preliminaries}\label{s2}

\subsection{Some basic properties for non-increasing rearrangements}
Given a measurable function $f$, the non-increasing rearrangement is defined by
\be
f^*(t):=\inf\{\alpha>0: |\{x\in \mathbb{R}^n: |f(x)|>\alpha\}|<t\},\ \ t\in (0,\infty).
\ee
Here, we list some basic properties of non-increasing rearrangement which will be used frequently in the remainder of this paper.
\begin{lemma}\label{l-rearran propt}
\begin{enumerate}[(A)]
\item $f^*(t)$ is left continuous for $t\in (0,\infty)$.
\item $(f+c)*(t)\leq f^*(t)+|c|$,\  $c\in \mathbb{C}$.
\item $(f+g)*(t_1+t_2)\leq f^*(t_1)+g^*(t_2)$,\ $t_1,t_2\in (0,\infty)$.
\item $(f+g)*(t_1+t_2)\leq \max\{f^*(t_1),g^*(t_2)\}$, where $|\text{supp}f\cap \text{supp}g|=0$,  $t_1,t_2\in (0,\infty)$.
\end{enumerate}
\end{lemma}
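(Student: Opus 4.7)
The plan is to derive all four properties from the standard distribution-function framework, recalling that $f^*(t)=\inf\{\alpha>0: d_f(\alpha)<t\}$, where $d_f(\alpha):=|\{x\in\mathbb{R}^n:|f(x)|>\alpha\}|$. The distribution function $d_f$ is non-increasing and right-continuous in $\alpha$, and these two features are really all that is needed.

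For (A), I would first note that $f^*$ is itself non-increasing by definition, so $\lim_{s\to t^-}f^*(s)\geq f^*(t)$. For the reverse inequality, I argue by contradiction: suppose there is $\alpha$ with $\lim_{s\to t^-}f^*(s)>\alpha>f^*(t)$. Since $\alpha>f^*(t)$ forces $d_f(\alpha)<t$, any $s\in(d_f(\alpha),t)$ satisfies $f^*(s)\leq \alpha$ by the definition of infimum, contradicting the assumed limit. For (B), the key set inclusion is $\{|f+c|>\alpha+|c|\}\subset\{|f|>\alpha\}$, which gives $d_{f+c}(\alpha+|c|)\leq d_f(\alpha)$; feeding this into the infimum definition yields $(f+c)^*(t)\leq f^*(t)+|c|$.

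For (C), the central observation is the triangle-inequality-based inclusion
\[
\{|f+g|>\alpha+\beta\}\subset\{|f|>\alpha\}\cup\{|g|>\beta\},
\]
so $d_{f+g}(\alpha+\beta)\leq d_f(\alpha)+d_g(\beta)$. Taking $\alpha>f^*(t_1)$ and $\beta>g^*(t_2)$ makes the right side strictly less than $t_1+t_2$, hence $(f+g)^*(t_1+t_2)\leq\alpha+\beta$; passing to the infimum over such $\alpha,\beta$ gives the claim. For (D), essential disjointness of supports means $|f+g|=|f|+|g|$ with at most one term nonzero a.e., so
\[
\{|f+g|>\alpha\}=\{|f|>\alpha\}\cup\{|g|>\alpha\}
\]
up to a null set. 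Then for $\alpha>\max\{f^*(t_1),g^*(t_2)\}$ we get $d_{f+g}(\alpha)\leq d_f(\alpha)+d_g(\alpha)<t_1+t_2$, and the infimum yields $(f+g)^*(t_1+t_2)\leq\max\{f^*(t_1),g^*(t_2)\}$.

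None of the four assertions presents a genuine obstacle; the only subtle point is (A), where one must be careful to use strict vs. non-strict inequalities correctly so that the infimum definition really delivers the approximating values $f^*(s)\leq\alpha$ on the punctured left neighborhood of $t$. All four items are classical and can each be dispatched in a few lines once the distribution-function characterization is in place.
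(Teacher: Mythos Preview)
Your proposal is correct and aligns with the paper's approach: the paper omits (A)--(C) as ``obvious'' and proves only (D), using precisely your disjoint-support decomposition and level-set estimate, parametrized via $\max\{f^*(t_1),g^*(t_2)\}+\delta$ with $\delta\to 0$ rather than a generic $\alpha>\max\{f^*(t_1),g^*(t_2)\}$ with passage to the infimum. The two formulations are equivalent, so there is no substantive difference.
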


\begin{proof}
%Property (A) depends on the definition of $f^*$. In some other literature such as \cite{Grafakos08},
%one can find the right-continuous rearrangement.
%Property (B) and (C) can be verified without difficulty.
The proofs of (A), (B) and (C) are obvious and we omit the details; see also \cite{Grafakos08}.
%We only give a proof of (D) as following.

To show (D), by the definition of $f^*(t_1)$ and $g^*(t_2)$, for any fixed $\delta>0$, we first have
  \be
  |\{x\in \rn: |f(x)|>f^*(t_1)+\delta\}|<t_1,\ |\{x\in \rn: |g(x)|>g^*(t_2)+\delta\}|<t_2.
  \ee
  From this and the assumption $|\text{supp}f\cap \text{supp}g|=0$,
  \be
  \begin{split}
  &|\{x\in \rn: |f(x)+g(x)|>\max\{f^*(t_1),g^*(t_2)\}+\delta\}|
  \\
 &\quad \leq
  |\{x\in \rn: |f(x)|>f^*(t_1)+\delta\}|+|\{x\in \rn: |g(x)|>g^*(t_2)+\delta\}|<t_1+t_2.
  \end{split}
  \ee
  Thus,
  \be
  (f+g)*(t_1+t_2)\leq \max\{f^*(t_1),g^*(t_2)\}+\delta.
  \ee
  This completes the proof by letting $\delta\rightarrow 0$.
  \end{proof}

We remark that the property (D) in Lemma \ref{l-rearran propt} has its advantage when dealing with a sequence of functions with disjoint supports
(See, for example, the proof of Theorem \ref{theorem, new characterization of CMO} below).

\subsection{Two approaches to $\bmo$}
For any function $f\in L^1_{loc}(\mathbb R^n)$ and cube $Q$, define
\be
\widetilde{\mathcal{O}}(f,Q):=\inf_{c\in \mathbb{C}}\frac{1}{|Q|}\int_Q |f(x)-c|dx.
\ee
It is easy to see that
\be
\widetilde{\mathcal{O}}(f,Q) \leq \mathcal O(f; Q) \leq 2\widetilde{\mathcal O}(f; Q).
\ee

We next consider the characterization of $\bmo$ via local mean oscillations.

\begin{definition}
  By a median value of a real-valued measurable function $f$ over $Q$ we mean a possibly nonunique, real number $m_f(Q)$ such that
\be
|\{x\in Q: f(x)>m_f(Q)\}|\leq |Q|/2
\ee
and
\be
|\{x\in Q: f(x)<m_f(Q)\}|\leq |Q|/2.
\ee
\end{definition}

\begin{definition}
For a complex-valued measurable function $f$, we define the local mean oscillation of $f$ over a cube $Q$ by
\be
\overline{a}_{\lambda}(f;Q):=\inf_{c\in \mathbb{C}}((f-c)\chi_Q)^*(\lambda|Q|)\hspace{6mm}(0<\lambda<1).
\ee
\end{definition}
\begin{definition}
For a real-valued measurable function $f$, we define the local mean oscillation of $f$ over a cube $Q$ by
\be
a_{\lambda}(f;Q):=((f-m_f(Q))\chi_Q)^*(\lambda|Q|)\hspace{6mm}(0<\lambda<1).
\ee
We also define
\be
\widetilde{a}_{\lambda}(f;Q):=\inf_{c\in \mathbb{R}}((f-c)\chi_Q)^*(\lambda|Q|)\hspace{6mm}(0<\lambda<1).
\ee
\end{definition}
%\begin{remark}
%Using the fact $|Re(f)|,|Im(f)|\leq |f|$, one can verify that
%\be
%\overline{a}_{\lambda}(Re(f);Q)=\widetilde{a}_{\lambda}(Re(f);Q)\leq \overline{a}_{\lambda}(f;Q),\
%\overline{a}_{\lambda}(Im(f);Q)=\widetilde{a}_{\lambda}(Im(f);Q)\leq \overline{a}_{\lambda}(f;Q).
%\ee
%\end{remark}

The following lemma on the property of the local mean oscillations is useful in Section \ref{s3}; see \cite{John65,Stromberg79IUMJ,Lerner11Lecture}.
We give the proof for completeness.

\begin{lemma}\label{l-local mean osci equiv}
Let $\lambda\in(0, \frac12]$. The following statements are true.
\begin{enumerate}
  \item [{\rm (i)}] For any real-valued function $f$ and cube $Q$,
  \be
\widetilde{a}_{\lambda}(f;Q)\leq a_{\lambda}(f;Q)\leq 2\widetilde{a}_{\lambda}(f;Q).
\ee
  \item [{\rm (ii)}] For any complex-valued function $f$, denote
\be
\|f\|_{\bmoz}:=\sup_Q\overline{a}_{\lambda}(f;Q).
\ee
Then we have
\be
\|f\|_{\bmoz}\sim \|f\|_{\bmo}\sim \sup_Q\widetilde{a}_{\lambda}(Re(f);Q)+\sup_Q\widetilde{a}_{\lambda}(Im(f);Q).
\ee
\end{enumerate}

\end{lemma}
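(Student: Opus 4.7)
The plan for (i) is direct. The first inequality $\widetilde{a}_\lambda(f;Q)\le a_\lambda(f;Q)$ is immediate, since $m_f(Q)$ is admissible in the infimum defining $\widetilde a_\lambda$. For the second, I would fix any $c\in\mathbb R$ and any $\alpha>((f-c)\chi_Q)^*(\lambda|Q|)$. Then $|\{x\in Q:|f-c|>\alpha\}|<\lambda|Q|\le|Q|/2$, so the complementary set has measure exceeding $|Q|/2$. Both median inequalities then force $m_f(Q)\in[c-\alpha,c+\alpha]$: if, for instance, $m_f(Q)>c+\alpha$ held, then $\{f\le c+\alpha\}\cap Q\subseteq\{f<m_f(Q)\}\cap Q$ would have measure exceeding $|Q|/2$, contradicting the median property. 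Combining the pointwise bound $|(f-m_f(Q))\chi_Q|\le|(f-c)\chi_Q|+|m_f(Q)-c|\chi_Q$ with Lemma~\ref{l-rearran propt}(C) (giving a vanishing share of mass to the constant term) yields $a_\lambda(f;Q)\le((f-c)\chi_Q)^*(\lambda|Q|)+\alpha$; letting $\alpha$ decrease and taking the infimum over $c$ gives $a_\lambda(f;Q)\le 2\widetilde a_\lambda(f;Q)$.

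For (ii), I would first show $\|f\|_{\bmoz}\lesssim\|f\|_{\bmo}$ by a Chebyshev estimate: for any $c\in\mathbb C$, the set where $|f-c|>\lambda^{-1}|Q|^{-1}\int_Q|f-c|\,dx$ has measure less than $\lambda|Q|$, so $\overline a_\lambda(f;Q)\le\lambda^{-1}\widetilde{\mathcal O}(f;Q)\le\lambda^{-1}\mathcal O(f;Q)$. The reverse estimate $\|f\|_{\bmo}\lesssim\|f\|_{\bmoz}$ is the main obstacle, and I would handle it by a Str\"omberg-type Calder\'on--Zygmund iteration: given $Q_0$, pick $c_{Q_0}$ almost realizing $\overline a_\lambda(f;Q_0)$, perform a dyadic descent inside $Q_0$, and stop at the first dyadic subcubes $Q$ where the realizing constant must jump. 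Since the bad set $\{|f-c_Q|>2\|f\|_{\bmoz}\}\cap Q$ covers only a $\lambda$-fraction of $|Q|$ and parent--child jumps $|c_Q-c_{Q'}|$ are $O(\|f\|_{\bmoz})$, iteration produces John--Nirenberg-type exponential decay $|\{x\in Q_0:|f-c_{Q_0}|>t\}|\lesssim e^{-\kappa t/\|f\|_{\bmoz}}|Q_0|$, and integration in $t$ gives $\widetilde{\mathcal O}(f;Q_0)\lesssim\|f\|_{\bmoz}$.

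The third equivalence in (ii) follows by splitting $c=a+ib$. The bound $\widetilde a_\lambda(Re(f);Q)\le\overline a_\lambda(f;Q)$ (and similarly for $Im(f)$) is immediate from $|Re(f)-a|\le|f-c|$ pointwise; for the converse, $|f-c|\le|Re(f)-a|+|Im(f)-b|$ combined with Lemma~\ref{l-rearran propt}(C) taken with $t_1=t_2=\lambda|Q|/2$ yields $\overline a_\lambda(f;Q)\le\widetilde a_{\lambda/2}(Re(f);Q)+\widetilde a_{\lambda/2}(Im(f);Q)$. Applying the already established $\|\cdot\|_{\bmoz}\sim\|\cdot\|_{\bmo}$ equivalence to the real-valued functions $Re(f)$ and $Im(f)$, which via part (i) implies $\sup_Q\widetilde a_{\lambda_1}\sim\sup_Q\widetilde a_{\lambda_2}$ for all $\lambda_1,\lambda_2\in(0,1/2]$, closes the loop.
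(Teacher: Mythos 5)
Your part (i) is correct and is in substance the paper's own argument: the paper bounds $|m_f(Q)-c|\leq ((f-c)\chi_Q)^*(|Q|/2)\leq ((f-c)\chi_Q)^*(\lambda|Q|)$ by applying the median definition to $f-c$, while you bound $|m_f(Q)-c|\leq\alpha$ directly from the measure estimate; these are the same idea. (Minor point: the natural tool for absorbing the constant is property (B) of Lemma \ref{l-rearran propt}, not (C), but your workaround is harmless.) The easy direction of (ii) via Chebyshev and the $Re/Im$ splitting also match the paper; your detour through $\widetilde{a}_{\lambda/2}$ is fine since both parameters lie in $(0,1/2]$.

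The one substantive divergence is the hard inequality $\|f\|_{\bmo}\lesssim\|f\|_{\bmoz}$. The paper does not prove it: it cites the John--Str\"omberg theorem. You propose to prove it by a dyadic stopping-time iteration, and as sketched this only closes for small $\lambda$. In the iteration, the maximal dyadic subcubes selected over the bad set of measure at most $\lambda|Q|$ have total measure up to roughly $2^n\lambda|Q|$ (or $\theta^{-1}\lambda|Q|$ for a selection threshold $\theta$ that must itself be comparable to $\lambda$), so the level sets contract by a fixed factor only when $\lambda$ is small compared to $2^{-n}$; for $\lambda$ near $1/2$ and $n\geq 2$ there is no contraction and the exponential decay does not follow. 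Obtaining the full range $\lambda\in(0,1/2]$ --- which the lemma asserts and which the paper actually uses later (in Theorem \ref{theorem, new characterization of CMO} the parameter $\widetilde{\lambda}=(2\lambda+1)/4$ can be close to $1/2$) --- is precisely the content of Str\"omberg's theorem and requires a more delicate relative distributional inequality rather than a plain Calder\'on--Zygmund descent. So either cite John \cite{John65} and Str\"omberg \cite{Stromberg79IUMJ} as the paper does, or restrict the sketched argument to small $\lambda$ and derive the general case from the small-$\lambda$ case via the BMO equivalence.
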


\begin{proof}
To see (i), by the definition of median number, for any real-valued measurable, we have
\be
|\{x\in Q: |f(x)|\geq |m_f(Q)|\}|\geq |Q|/2,
\ee
which implies for suitable $m_f(Q)$,
\be
|m_f(Q)|\leq (f\chi_Q)^*(|Q|/2).
\ee
For a fixed real number $c$, observe that $m_f(Q)-c$ is also a median value of $f-c$.
Replacing $f$ by $f-c$ in the above inequality, we get
\ben
|m_f(Q)-c|\leq ((f-c)\chi_Q)^*(|Q|/2).
\een
Thus, for any $\lambda\in(0, \frac12]$ and $c\in \bbR$,
\be
\begin{split}
((f-m_f(Q))\chi_Q)^*(\lambda|Q|)
\leq &
((f-c)\chi_Q)^*(\lambda|Q|)+|m_f(Q)-c|
\\
\leq &
((f-c)\chi_Q)^*(\lambda|Q|)+((f-c)\chi_Q)^*(|Q|/2)
\\
\leq &
2((f-c)\chi_Q)^*(\lambda|Q|).
\end{split}
\ee
Then (i) follows from the above inequality immediately.%that for $\lambda\in (0,1/2]$,
%\be
%\widetilde{a}_{\lambda}(f;Q)\leq \omega_{\lambda}(f;Q)\leq 2\widetilde{a}_{\lambda}(f;Q).
%\ee

We now show (ii). A direct conclusion by Chebyshev's inequality shows
\be
((f-c)\chi_Q)^*(\lambda|Q|)\leq \frac{1}{\lambda}\frac{1}{|Q|}\int_Q|f-c|dx,\ c\in \mathbb{C},
\ee
which implies that for all complex-valued measurable function $f$ and $c\in \mathbb{C}$,
\be
\overline{a}_{\lambda}(f;Q)\leq\frac{1}{\lambda}\widetilde{\mathcal O}(f;Q).
\ee
Thus,
\ben\label{e-J-S low bdd}
\sup_Q\overline{a}_{\lambda}(f;Q)\leq \frac1\lambda\|f\|_{\bmo}.
\een
The remarkable results of John \cite{John65} and Str\"{o}mberg \cite{Stromberg79IUMJ}
show that the converse direction of the above inequality is still true for $\lambda\in (0,1/2]$,
that is,
\begin{equation}\label{e-J-S upp bdd}
\|f\|_{\bmo}\lesssim \sup_Q\overline{a}_{\lambda}(f;Q).
\end{equation}
Combining \eqref{e-J-S low bdd} and \eqref{e-J-S upp bdd},  we  have the John-Str\"{o}mberg equivalence
\ben\label{e-J-S equiv}
\|f\|_{\bmo}\sim \|f\|_{\bmoz}.
\een
Moreover, observe that $\overline{a}_{\lambda}(Re(f);Q)=\widetilde{a}_{\lambda}(Re(f);Q)$
and $\overline{a}_{\lambda}(Im(f);Q)=\widetilde{a}_{\lambda}(Im(f);Q)$. By Remark \ref{r-bmo norm equiv re-im},
We then conclude that
\begin{align*}
\|f\|_{\bmoz}\sim \|f\|_{\bmo}&\sim \|Re(f)\|_{\bmo}+\|Im(f)\|_{\bmo}\\
&\sim\sup_Q\widetilde{a}_{\lambda}(Re(f);Q)+\sup_Q\widetilde{a}_{\lambda}(Im(f);Q).
\end{align*}
%
%
%The above inequality and the facts $\overline{a}_{\lambda}(Re(f);Q)=\widetilde{a}_{\lambda}(Re(f);Q)$
%and $\overline{a}_{\lambda}(Im(f);Q)=\widetilde{a}_{\lambda}(Im(f);Q)$ then imply that
%\be
%\|Re(f)\|_{\bmo}\lesssim \sup_Q\widetilde{a}_{\lambda}(Re(f);Q),\
%\|Im(f)\|_{\bmo}\lesssim \sup_Q\widetilde{a}_{\lambda}(Im(f);Q).
%\ee
%The combination with the converse inequality yields that
%\be
%\|Re(f)\|_{\bmo}\sim \sup_Q\widetilde{a}_{\lambda}(Re(f);Q),\
%\|Im(f)\|_{\bmo}\sim \sup_Q\widetilde{a}_{\lambda}(Im(f);Q).
%\ee
%
%
%
%Furthermore, we have
%\be
%\|f\|_{\bmo}\sim \|Re(f)\|_{\bmo}+\|Im(f)\|_{\bmo}
%\sim
%\sup_Q\widetilde{a}_{\lambda}(Re(f);Q)+\sup_Q\widetilde{a}_{\lambda}(Im(f);Q),
%\ee
%and
%\be
%\|f\|_{\bmoz}=\sup_Q\overline{a}_{\lambda}(f;Q)\sim \sup_Q\widetilde{a}_{\lambda}(Re(f);Q)+\sup_Q\widetilde{a}_{\lambda}(Im(f);Q).
%\ee
Thus, the proof of (ii) is completed.
\end{proof}

\subsection{$A_p$ and $A_{p,q}$ weights}
In this subsection, we recall some useful properties of $A_p$ and $\apq$ weights; see \cite{MuckenhouptWheeden74TAMS,Grafakos08,HytonenPerez13AnalPDE,HolmesRahmSpencer16StudiaMath}.
Define the $A_{\infty}$ class of weights by $A_{\infty}:=\cup_{p>1}A_p$, and recall the Fujii-Wilson $A_{\infty}$ constant
\be
[\om]_{A_{\infty}}:= \sup_Q\frac{1}{\om(Q)}\int_Q M(\chi_Q\om)\,dx,
\ee
where $M$ is the Hardy-Littlewood maximal operator.
\begin{lemma}\label{l-Ap weight prop}
Let $p\in(1, \infty)$ and $w\in A_p$.
\begin{enumerate}
  \item [{\rm(i)}] For every $0<\al<1$, there exists $0<\b<1$ such that for every $Q$ and every measurable set $E\subset Q$
with $|E|\geq \al |Q|$, then
\be
\om(E)\geq \b\om(Q).
\ee
  \item [{\rm (ii)}]For all $\la>1$, and all cubes $Q$,
\be
\om(\la Q)\leq \la^{np}[\om]_{A_p}\om(Q).
\ee
  \item [{\rm (iii)}]  $[\om]_{A_{\infty}}\leq c_n[\om]_{A_p}$.

  \item[{\rm (iv)}]There exist a constant $\ep_n$ only depend on $n$ such that
 if $0<\ep\leq \ep_n/[\om]_{A_\infty}$, then $\om$ satisfies the reverse H\"{o}lder  inequality for any cube $Q$,
\be
\left(\frac{1}{|Q|}\int_{Q}\om(x)^{1+\ep}dx\right)^{\frac{1}{1+\ep}}\leq \frac{2}{|Q|}\int_{Q}\om(x)dx.
\ee

\item[{\rm(v)}] There exists a small positive constant $\ep$ depend only on $n$, $p$ and $[\om]_{A_p}$ such that
\be
\om^{1+\ep}\in A_p,\ \ \  \om\in A_{p-\ep}.
\ee

\end{enumerate}
\end{lemma}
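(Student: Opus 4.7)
The plan is to handle parts (i) and (ii) directly from Hölder's inequality and the defining $A_p$ condition, while parts (iii), (iv), and (v) are quantitative statements for which I would invoke results from \cite{HytonenPerez13AnalPDE}.

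For (i), I would apply Hölder's inequality to the decomposition $\chi_E=\chi_E\,\omega^{1/p}\cdot\omega^{-1/p}$ over $Q$ to obtain
$$|E|^p\leq\omega(E)\left(\int_Q\omega^{1-p'}\right)^{p-1},$$
and then use the $A_p$ constant to bound $\left(\int_Q\omega^{1-p'}\right)^{p-1}$ by a multiple of $|Q|^p/\omega(Q)$. Rearranging yields $\omega(E)/\omega(Q)\geq[\omega]_{A_p}^{-1}(|E|/|Q|)^p$, so one may take $\beta=\alpha^p/[\omega]_{A_p}$. For (ii), the same Hölder device applied on $Q$ produces $|Q|^p\leq\omega(Q)\bigl(\int_Q\omega^{1-p'}\bigr)^{p-1}$; coupling this with the $A_p$ condition on $\lambda Q$ and the monotonicity $\int_Q\omega^{1-p'}\leq\int_{\lambda Q}\omega^{1-p'}$ gives $\omega(\lambda Q)\leq[\omega]_{A_p}\lambda^{np}\omega(Q)$ in a single rearrangement.

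For (iii), the comparison $[\omega]_{A_\infty}\leq c_n[\omega]_{A_p}$ is the sharp bound of Hytönen--Pérez which I would simply quote. For (iv), the sharp reverse Hölder inequality with threshold $\epsilon\lesssim 1/[\omega]_{A_\infty}$ and multiplicative constant $2$ is also from Hytönen--Pérez; its proof relies on a Calderón--Zygmund stopping-time decomposition tied precisely to the Fujii--Wilson formulation of $[\omega]_{A_\infty}$. Part (v) then falls out from (iv) by soft manipulation: applying the reverse Hölder estimate to both $\omega$ and the dual weight $\omega^{1-p'}$ (both of which lie in $A_\infty$), choosing $\epsilon$ sufficiently small, and rearranging the exponents so that $p$-duality is preserved, one checks the $A_p$ condition for $\omega^{1+\epsilon}$ as well as the $A_{p-\epsilon}$ condition for $\omega$.

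The main obstacle, if one were to reprove everything from first principles, is the sharp reverse Hölder inequality of (iv): the subtlety is that the admissible exponent is controlled by $[\omega]_{A_\infty}$ (which can be strictly smaller than $[\omega]_{A_p}$), and securing this sharp dependence demands a careful stopping-time argument engineered around the Fujii--Wilson definition rather than the more classical John--Nirenberg-style self-improvement. Once (iv) is in hand, (v) is algebraic, (iii) is precisely the bridge that makes (iv) applicable under the $A_p$ hypothesis, and (i)--(ii) reduce to brief applications of Hölder's inequality and the $A_p$ condition.
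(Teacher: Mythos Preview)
Your proposal is correct. In fact the paper does not prove this lemma at all: the subsection opens with ``we recall some useful properties of $A_p$ and $A_{p,q}$ weights; see \cite{MuckenhouptWheeden74TAMS,Grafakos08,HytonenPerez13AnalPDE,HolmesRahmSpencer16StudiaMath}'' and then simply states Lemma~\ref{l-Ap weight prop} without argument, so your sketch---direct H\"older arguments for (i) and (ii), citation of Hyt\"onen--P\'erez for (iii) and (iv), and the standard reverse-H\"older-to-self-improvement deduction for (v)---already goes beyond what the authors supply and is entirely in line with the references they invoke.
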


\begin{lemma}\label{l-Apq weigh prop}
Let $1<p,q<\infty$, $1/q=1/p-\al/n$ with $0<\al<n$ and $w\in \apq$.
\begin{itemize}
  \item [{\rm(i)}] $\om^p\in\apn$, $\om^q\in A_q$ and  $\om^{-p'}\in A_{p'}$
  \item [{\rm (ii)}]
$
\om\in \apq\Longleftrightarrow \om^q\in A_{q\frac{n-\al}{n}} \Longleftrightarrow \om^q\in A_{1+\frac{q}{p'}}
\Longleftrightarrow \om^{-p'}\in A_{1+\frac{p'}{q}}.
$
\end{itemize}

\end{lemma}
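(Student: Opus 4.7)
The plan is to prove (ii) first, since the three equivalences are purely algebraic manipulations of the defining averages, and then deduce (i) quickly from (ii) using the monotonicity of the Muckenhoupt scale.

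For (ii), I would begin by raising the defining inequality for $\om\in\apq$ to the $q$-th power to obtain the equivalent form
\be
\sup_Q \left(\frac{1}{|Q|}\int_Q \om^q\right)\left(\frac{1}{|Q|}\int_Q \om^{-p'}\right)^{q/p'}<\infty.
\ee
Comparing this with the standard $A_r$ characterization of $\om^q$, which requires $\bigl(\frac{1}{|Q|}\int_Q\om^q\bigr)\bigl(\frac{1}{|Q|}\int_Q\om^{-q/(r-1)}\bigr)^{r-1}$ to be bounded, I match exponents by setting $r-1=q/p'$. A short computation using $1/q=1/p-\al/n$ shows
\be
1+\frac{q}{p'}=1+q\Bigl(1-\frac{1}{p}\Bigr)=q-\frac{q\al}{n}=\frac{q(n-\al)}{n},
\ee
which yields the first two equivalences in (ii). The third equivalence is the familiar self-duality of Muckenhoupt classes: $v\in A_r$ iff $v^{-1/(r-1)}\in A_{r'}$ with $r'=r/(r-1)$. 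Applying this with $v=\om^q$ and $r=1+q/p'$ gives $v^{-1/(r-1)}=\om^{-p'}$ and $r'=1+p'/q$, which is exactly the remaining equivalence.

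For (i), the inclusions $\om^q\in A_q$ and $\om^{-p'}\in A_{p'}$ follow from (ii) via the monotonicity $A_{r_1}\subset A_{r_2}$ for $r_1\leq r_2$. Since $\al>0$, we have $q(n-\al)/n<q$, hence $\om^q\in A_{q(n-\al)/n}\subset A_q$. Similarly, $q>p$ forces $1+p'/q<1+p'/p=p'$, hence $\om^{-p'}\in A_{1+p'/q}\subset A_{p'}$. For $\om^p\in A_p$, I would raise the $\apq$ condition to the $p$-th power instead; the exponent on the $\om^{-p'}$ factor then becomes $p/p'=p-1$, which is precisely the exponent demanded by the $A_p$ condition for $\om^p$. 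The remaining factor $\bigl(\frac{1}{|Q|}\int_Q\om^q\bigr)^{p/q}$ dominates $\frac{1}{|Q|}\int_Q\om^p$ by Jensen's inequality (since $p\leq q$), which closes the argument.

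There is no real obstacle here: the entire proof reduces to bookkeeping with exponents arising from the relation $1/q=1/p-\al/n$. The only point requiring care is the identity $1+q/p'=q(n-\al)/n$, which hinges on this homogeneity; the remaining manipulations are either standard $A_r$ duality or an application of Jensen's inequality.
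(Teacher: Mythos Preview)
Your argument is correct. The exponent bookkeeping for (ii) is accurate, including the identity $1+q/p'=q(n-\al)/n$ and the use of the standard $A_r$ duality $v\in A_r\Leftrightarrow v^{1-r'}\in A_{r'}$; and for (i) the monotonicity $A_{r_1}\subset A_{r_2}$ together with Jensen's inequality in the form $\bigl(\frac{1}{|Q|}\int_Q\om^p\bigr)^{q/p}\le \frac{1}{|Q|}\int_Q\om^q$ (valid since $q\ge p$) closes the last case $\om^p\in A_p$.

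As to comparison: the paper does not supply its own proof of this lemma. It is stated as a recollection of well-known weight facts, with citations to \cite{MuckenhouptWheeden74TAMS,Grafakos08,HytonenPerez13AnalPDE,HolmesRahmSpencer16StudiaMath}, so there is nothing to compare your approach against beyond the standard arguments found in those references, which are exactly the kind of direct exponent manipulations you have written out.
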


\section{Characterization of $\cmo$ in terms of local mean oscillations}\label{s3}

In this section, we establish a new characterization of $\cmo$ via the local mean oscillation,
which is a key tool in the proof of Theorem \ref{theorem, necessity} and of interest on its own.
  We begin with recalling the classical characterization of $\cmo$ obtained by Uchiyama \cite{Uchiyama78TohokuMathJ} in terms of mean oscillation.

\begin{lemma}[\cite{Uchiyama78TohokuMathJ}]\label{lemma, old characterization of CMO}
  Let $f\in \bmo$. Then $f\in \cmo$ if and only if the following three conditions hold:
  \bn
  \item $\lim_{r\rightarrow 0}\sup\limits_{|Q|=r}{\mathcal{O}}(f;Q)=0$,
  \item $\lim_{r\rightarrow \infty}\sup\limits_{|Q|=r}{\mathcal{O}}(f;Q)=0$,
  \item $\lim_{x\rightarrow \infty}\mathcal O(f;Q+x)=0$.
  \en
\end{lemma}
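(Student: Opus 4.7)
The plan is to prove the ``if'' direction by a soft approximation argument and the ``only if'' direction by constructing explicit $C_c^\infty$ approximants via truncation and mollification.

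For ``if'', I would first verify (1)--(3) directly for an arbitrary $\varphi\in C_c^\infty(\mathbb R^n)$: (1) follows from uniform continuity, since $\mathcal O(\varphi;Q)\lesssim \|\nabla\varphi\|_\infty l_Q$; (2) follows from the integrability bound $\mathcal O(\varphi;Q)\leq 2\|\varphi\|_{L^1}/|Q|$, which vanishes as $|Q|\to\infty$; and (3) is trivial once $|x|$ is so large that $(Q+x)\cap\mathrm{supp}\,\varphi=\emptyset$. For a general $f\in\cmo$ and $\epsilon>0$, choose $\varphi\in C_c^\infty$ with $\|f-\varphi\|_\bmo<\epsilon$. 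The subadditive estimate $\mathcal O(f;Q)\leq \mathcal O(\varphi;Q)+\|f-\varphi\|_\bmo$ then transports each of (1)--(3) from $\varphi$ to $f$ up to an error $\epsilon$; letting $\epsilon\to 0$ completes this direction.

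For ``only if'', given $f\in\bmo$ satisfying (1)--(3), I would produce $g_{R,\delta}\in C_c^\infty$ with $\|f-g_{R,\delta}\|_\bmo\to 0$ along an appropriate diagonal sequence $R\to\infty$, $\delta\to 0$. Fix $\eta\in C_c^\infty$ with $\eta\equiv 1$ on $B(0,1)$ and $\mathrm{supp}\,\eta\subset B(0,2)$; set $\eta_R(x):=\eta(x/R)$, $c_R:=f_{B(0,R)}$, $f_R:=(f-c_R)\eta_R$, and $g_{R,\delta}:=f_R*\psi_\delta$ for a standard mollifier $\psi_\delta$. The argument splits into: (a) $\|f-f_R\|_\bmo\to 0$ as $R\to\infty$, and (b) $\|f_R-f_R*\psi_\delta\|_\bmo\to 0$ as $\delta\to 0$ for each fixed $R$. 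Step (b) is the easier one, relying on condition (1) (inherited by $f_R$ from $f$) together with $L^1$-continuity of translation.

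Step (a) is the main obstacle. Writing $f-f_R=(f-c_R)(1-\eta_R)+c_R$ (the additive constant drops out of $\mathcal O$), I would analyze cases on the size and location of $Q$: if $Q\subset B(0,R/2)$ the oscillation vanishes; if $Q\cap B(0,2R)=\emptyset$ then $\mathcal O(f-f_R;Q)=\mathcal O(f;Q)$, small by (2) or (3) according as $l_Q\gtrsim R$ or $l_Q\ll R$. Transitional cubes require the Leibniz-type inequality
\begin{align*}
\mathcal O((f-c_R)(1-\eta_R);Q)\lesssim \|1-\eta_R\|_{L^\infty(Q)}\,\mathcal O(f;Q)+|(f-c_R)_Q|\cdot l_Q/R,
\end{align*}
combined with John--Nirenberg-type control $|(f-c_R)_Q|\lesssim \|f\|_\bmo(1+|\log(R/l_Q)|)$ and a careful case analysis linking (1)--(3) to cubes of varying scale in the annular transition region around $\partial B(0,R)$. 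The uniform treatment of these intermediate cubes, where neither (2) nor (3) is sharply applicable, is the central technical difficulty.
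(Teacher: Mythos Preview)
You have interchanged the labels: verifying (1)--(3) for $C_c^\infty$ and transferring by density is the \emph{only if} direction; building $g_{R,\delta}$ is the \emph{if} direction. The mathematics is unaffected.

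The paper does not prove this lemma --- it is cited from Uchiyama --- but it does prove the parallel Theorem~\ref{theorem, new characterization of CMO} following Uchiyama's method, so one can compare. That method is structurally different from yours: instead of a smooth cutoff, Uchiyama builds a \emph{piecewise constant} approximant $g_\epsilon(x)$ equal to the average (median, in the paper's version) of $f$ over a dyadic cube $Q_x\ni x$, where $l(Q_x)$ is chosen small on a large central region (using (1)), grows as $|x|$ increases (using (3)), and $g_\epsilon$ is frozen to a constant far out (using (2)); one then checks $\|f-g_\epsilon\|_\bmo\lesssim\epsilon$ by a small/large/transitional case split and mollifies at the end. Your Leibniz-and-cutoff route is more analytic, Uchiyama's more combinatorial; both face essentially the same three-regime case analysis. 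Uchiyama's scheme buys a trivial mollification step, since $g_\epsilon$ minus a constant is already bounded, compactly supported, and locally nearly constant, whence $\|g_\epsilon*\varphi_t-g_\epsilon\|_\infty\lesssim\epsilon$; your step~(b), by contrast, requires a separate covering argument to pass from condition~(1) for $f_R$ to $\|f_R-f_R*\psi_\delta\|_\bmo\to 0$.

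One point to tighten in step~(a): both the case $Q\cap B(0,2R)=\emptyset$ with $l_Q\ll R$ and the moderate-scale transitional cubes need not condition~(3) as literally stated (one fixed $Q$, translate tending to infinity) but the uniform form $\sup\{\mathcal O(f;Q):Q\cap[-d,d]^n=\emptyset\}\to 0$ as $d\to\infty$. This does follow from (1)+(2)+(3) together --- the paper asserts exactly this at the start of the proof of Theorem~\ref{theorem, new characterization of CMO} --- but the short deduction should be supplied.
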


Before giving our new characterization of $\cmo$, we first give the following result on
the regularity of median value for measurable functions.

\begin{proposition}\label{proposition, approximation by local mean oscillation}
  Let $\lambda\in (0,1/2)$, $f$ be a real-valued measurable function, $Q, \tQ$ be two cubes satisfying $\tQ\subset Q$.
  Suppose that $a_{\lambda}(f;P)<\epsilon$
  for all cubes $P\subset Q$ with $|P|\geq |\tQ|$.
  Then, the following estimate is valid:
  \be
  |m_f(Q)-m_f(\tQ)|\leq \left(1+\left\lfloor\log_{(1/2+\lambda)^{-1}}\frac{|Q|}{|\tQ|}\right\rfloor\right)\epsilon.
  \ee
  Specially, if $|\tQ|>(1/2+\lambda)|Q|$, we have
  \be
  |m_f(Q)-m_f(\tQ)|\leq {a}_{\lambda}(f;Q).
  \ee
\end{proposition}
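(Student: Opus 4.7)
The strategy is to establish the one-step bound (the ``Specially'' clause) first and then iterate it along a shrinking chain of nested cubes joining $Q$ to $\tQ$.

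For the one-step estimate, suppose $|\tQ|\ge (1/2+\lambda)|Q|$ (the same argument accommodates both $>$ and $\ge$). Set $\alpha:=a_{\lambda}(f;Q)$. By the definition of $a_{\lambda}$ together with the left-continuity of the non-increasing rearrangement from Lemma \ref{l-rearran propt}(A), the exceptional set $E:=\{x\in Q:|f(x)-m_f(Q)|>\alpha\}$ has $|E|\le\lambda|Q|$. Assuming for contradiction that $m_f(\tQ)>m_f(Q)+\alpha$, every $x\in \tQ\setminus E$ satisfies $f(x)\le m_f(Q)+\alpha<m_f(\tQ)$, so $\tQ\setminus E\subset\{x\in \tQ:f(x)<m_f(\tQ)\}$. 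The median property bounds the latter by $|\tQ|/2$, which forces $|\tQ|-\lambda|Q|\le|\tQ|/2$, i.e.\ $|\tQ|\le 2\lambda|Q|$. Since $\lambda<1/2$ gives $2\lambda<1/2+\lambda$, this contradicts the hypothesis $|\tQ|\ge (1/2+\lambda)|Q|$. The reverse inequality $m_f(\tQ)\ge m_f(Q)-\alpha$ follows by applying the same argument to $-f$, yielding $|m_f(Q)-m_f(\tQ)|\le a_{\lambda}(f;Q)$.

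For the general estimate, let $k:=\lfloor \log_{(1/2+\lambda)^{-1}}(|Q|/|\tQ|)\rfloor$, so that $(1/2+\lambda)^{k+1}|Q|<|\tQ|\le(1/2+\lambda)^k|Q|$. I would then construct a chain $Q=Q_0\supset Q_1\supset\cdots\supset Q_{k+1}=\tQ$ with $|Q_i|=(1/2+\lambda)^i|Q|$ for $0\le i\le k$ and $\tQ\subset Q_i$ for every $i$. The cube $Q_i$ is produced from $Q_{i-1}$ by an elementary coordinate-wise sliding argument: since $l_{\tQ}\le l_{Q_i}\le l_{Q_{i-1}}$ and $\tQ\subset Q_{i-1}$, on each axis one can choose an interval of length $l_{Q_i}$ that contains the projection of $\tQ$ and lies inside the projection of $Q_{i-1}$. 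By construction each ratio $|Q_{i+1}|/|Q_i|$ is at least $1/2+\lambda$, and each $Q_i$ is a sub-cube of $Q$ with $|Q_i|\ge|\tQ|$, so the hypothesis yields $a_{\lambda}(f;Q_i)<\epsilon$. The one-step bound then gives $|m_f(Q_i)-m_f(Q_{i+1})|<\epsilon$ for every $i$, and telescoping produces
\[
|m_f(Q)-m_f(\tQ)|\le\sum_{i=0}^{k}|m_f(Q_i)-m_f(Q_{i+1})|\le(k+1)\epsilon,
\]
which is exactly the desired bound.

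The delicate step is the one-step argument: the assumption $\lambda<1/2$ is used precisely to arrange $2\lambda<1/2+\lambda$, which is what makes the contradiction work and dictates why the base ratio in the iteration must be $1/2+\lambda$. The chain construction is geometric bookkeeping, whose only subtlety is that each $Q_i$ must simultaneously be a sub-cube of $Q$ (so the hypothesis on $a_\lambda$ is in force) and contain $\tQ$ (so the one-step bound applies to the pair $(Q_i,Q_{i+1})$); the count $k+1$ is then forced because the sharpest admissible shrinking ratio is $1/2+\lambda$.
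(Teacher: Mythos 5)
Your proof is correct and follows essentially the same strategy as the paper's: a one-step median-stability estimate for cubes shrinking by the ratio $1/2+\lambda$, iterated along a nested chain of $k+1$ cubes joining $Q$ to $\tQ$. The only cosmetic difference is that you prove the one-step bound by contradiction via the median property while the paper directly counts the measure of the good set $\{x\in Q_1:|f(x)-m_f(Q)|\le a_{\lambda}(f;Q)\}$ and shows it exceeds $|Q_1|/2$; these are the same estimate.
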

\begin{proof}
Take $N:=\lfloor\log_{(1/2+\lambda)^{-1}}\frac{|Q|}{|\tQ|}\rfloor$, we can find a sequence of cubes $\{Q_i\}_{i=1}^N$ satisfying
\be
Q\supset Q_1\supset Q_2\supset \cdots \supset Q_N\supset \tQ,\ \  |Q_i|=(1/2+\lambda)^i|Q|\ \ (i=1,2,\cdots,N)
\ee
and
\be
(1/2+\lambda)^N|Q|\geq |\tQ|> (1/2+\lambda)^{N+1}|Q|.
\ee
By the definition of non-increasing rearrangement we have
\be
|\{x\in Q: |f(x)-m_f(Q)|\geq {a}_{\lambda}(f;Q)\}|\leq \la|Q|.
\ee
It implies that
\be
|\{x\in Q: |f(x)-m_f(Q)|\leq {a}_{\lambda}(f;Q)\}|\geq (1-\la)|Q|.
\ee
Then,
\be
\begin{split}
  |\{x\in Q_1: |f(x)-m_f(Q)|\leq {a}_{\lambda}(f;Q)\}|
  \geq &
  (1-\la)|Q|-(|Q|-|Q_1|)
  \\
  = &
  |Q_1|-\lambda|Q|
  \\
  = &
  |Q_1|-\frac{\lambda}{1/2+\lambda}|Q_1|
    \\
  = &
  \frac{1/2}{1/2+\lambda}|Q_1|>\frac{|Q_1|}{2}.
\end{split}
\ee
This and the definition of $m_f(Q_1)$ yield that
\be
|\{x\in Q_1: f(x)\geq m_f(Q)-{a}_{\lambda}(f;Q)\}|> \frac{|Q_1|}{2}\geq|\{x\in Q_1: f(x)> m_f(Q_1)\}|
\ee
and
\be
|\{x\in Q_1: f(x)\leq m_f(Q)+{a}_{\lambda}(f;Q)\}|> \frac{|Q_1|}{2}\geq|\{x\in Q_1: f(x)< m_f(Q_1)\}|.
\ee
Hence,
\be
m_f(Q)-{a}_{\lambda}(f;Q)\leq m_f(Q_1)\leq  m_f(Q)+{a}_{\lambda}(f;Q).
\ee
We have $|m_f(Q)-m_f(Q_1)|\leq {a}_{\lambda}(f;Q)$.
A similar argument yields that
\be
|m_f(Q_i)-m_f(Q_{i+1})|\leq {a}_{\lambda}(f;Q_i),\ (i=1,2,\cdots,N-1)
\ee
and
\be
|m_f(Q_N)-m_f(\tQ)|\leq {a}_{\lambda}(f;Q_N).
\ee
Combining the obtained estimates, we get
\be
\begin{split}
  |m_f(Q)-m_f(\tQ)|
  \leq &
  |m_f(Q)-m_f(Q_1)|+\sum_{i=1}^{N-1}|m_f(Q_i)-m_f(Q_{i+1})|+|m_f(Q_N)-m_f(\tQ)|
  \\
  \leq &
  {a}_{\lambda}(f,Q)+\sum_{i=1}^{N}{a}_{\lambda}(f,Q_i)<(N+1)\epsilon
\end{split}
\ee
which completes this proof.
\end{proof}

Now we state our new characterization of $\cmo$ in terms of local mean oscillation.

\begin{theorem}\label{theorem, new characterization of CMO}
  Let $f\in \bmo$. Then $f\in \cmo$ if and only if the following three conditions hold:
  \bn
  \item $\lim_{r\rightarrow 0}\sup\limits_{|Q|=r}\overline{a}_{\la}(f;Q)=0$,
  \item $\lim_{r\rightarrow \infty}\sup\limits_{|Q|=r}\overline{a}_{\la}(f;Q)=0$,
  \item $\lim_{d\rightarrow \infty}\sup\limits_{Q\cap [-d,d]^n=\emptyset}\overline{a}_{\la}(f;Q)=0$.
  \en
\end{theorem}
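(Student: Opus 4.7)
The plan is to reduce Theorem~\ref{theorem, new characterization of CMO} to Uchiyama's characterization (Lemma~\ref{lemma, old characterization of CMO}) by proving that the three conditions on $\overline{a}_\la$ in the statement are equivalent to Uchiyama's three conditions on $\mathcal O$. The necessity direction is essentially free: the Chebyshev-type bound $\overline{a}_\la(f;Q)\leq \la^{-1}\mathcal O(f;Q)$ already recorded in \eqref{e-J-S low bdd} transfers each of Uchiyama's conditions (1)--(3) for $\mathcal O$ into the corresponding condition for $\overline{a}_\la$.

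For the sufficiency I would assume conditions (1)--(3) on $\overline{a}_\la$ and invoke a Lerner-type pointwise sparse decomposition: for every cube $Q_0$ there exist $c_{Q_0}\in\bbC$ and a sparse family $\mathcal S$ of subcubes of $Q_0$ (with measurable $E_P\subset P$ pairwise disjoint and $|E_P|\geq\tfrac12|P|$) such that
\be
|f(x)-c_{Q_0}|\leq C\sum_{P\in\mathcal S}\overline{a}_\la(f;P)\chi_P(x)\qquad\text{a.e.\ on }Q_0.
\ee
An alternative route is to build such an estimate directly from Proposition~\ref{proposition, approximation by local mean oscillation} via a stopping-time selection on dyadic subcubes. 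Integrating and using $\mathcal O(f;Q_0)\leq 2\wt{\mathcal O}(f;Q_0)$ yields
\be
\mathcal O(f;Q_0)\leq \f{C}{|Q_0|}\sum_{P\in\mathcal S}\overline{a}_\la(f;P)|P|.
\ee

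Given $\ep>0$, conditions (1)--(3) furnish $r_0<r_1$ and $d_0$ with $\overline{a}_\la(f;P)<\ep$ whenever $|P|<r_0$, $|P|>r_1$, or $P\cap[-d_0,d_0]^n=\emptyset$. Split $\mathcal S=\mathcal S_g\cup\mathcal S_b$ accordingly: ``bad'' cubes are those with $r_0\leq|P|\leq r_1$ and $P\cap[-d_0,d_0]^n\neq\emptyset$, so every $P\in\mathcal S_b$ sits inside the fixed bounded set $E_0:=[-d_0-r_1^{1/n},\,d_0+r_1^{1/n}]^n$. Disjointness of $\{E_P\}$ together with $\overline{a}_\la(f;P)\lesssim\|f\|_{\bmo}$ gives $\sum_{P\in\mathcal S_b}\overline{a}_\la(f;P)|P|\leq C\|f\|_{\bmo}|E_0|$, while sparseness yields $\sum_{P\in\mathcal S_g}\overline{a}_\la(f;P)|P|\leq C\ep|Q_0|$. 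Combining,
\be
\mathcal O(f;Q_0)\leq C_1\ep+\f{C_2(\ep)\|f\|_{\bmo}}{|Q_0|}.
\ee
From this single estimate Uchiyama's three conditions for $\mathcal O$ follow immediately: when $|Q_0|<r_0$ or $Q_0\cap[-d_0,d_0]^n=\emptyset$ one has $\mathcal S_b=\emptyset$, and when $|Q_0|\to\infty$ the remainder term vanishes. Sending $\ep\downarrow 0$ in each case and applying Lemma~\ref{lemma, old characterization of CMO} concludes $f\in\cmo$.

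I expect the main technical obstacle to be invoking Lerner's pointwise sparse decomposition at the \emph{specific} $\la\in(0,1/2]$ of the statement, since standard formulations fix $\la$ to a small dimensional constant $\la_0$. Lemma~\ref{l-local mean osci equiv}(ii) provides equivalence of the resulting $\bmo$ norms for different $\la$, and any remaining $\la$-dependence is absorbed by the ``bad'' contribution (which is already of the form $\|f\|_{\bmo}\cdot |E_0|/|Q_0|$); complex-valuedness is handled either through $\overline{a}_\la$ directly or via the real/imaginary decomposition of Remark~\ref{r-bmo norm equiv re-im}. Property (D) in Lemma~\ref{l-rearran propt} is likely to enter if one instead approaches the sufficiency by truncating $f$ on disjoint annular shells and controlling the tails via disjoint-support rearrangement bounds.
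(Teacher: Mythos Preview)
Your approach is genuinely different from the paper's and more conceptual, but it contains a gap you have identified but not closed. The paper does not go through Lerner's sparse formula at all; instead it follows Uchiyama's original strategy: reduce to real-valued $f$, define $g_\ep(x):=m_f(Q_x)$ on a nested family of dyadic cubes $Q_x$ whose sizes are dictated by parameters $i_\ep,j_\ep,k_\ep$ coming from (1)--(3), and then verify by hand (three cases, using Proposition~\ref{proposition, approximation by local mean oscillation} and Property~(D) of Lemma~\ref{l-rearran propt}) that $a_{\tilde\la}(f-g_\ep;Q)\lesssim\ep$ for every cube $Q$, with $\tilde\la=(2\la+1)/4$. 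John--Str\"omberg then yields $\|f-g_\ep\|_{\bmo}\lesssim\ep$, and a mollification of $g_\ep$ finishes. This route is longer but is tailored to the given $\la$ from the outset.

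The gap in your route is exactly the $\la$-mismatch you flag, and your proposed patch does not close it. Lerner's pointwise decomposition produces $a_{\la_0}(f;P)$ with $\la_0=2^{-n-2}$; since $t\mapsto((f-c)\chi_P)^*(t)$ is nonincreasing, one has $\overline a_{\la_0}(f;P)\ge\overline a_\la(f;P)$ whenever $\la\ge\la_0$, so the per-cube smallness of $\overline a_\la(f;P)$ on your ``good'' family does \emph{not} give smallness of $a_{\la_0}(f;P)$ there. Lemma~\ref{l-local mean osci equiv}(ii) only equates the \emph{suprema} $\sup_Q\overline a_{\la_0}\sim\sup_Q\overline a_\la$, which is a global statement and cannot be fed cube by cube into a sparse sum; and the ``bad'' contribution absorbs nothing here, because in the two decisive cases ($|Q_0|<r_0$, or $Q_0\cap[-d_0,d_0]^n=\emptyset$) your bad family is empty and the entire estimate rests on the good sum. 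Your alternative suggestion---running a stopping-time argument directly from Proposition~\ref{proposition, approximation by local mean oscillation}---is the correct repair, but carrying it out for arbitrary $\la\in(0,1/2)$ amounts to proving a $\la$-flexible version of Lerner's formula, which is essentially the work the paper performs in its three-case verification.
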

\begin{proof}
By the conditions (1),(2) and (3) mentioned in Lemma \ref{lemma, old characterization of CMO}, we can deduce
\be
\lim_{d\rightarrow \infty}\sup\limits_{Q\cap [-d,d]^n=\emptyset}{\mathcal{O}}(f;Q)=0.
\ee
Combining this with Lemma \ref{lemma, old characterization of CMO} and the fact
\be
\overline{a}_{\lambda}(f;Q)\lesssim \frac{1}{\lambda}{\mathcal{O}}(f;Q),
\ee
we complete the proof of "only if" part.

For the "if" part, we follow the arguments in \cite{Uchiyama78TohokuMathJ} with some careful technical modifications fitting our cases.
Conditions (1)-(3) are now assumed to be true.
Since
\be
\widetilde{a}_{\lambda}(Re(f);Q)\leq \overline{a}_{\lambda}(f;Q),\ \
\widetilde{a}_{\lambda}(Im(f);Q)\leq \overline{a}_{\lambda}(f;Q).
\ee
It yields that the real-valued function $Re(f)$ and $Im(f)$ also satisfy the conditions (1)-(3).
Once we verify both $Re(f)$ and $Im(f)$ are $\cmo$ functions, $f$ is also in $\cmo$.
Thus, at the remainder of this proof, we only need to deal with the real-valued $f$.
We use $R_i$ to denote the cube $[-2^{i},2^i]^n$.

For a fixed small number $\epsilon>0$, there exist three constants
depend on $\ep$, denoted by integers $i_{\ep}$, $j_{\ep}$ and $k_{\ep}$ respectively, satisfying $i_{\ep}+2\leq k_{\ep}$,
\be
\sup\{{a}_{\lambda}(f;Q): |Q|\leq 2^{i_{\ep}+1}\}<\ep,\ \sup\{{a}_{\lambda}(f;Q): |Q|\geq 2^{j_{\ep}}\}<\ep,
\ee
and
\be
\sup\{{a}_{\lambda}(f;Q): Q\cap R_{k_{\ep}}=\emptyset\}<\ep.
\ee
Let
\be
d_1:=d_1(\epsilon)=k_{\ep}+1.
\ee
For $x\in R_{d_1}$, $Q_x$ means the dyadic cube of side length $2^{i_{\ep}}$ that contains $x$.
If $x\in R_m\bs R_{m-1}$ for $m>d_1$, $Q_x$ means the dyadic cube with side length $2^{i_{\ep}+m-d_1}$.
Using condition (2) and Proposition \ref{proposition, approximation by local mean oscillation}, we can find a sufficient large $d_2\geq j_{\ep}$
such that
\be
|m_f(Q_x)-m_f(R_{m})|<\frac{\ep}{2},\ x\in R_{m}\bs R_{m-1},\ \ m\geq d_2,
\ee
and
\be
|m_f(R_m)-m_f(R_{m-1})|<\ep,\ \ m\geq d_2.
\ee
Set $g_{\ep}(x):=m_f(Q_x)$ for $x\in R_{d_2}$, the above inequality implies $|g_{\ep}(x)-g_{\ep}(y)|<\ep$ for all $x,y\in R_{m}\bs R_{m-1}$
with $m\geq d_2$.
Take
\be
d_3:=d_2+4+\left\lfloor\log_2\sqrt[n]{\frac{4}{1-2\la}}\right\rfloor.
\ee
For $x\in R_m\bs R_{m-1}$ with $d_2<m\leq d_3$, we set $g_{\ep}(x):=m_f(Q_x)$. And, we set $g_{\ep}(x):=m_f(R_{d_3})$ for $x\in R_{d_3}^c$.

By Proposition \ref{proposition, approximation by local mean oscillation} and
the construction of $g_{\ep}$, if $\overline{Q_x}\cap \overline{Q_y}\neq \emptyset$ or $x,y\in R_{m}\bs R_{m-1}$ with $d_2\leq m\leq d_3$, then
\ben \label{proof, 1}
|g_{\ep}(x)-g_{\ep}(y)|<C\ep
\een
where $C$ is a constant independent of $\epsilon$, but may depends on $\lambda$.
Take a $C_c^{\infty}(\mathbb{R}^n)$ function $\va$ supported on $B(0,1)$, satisfying $\|\va\|_{L^1}=1$
Set $\va_{t}(x):=\frac{1}{t^n}\va(\frac{x}{t})$, $t\in (0,\infty)$.
It follows from (\ref{proof, 1}) that
\be
\begin{split}
|(g_{\ep}\ast \va_t)(x)-g_{\ep}(x)|
= &
|\int_{B(0,1)}(g_{\ep}(x-ty)-g_{\ep}(x))\va(y)dy|
\\
\leq &
\sup_{x\in \rn,y\in B(0,1)}|g_{\ep}(x-ty)-g_{\ep}(x)|\ls\ep
\end{split}
\ee
for sufficient small $t$.
Note $g_{\ep,t}:=g_{\ep}\ast \va_t-m_f(R_{d_3})$ is a $C_c^{\infty}(\mathbb{R}^n)$ function,
we deduce that
\ben\label{proof, 3}
\|g_{\ep,t}-g_{\ep}\|_{\bmo}=
\|(g_{\ep}\ast \va_t-m_f(R_{d_3}))-g_{\ep}\|_{\bmo}\lesssim \|g_{\ep}\ast \va_t-g_{\ep}\|_{L^{\infty}}\lesssim \ep.
\een
Let
\be
\tla:=\frac{2\la+1}{4}.
\ee
We will verify that for all cubes $Q$,
\ben\label{proof, 2}
a_{\tla}(f-g_{\ep};Q)\ls\ep.
\een
This part is divided into following three cases.

\textbf{Case 1.} $Q\subset R_{d_3}$. We further consider the following two cases.\\
When
\be
\max\{l(Q_x): Q_x\cap Q\neq \emptyset\}\geq 2l(Q).
\ee
If $Q\cap R_{d_1}\neq \emptyset$, then $|Q|\leq 2^{ni_\ep}$.
From this and the definition of $i_{\ep}$, we obtain ${a}_{\lambda}(f;Q)<\ep$.\\
If $Q\cap R_{d_1}= \emptyset$, by the definition of $k_{\ep}$ we have ${a}_{\lambda}(f;Q)<\ep$.
Observe that
\be
\sharp\{Q_x: Q_x\cap Q\neq \emptyset\}\ls1.
\ee

Thus, $|g_{\ep}(x)-g_{\ep}(y)|<C\ep$ for any two cubes having nonempty intersection with $Q$.
This implies $|g_{\ep}(x)-(g_{\ep})_Q(x)|<C\ep$ for all $x\in Q$.
So,
\be
\begin{split}
  a_{\tla}(f-g_{\ep};Q)
  \sim
  \tilde a_{\tla}(f-g_{\ep};Q)
  \leq &
  ((f-g_{\ep}+(g_{\ep})_Q-m_f(Q))\chi_Q)^*(\tla|Q|)
  \\
  \leq &
  ((|f-m_f(Q)|+C\ep)\chi_Q)^*(\tla|Q|)
  \\
  \leq &
  ((f-m_f(Q))\chi_Q)^*(\la|Q|)+C\ep
  =
  {a}_{\lambda}(f;Q)+C\ep\ls\ep,
\end{split}
\ee
where in the last inequality we use Property (B) in Lemma \ref{l-rearran propt}.

When
\be
\max\{l(Q_x): Q_x\cap Q\neq \emptyset\}\leq 2 l(Q),
\ee
take
\be
v=v(\lambda):=\max\left\{3+\left\lfloor\log_2\left(\sqrt[n]{\frac{1+2\lambda}{4\lambda}}-1\right)^{-1}\right\rfloor, 0\right\}.
\ee
Let $Q_x^{v}$ be the sub dyadic cube of $Q_x$ with $l(Q_x^{v})=2^{-v}l(Q_x)$.
Observe that if $Q_x^{v}\subset Q$,
\be
l(Q_x^{v})=2^{-v}l(Q_x)\leq 2^{1-v}l(Q),
\ee
this implies that
\be
\begin{split}
  \la\sum_{Q_x^v\cap Q\neq \emptyset}|Q_x^v|
  \leq &
  \la(1+2\cdot 2^{1-v})^n|Q|
  \\
  = &
  \la(1+2^{2-v})^n|Q|
  \leq
  \la\left(1+\sqrt[n]{\frac{1+2\la}{4\la}}-1\right)^n|Q|=\tla|Q|,
\end{split}
\ee
where in the last inequality we use the fact $2^{2-v}\leq \sqrt[n]{\frac{1+2\lambda}{4\lambda}}-1$ implied by the choice of $v$.
From this, we obtain that
\be
\begin{split}
  a_{\tla}(f-g_{\ep},Q)
  = &
  ((f-g_{\ep})\chi_Q)^{*}(\tla |Q|)
  \\
  = &
  (\sum_{Q^v_x\cap Q\neq \emptyset}(f-m_f(Q_x))\chi_{Q_x^v\cap Q})^{*}(\tla |Q|)
  \\
  \leq &
  (\sum_{Q^v_x\cap Q\neq \emptyset}(f-m_f(Q_x))\chi_{Q_x^v})^{*}(\tla |Q|)
    \\
  \leq &
  (\sum_{Q_x^v\cap Q\neq \emptyset}(f-m_f(Q_x))\chi_{Q_x^v})^{*}(\lambda\sum_{Q_x^v\cap Q\neq \emptyset}|Q_x^v|)
  \\
  \leq &
  \max_{Q_x^v\cap Q\neq \emptyset}((f-m_f(Q_x))\chi_{Q_x^v})^{*}(\lambda|Q_x^v|),
\end{split}
\ee
where in the last inequality we use Property (D) in Lemma \ref{l-rearran propt}.
By the definition of $Q_x$, for any cube $\tQ\subset Q_x$,
\be
{a}_{\lambda}(f,\tQ)\ls\ep.
\ee
This and Proposition \ref{proposition, approximation by local mean oscillation} imply that
\be
|m_f(Q_x)-m_f(Q_x^v)|\ls\left(1+\left\lfloor\log_{(1/2+\lambda)^{-1}}\frac{|Q_x|}{|Q_x^v|}\right\rfloor\right)\epsilon\ls\epsilon.
\ee
Thus,
\be
\begin{split}
((f-m_f(Q_x))\chi_{Q_x^v})^{*}(\lambda|Q_x^v|)
\leq &
((f-m_f(Q_x^v))\chi_{Q_x^v})^{*}(\lambda|Q_x^v|)+|m_f(Q_x)-m_f(Q_x^v)|
\\
= &
{a}_{\lambda}(f,Q_x^v)+|m_f(Q_x)-m_f(Q_x^v)|\ls\ep.
\end{split}
\ee
By the above estimates, we get $a_{\tla}(f-g_{\ep},Q)\ls\ep$.

\textbf{Case 2.} $Q\subset R_{d_2}^c$.\\
By the definition of $d_2$ and $d_3$, we obtain $|g_{\ep}(x)-g_{\ep}(x)|<C\ep$ for all $x,y \in R_{d_2}^c$,
which implies that
\be
|(g_{\ep})_Q(x)-g_{\ep}(x)|<C\ep,\ \ x\in R_{d_2}^c.
\ee
From this and the fact ${a}_{\lambda}(f;Q)<C\ep$, we obtain
\be
\begin{split}
 a_{\tla}(f-g_{\ep};Q)
  \sim
  \tilde a_{\tla}(f-g_{\ep};Q)
  \leq &
  ((f-g_{\ep}+(g_{\ep})_Q-m_f(Q))\chi_Q)^*(\tla|Q|)
  \\
  \leq &
  ((f-m_f(Q))\chi_Q)^*(\la|Q|)+C\ep
  =
  {a}_{\lambda}(f;Q)+C\ep\ls\ep,
\end{split}
\ee

\textbf{Case 3.} $Q\cap R_{d_2}\neq \emptyset$, $Q\cap R_{d_3}^c\neq \emptyset$.\\
Firstly, we claim that
\be
\tla|Q|-2|Q\cap R_{d_2}|\geq \la|Q|.
\ee
Since $Q\cap R_{d_2}\neq \emptyset$ and $Q\cap R_{d_3}^c\neq \emptyset$, then $l(Q)\geq 2^{d_3}-2^{d_2}\geq 2^{d_3-1}$.
This and the choice of $d_3$ yields that
\be
\begin{split}
  (\tla-\la)|Q|
  \geq &
  \frac{1-2\la}{4}\cdot 2^{n(d_3-1)}
  \\
  \geq &
  \frac{1-2\la}{4}\cdot \frac{4}{1-2\la}\cdot 2^{n(d_2+2)}\geq 2|R_{d_2}|\geq 2|Q\cap R_{d_2}|.
\end{split}
\ee
By a similar argument in the last case,
\be
|(g_{\ep})_{Q\cap R_{d_2}^c}(x)-g_{\ep}(x)|<C\ep,\ \ x\in R_{d_2}^c.
\ee
Observe that
\be
l(Q)\geq 2^{d_3}-2^{d_2}\geq 2^{d_2}\geq 2^{j_{\ep}}.
\ee
By the definition of $j_{\ep}$, we get
\be
{a}_{\lambda}(f;Q)<\ep.
\ee
Combining this with the above estimates in this case, we obtain
\be
\begin{split}
a_{\tla}(f-g_{\ep};Q)
\sim &
\tilde a_{\tla}(f-g_{\ep};Q)
\\
\leq &
((f-m_f(Q)-g_{\ep}+(g_{\ep})_{Q\cap R_{d_2}^c}(x))\chi_Q)^*(\tla|Q|)
\\
\leq &
((f-m_f(Q)-g_{\ep}+(g_{\ep})_{Q\cap R_{d_2}^c}(x))\chi_{Q\cap R_{d_2}^c})^*(\tla|Q|-2|Q\cap R_{d_2}|)
\\
& +
((f-m_f(Q)-g_{\ep}+(g_{\ep})_{Q\cap R_{d_2}^c}(x))\chi_{Q\cap R_{d_2}})^*(2|Q\cap R_{d_2}|)
\\
= &
((f-m_f(Q)-g_{\ep}+(g_{\ep})_{Q\cap R_{d_2}^c}(x))\chi_{Q\cap R_{d_2}^c})^*(\tla|Q|-2|Q\cap R_{d_2}|)
\\
\leq &
(|(f-m_f(Q)\chi_{Q\cap R_{d_2}^c}|+C\ep)^*(\tla|Q|-2|Q\cap R_{d_2}|)
\\
\leq &
((f-m_f(Q))\chi_{Q\cap R_{d_2}^c})^*(\tla|Q|-2|Q\cap R_{d_2}|)+C\ep
\\
\leq &
((f-m_f(Q))\chi_{Q})^*(\tla|Q|-2|Q\cap R_{d_2}|)+C\ep
\\
\leq &
((f-m_f(Q))\chi_{Q})^*(\la|Q|)+C\ep
={a}_{\lambda}(f;Q)+C\ep\ls\ep.
\end{split}
\ee
We have now completed the proof of (\ref{proof, 2}). It follows from the John-Str\"{o}mberg equivalence \eqref{e-J-S equiv}
%$$\|f\|_{\bmo}\sim \|f\|_{\rm BMO_{\tilde\lambda}(\rn)}$$
that
\be
\|f-g_{\ep}\|_{\bmo}\ls\ep.
\ee
Combining this with (\ref{proof, 3}), we obtain
\be
\|f-g_{\ep,t}\|_{\bmo}\ls\ep,
\ee
where $g_{\ep,t}$ is a $C_c^{\infty}(\bbR)$ function, the implicit constant is independent of $\ep$. This completes the whole proof.
\end{proof}

\section{Necessity of compact commutators}\label{s4}
This section is devoted to the proof of Theorem \ref{theorem, necessity} and is divided into three subsections.
In Subsection \ref{s4.1}, for any given cube $Q$ and  real-valued measurable function $b$, we construct a function $f$ closely related to $Q$ and
obtain a lower bound of  the weighted $L^q$
norm of $(T_{\Omega,\,\al})_b^m(f)$ over certain subset of $Q$ in terms of $a_{\la}(b;Q)^m$. In Subsection \ref{s4.2},
for $b\in\bmo$ and any cube $Q$, we also obtain an upper bound of  the weighted $L^q$
norm of $(T_{\Omega,\,\al})_b^m(f)$ over the annulus $2^{d+1}Q\bs 2^dQ$ in terms of $2^{-\d dn/p}d^m$, where
$d\in\mathbb N$ large enough, $\delta$ is a positive constant depending on $w\in\apq$ and $f$ is aforementioned.
Using these upper and lower bounds, we further present the proof of Theorem \ref{theorem, necessity} in Subsection \ref{s4.3}.

\subsection{Lower estimates}\label{s4.1}
This part follows by the approach of Lerner-Ombrosi-Rivera-Rios \cite{LernerOmbrosiRivera17arxiv}.
For the self-containing of this paper, we reproof the following proposition for $\la\in (0,1)$ with a slight modification fitting our further proof.  Denote $Q_0:=[-1/2,1/2]^n$.

\begin{proposition}[\cite{LernerOmbrosiRivera17arxiv}]\label{proposition, key lower estimates}
  Let $\la\in (0,1)$, $b$ be a real-valued measurable function.
  Suppose that $\Om$ satisfy the assumption in Theorem \ref{theorem, necessity}.
  There exist $\ep_0>0$ and $k_0>10\sqrt{n}$ depending only on $\Om$ and $n$ such that the following holds.
  For every cube $Q$, there exists another cube $P$ with the same side length of $Q$ satisfying $|c_Q-c_P|=k_0l_Q$,
  and measurable sets $E\subset Q$ with $E=\frac{\la}{2}|Q|$, and $F\subset P$ with $|F|=\frac{1}{2}|Q|$,
  and $G\subset E\times F$ with $|G|\geq\frac{\la|Q|^2}{8}$ such that
  \bn
  \item ${a}_{\lambda}(b;Q)\leq |b(x)-b(y)|$ for all $(x,y)\in E\times F$;
  \item $\Om\left(\frac{x-y}{|x-y|}\right)$ and $b(x)-b(y)$ do not change sign in $E\times F$;
  \item $\left|\Om\left(\frac{x-y}{|x-y|}\right)\right|\geq \ep_0$ for all $(x,y)\in G$.
  \en
\end{proposition}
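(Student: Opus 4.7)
The plan is to split the argument into a geometric component, depending only on $\Om$, which fixes the cube $P$ and ensures condition (3), and a combinatorial component, depending only on $b$, which constructs $E$ and $F$ and ensures conditions (1) and (2). For the geometry, I would use that $\Om$ is bounded, measurable, has constant sign on some open subset of $\bbS^{n-1}$, and is not equivalent to zero there, to fix $\ep_0>0$ and a subset $V_0\subset\bbS^{n-1}$ of positive surface measure on which $|\Om|\geq\ep_0$ with a fixed sign. Passing to a Lebesgue density point $\theta_0$ of $V_0$, I would find, for any prescribed aperture, a spherical cap $V$ around $\theta_0$ in which $V_0$ fills at least three quarters of the surface measure. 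With this $V$ fixed, I would set $c_P:=c_Q-k_0l_Q\theta_0$ and choose $k_0>10\sqrt n$, depending only on $\Om$ and $n$, large enough that for every $x\in Q$ and $y\in P$ the direction $(x-y)/|x-y|$ lies in $V$; this is an elementary consequence of the longitudinal separation $|x-y|\gtrsim k_0l_Q$ against the transverse deviations $\lesssim\sqrt n\,l_Q$.

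For the combinatorial component I would use the medians $m:=m_b(Q)$ and $m':=m_b(P)$ and decompose
\[A^+:=\{x\in Q:b(x)\geq m+a_\la(b;Q)\},\qquad A^-:=\{x\in Q:b(x)\leq m-a_\la(b;Q)\},\]
\[B^+:=\{y\in P:b(y)\geq m'\},\qquad B^-:=\{y\in P:b(y)\leq m'\}.\]
The non-increasing rearrangement then yields $|A^+\cup A^-|\geq\la|Q|$, while the median condition yields $|B^+|,|B^-|\geq|Q|/2$. I would select a matching pair by case analysis on the sign of $m-m'$ and on which of $|A^\pm|$ dominates: for example, if $m\geq m'$ and $|A^+|\geq\la|Q|/2$, taking $E\subset A^+$ with $|E|=\la|Q|/2$ and $F\subset B^-$ with $|F|=|Q|/2$ gives $b(x)-b(y)\geq (m+a_\la(b;Q))-m'\geq a_\la(b;Q)$ on $E\times F$, so that $b(x)-b(y)$ has constant positive sign with absolute value at least $a_\la(b;Q)$; the remaining sub-cases follow by the symmetric choice with $+$ and $-$ interchanged.

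To glue the two parts together I would put $G:=\{(x,y)\in E\times F:(x-y)/|x-y|\in V_0\}$. Since $V_0$ fills most of the narrow cap of directions swept out by $(x-y)/|x-y|$, an elementary Fubini estimate gives $|G|\geq|E\times F|/2=\la|Q|^2/8$, and on $V_0$ we have $|\Om|\geq\ep_0$ with a fixed sign, which furnishes (3) and the $\Om$-half of (2). The main obstacle will be the sign-matching step in the combinatorial part: one has to verify that for every configuration of medians and dominant $A^\pm$ a compatible pair $(E,F)$ exists satisfying (1) and (2) simultaneously. This is the delicate median argument of Lerner--Ombrosi--Rivera--R\'ios, and the ``slight modification'' referenced in the text is a boundary refinement of that step allowing $\la$ to range over the whole interval $(0,1)$ rather than only $(0,1/2]$.
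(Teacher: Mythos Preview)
Your geometric component is essentially the paper's argument: approximate continuity (equivalently, a Lebesgue density point) of $\Om$ on $\bbS^{n-1}$ fixes $\ep_0$ and the direction $\theta_0$, and the choice $c_P=c_Q-k_0l_Q\theta_0$ with $k_0$ large forces $(x-y)/|x-y|$ into a small cap for all $(x,y)\in Q\times P$. The Fubini estimate for $G$ is also correct and matches the paper's bound $|(Q\times P)\cap N|\leq\frac{\la}{8}|Q|^2$.

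The combinatorial component, however, has a genuine gap. Your four-case split on the sign of $m-m'$ and on which of $|A^{\pm}|$ dominates does \emph{not} close. Consider the case $m\geq m'$ with $|A^{+}|<\frac{\la}{2}|Q|$, so that you are forced to take $E\subset A^{-}=\{b\leq m-a_\la(b;Q)\}$. Pairing with $F\subset B^{+}=\{b\geq m'\}$ gives only
\[
b(y)-b(x)\geq m'-\bigl(m-a_\la(b;Q)\bigr)=a_\la(b;Q)-(m-m'),
\]
which can be arbitrarily small or negative when $m-m'$ is large; pairing with $F\subset B^{-}$ gives no sign control at all, since both $b(x)\leq m-a_\la(b;Q)$ and $b(y)\leq m'$ lie on the same side. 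The symmetric case $m\leq m'$, $|A^{-}|<\frac{\la}{2}|Q|$ fails for the same reason. So ``the remaining sub-cases follow by the symmetric choice'' is false.

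The paper (following Lerner--Ombrosi--Rivera-R\'ios) avoids this obstruction by using a \emph{single} reference constant, namely $m_b(P)$, for both cubes. The point is that for \emph{any} constant $c$ one has
\[
\bigl|\{x\in Q:\,|b(x)-c|\geq ((b-c)\chi_Q)^*(\la|Q|)\}\bigr|\geq \la|Q|,
\qquad
((b-c)\chi_Q)^*(\la|Q|)\geq \widetilde a_\la(b;Q),
\]
so taking $c=m_b(P)$ already produces a set $\tQ\subset Q$ of measure $\la|Q|$ on which $|b(x)-m_b(P)|$ is large. Splitting $\tQ$ according to the sign of $b(x)-m_b(P)$ and pairing with the matching half of $P$ (which exists by the definition of median) then gives (1) and (2) in one stroke, with no comparison between $m_b(Q)$ and $m_b(P)$ ever needed. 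Replace your $A^{\pm}$ by $\{x\in Q:\,\pm(b(x)-m_b(P))\geq \widetilde a_\la(b;Q)\}$ and the argument goes through.
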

\begin{proof}
Without loss of generality, assume $\Omega$ is nonnegative on an open set of $\bbS^{n-1}$.
By the assumption of $\Om$, there exists a point $\th_0$ of approximate continuity of $\Omega$ such that
$\Omega(\th_0)=2\ep_0$ for some $\ep_0>0$ (see\cite[pp.46-47]{EvansGariepy92} for the definition of approximate continuity).
It follows from the definition of approximate continuity that for every
$\b\in (0,1)$, there exists a small constant $r_{\b}$ such that
\be
\s(\{\th\in B(\th_0,r_{\b})\cap \bbS^{n-1}: \Omega(\th)\geq \ep_0\})\geq (1-\b)\s(B(\th_0,r_{\b})\cap \bbS^{n-1})
\ee
Let $\G_{\b}$ be the cone containing all $x\in \bbR^n$ such that $x'\in B(\th_0,r_{\b})\cap \bbS^{n-1}$.

There exists a vector $v_{\b}=\frac{c_n\th_0}{r_{\b}}$, such that
\be
2Q_0+v_{\b}\in \G_{\b}.
\ee
For a cube fixed $Q$, we set
\be
P_{\b}:=Q-l_Q v_{\b},\  N:=\{x,y\in \bbR^n: \Om(x-y)<\ep_0\},\  N_x:=\{y\in \bbR^n: \Om(x-y)<\ep_0\}.
\ee
Here and in what follows, for any point $x_0\in\rn$ and sets $E, F\subset\rn$,
$E+x_0:=\{y+x_0: y\in E\}$ and $E-F:=\{x-y: x\in E, y\in F\}$.
Thus, $|c_Q-c_{P_{\b}}|=\frac{c_n}{r_{\b}}l_Q$.
Observe that $Q-P_{\b}\subset 2l_Q Q_0+l_Q v_{\b}$, we obtain
\be
\begin{split}
  |(Q\times P_{\b})\cap N|
  =
  \int_Q |P_{\beta}\cap N_x|dx
  = &
  \int_Q |(P_{\beta}-x)\cap N_0|dx
  \\
  \leq &
  \int_Q |(P_{\beta}-Q)\cap N_0|dx
  \\
  = &
  |Q|\cdot|(P_{\beta}-Q)\cap N_0|
  \\
  \leq &
  |Q|\cdot l_Q^n|(2Q_0+r_{\b})\cap N_0|
    \\
  \leq &
  |Q|\cdot l_Q^n\cdot c_n\cdot |v_{\b}|^{n-1}\cdot \b r_{\b}^{n-1}
  \leq c_n\b|Q|^2.
\end{split}
\ee
Take $\b=\b_0$ sufficiently small such that
\be
|(Q\times P_{\b_0})\cap N|\leq \frac{\la|Q|^2}{8},\ \    k_0:=\frac{c_n}{r_{\b_0}}>10\sqrt{n}.
\ee

  By the definition of $\tilde a_{\la}(f;Q)$, there exists a subset $\tQ$ of $Q$, such that $|\tQ|=\la|Q|$ and
  \be
  {a}_{\lambda}(b;Q)\leq |b(x)-m_b(P_{\b_0})|.
  \ee
  Then, by the definition of $m_b(P_{\b_0})$,
  there exists subsets $E\subset \tQ$ and $F\subset P_{\b_0}$ such that
  \be
  |E|=|\tQ|/2=\la|Q|/2,\ |F|=|P_{\b_0}|/2=|Q|/2,
  \ee
  and
  \be
  {a}_{\lambda}(b;Q)\leq |b(x)-b(y)|
  \ee
  for $x\in E, y\in F$, $b(x)-b(y)$ does not change sign in $E\times F$.
  Note $|E\times F|=\frac{\la |Q|^2}{4}$.
  The desired set can be chosen by
  \be
  G=E\times F\bs ((Q\times P_{\b_0})\cap N).
  \ee
  We have now completed this proof.
\end{proof}
\begin{proposition}\label{proposition, lower estimates}
 Let $\om\in \apq$, $\lambda\in(0, 1)$ and $b$ be a real-valued measurable function.
 For a given cube $Q$, let $P,E,F,G$ be the sets associated with $Q$ mentioned in Proposition \ref{proposition, key lower estimates}.
Then there exists a positive constant $C$ is independent of $Q$ such that for $f:=(\int_{F}\om(x)^pdx)^{-1/p}\chi_F$,
 and any measurable set $B$ with $|B|\leq \frac{\la}{8}|Q|$,
  \be
  \|(T_{\Omega,\,\al})_b^m(f)\|_{L^q(E\bs B,\om^q)}\ge C{a}_{\lambda}(b;Q)^m.
  \ee
\end{proposition}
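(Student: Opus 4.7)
The plan is to first derive the closed-form representation of the iterated commutator and then exploit, via Proposition \ref{proposition, key lower estimates}, a pointwise lower bound on a suitable subset of $E\bs B$. By induction from \eqref{e-itera com}, one checks that
\[
(T_{\Omega,\alpha})_b^m f(x) = \int_{\mathbb{R}^n} (b(x)-b(y))^m \, \frac{\Omega(x-y)}{|x-y|^{n-\alpha}} \, f(y)\,dy.
\]
Substituting $f$, the domain reduces to $F$. Since $|c_Q-c_P| = k_0\,l_Q$ with $k_0>10\sqrt n$, for $(x,y)\in E\times F$ one has $|x-y|\sim l_Q$ with constants depending only on $n$ and $k_0$. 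On $E\times F$, Proposition \ref{proposition, key lower estimates} yields that $b(x)-b(y)$ and $\Omega((x-y)/|x-y|)$ both keep constant signs, hence so does $(b(x)-b(y))^m\,\Omega((x-y)/|x-y|)$; consequently the absolute value can be moved inside the integral. Restricting further to $y\in G_x := \{y\in F:(x,y)\in G\}$ and using $|\Omega|\ge\varepsilon_0$ and $|b(x)-b(y)|\ge a_\lambda(b;Q)$ on $G$, one arrives at
\[
|(T_{\Omega,\alpha})_b^m f(x)|\;\ge\; C\,\omega^p(F)^{-1/p}\,a_\lambda(b;Q)^m\,l_Q^{\alpha-n}\,|G_x|,\qquad x\in E.
\]

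Next, I find a large subset of $E\bs B$ on which $|G_x|$ is itself of order $|Q|$. Setting $E_0:=\{x\in E:|G_x|\ge c_1|Q|\}$ for a sufficiently small absolute constant $c_1>0$, the Chebyshev-type inequality
\[
\tfrac{\lambda|Q|^2}{8} \;\le\; |G| \;\le\; |E_0|\cdot|F| + (|E|-|E_0|)\cdot c_1|Q|
\]
gives $|E_0|\gtrsim\lambda|Q|$, and by choosing $c_1$ small enough one also obtains $|E_0\bs B|\gtrsim\lambda|Q|$ (using the hypothesis $|B|\le\lambda|Q|/8$). Thus on $E_0\bs B$ the pointwise bound improves to
\[
|(T_{\Omega,\alpha})_b^m f(x)|\;\ge\; C\,\omega^p(F)^{-1/p}\,a_\lambda(b;Q)^m\,|Q|^{\alpha/n}.
\]

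Raising to the $q$-th power and integrating against $\omega^q$ over $E_0\bs B\subset E\bs B$ yields
\[
\|(T_{\Omega,\alpha})_b^m f\|_{L^q(E\bs B,\omega^q)}\;\ge\; C\,a_\lambda(b;Q)^m\cdot\omega^p(F)^{-1/p}\,\omega^q(E_0\bs B)^{1/q}\,|Q|^{\alpha/n}.
\]
By Lemma \ref{l-Apq weigh prop}, $\omega^q\in A_q$, and since $|E_0\bs B|/|Q|\gtrsim\lambda$, Lemma \ref{l-Ap weight prop}(i) yields $\omega^q(E_0\bs B)\gtrsim\omega^q(Q)$. Similarly, $F\subset P\subset(2k_0+1)Q$ together with $\omega^p\in A_p$ and Lemma \ref{l-Ap weight prop}(ii) gives $\omega^p(F)\lesssim\omega^p(Q)$. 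Finally, since $p\le q$ (because $\alpha\ge0$ and $1/q=1/p-\alpha/n$), H\"older's inequality with exponents $q/p$ and $q/(q-p)$ produces
\[
\omega^p(Q)^{1/p}\;=\;\Bigl(\int_Q\omega^p\Bigr)^{1/p}\;\le\;\omega^q(Q)^{1/q}\,|Q|^{\alpha/n},
\]
so that $\omega^p(F)^{-1/p}\omega^q(Q)^{1/q}|Q|^{\alpha/n}\gtrsim 1$. Assembling the estimates produces the required lower bound with constant $C$ independent of $Q$.

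The expected main obstacle is the precise weight-scaling identity above: it is this cancellation of the only $Q$-dependent factor that makes the constant truly $Q$-independent, and it is exactly the place where the $A_{p,q}$-hypothesis (through $\omega^p\in A_p$, $\omega^q\in A_q$, and the scaling relation $1/q=1/p-\alpha/n$) is used in a nontrivial way. The pointwise estimate and the Chebyshev extraction of $E_0$ are otherwise routine consequences of Proposition \ref{proposition, key lower estimates}.
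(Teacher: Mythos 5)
Your argument is correct, but it reaches the conclusion by a genuinely different mechanism than the paper. The paper never produces a pointwise lower bound: it bounds the \emph{unweighted} integral $\int_{E\bs B}|(T_{\Omega,\al})_b^m f|\,dx$ from below directly, using the full measure estimate $|((E\bs B)\times F)\cap G|\ge |G|-|B||F|\ge \tfrac{\la}{16}|Q|^2$ (no Chebyshev selection), and then passes to the weighted $L^q$ norm by H\"older with the factorization $1=\om\cdot\om^{-1}$, which brings in the dual integral $\bigl(\int_Q\om^{-q'}\bigr)^{1/q'}$; the $Q$-dependence is cancelled at the end via the $A_{p,q}$ condition together with the comparisons $\bigl(\tfrac1{|Q|}\int_Q\om^p\bigr)^{1/p}\le\bigl(\tfrac1{|Q|}\int_Q\om^q\bigr)^{1/q}$ and $\bigl(\tfrac1{|Q|}\int_Q\om^{-q'}\bigr)^{1/q'}\le\bigl(\tfrac1{|Q|}\int_Q\om^{-p'}\bigr)^{1/p'}$. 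You instead extract, by Chebyshev, a subset $E_0\bs B\subset Q$ of measure $\gtrsim\la|Q|$ on which $|G_x|\gtrsim|Q|$, obtain a genuine pointwise lower bound there, and integrate $\om^q$ directly, closing with the $A_\infty$-type absolute continuity of $\om^q$ (Lemma \ref{l-Ap weight prop}(i)) and the scaling $\om^p(Q)^{1/p}\le\om^q(Q)^{1/q}|Q|^{\al/n}$. Your route is slightly longer but yields more: the pointwise bound on a set of proportional measure would survive replacing $L^q(\om^q)$ by other function norms, whereas the paper's duality trick is tied to the Lebesgue-space structure and is marginally shorter. Both proofs use the doubling of $\om^p$ to compare $\om^p(F)$ (resp.\ $\om^p(P)$) with $\om^p(Q)$, and both constants depend on $\la$, $[\om]_{\apq}$, $\ep_0$, $k_0$, $m$, $n$ but not on $Q$, as required.
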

\begin{proof}
  Using H\"{o}lder's inequality, we obtain
  \be
  \begin{split}
  \int_{E\bs B}|(T_{\Omega,\,\alpha})_b^m(f)(x)|dx
  = &
  \int_{E\bs B}|(T_{\Omega,\,\alpha})_b^m(f)(x)|\om(x)\cdot \om(x)^{-1}dx
  \\
  \leq &
  \left(\int_{E\bs B}|(T_{\Omega,\,\alpha})_b^m(f)(x)|^q\om^q(x)dx\right)^{1/q}\left(\int_{Q}\om^{-q'}(x)dx\right)^{1/q'}.
  \end{split}
  \ee
On the other hand, by Proposition \ref{proposition, key lower estimates},
  \be
  \begin{split}
  \int_{E\bs B}|(T_{\Omega,\,\alpha})_b^m(f)(x)|dx
  = &
  \left(\int_{F}\om(x)^pdx\right)^{-1/p}\cdot \int_{E\bs B}\left|\int_{F}(b(x)-b(y))^m\frac{\Om(x-y)}{|x-y|^{n-\al}}dy\right|dx
  \\
  = &
  \left(\int_{F}\om(x)^pdx\right)^{-1/p}\cdot \int_{E\bs B}\int_{F}|b(x)-b(y)|^m\frac{|\Om(x-y)|}{|x-y|^{n-\al}}dydx
  \\
  \geq &
  \left(\int_{F}\om(x)^pdx\right)^{-1/p}\cdot \int_{((E\bs B)\times F)\cap G}|b(x)-b(y)|^m\frac{|\Om(x-y)|}{|x-y|^{n-\al}}dxdy
  \\
  \gtrsim &
  \left(\int_{P}\om(x)^pdx\right)^{-1/p}\cdot |((E\bs B)\times F)\cap G|\cdot {a}_{\lambda}(b;Q)^m|Q|^{-1+n/\al}\ep_0.
  \end{split}
  \ee
Combining this and the fact
\be
|((E\bs B)\times F)\cap G|\geq |G|-|B||F|\geq \frac{\la}{8}|Q|^2-\frac{\la}{8}|Q|\cdot \frac{|Q|}{2}\geq \frac{\la}{16}|Q|^2,
\ee
we obtain
  \be
  \begin{split}
  \int_{E\bs B}|(T_{\Omega,\,\alpha})_b^m(f)(x)|dx
  \gtrsim
  \left(\int_{P}\om(x)^pdx\right)^{-1/p}\cdot {a}_{\lambda}(b;Q)^m|Q|^{1+n/\al}\ep_0.
  \end{split}
  \ee
This and the first estimate of this proof yields that
\begin{eqnarray}\label{proof, 4}
  &&\left(\int_{E\bs B}|(T_{\Omega,\,\alpha})_b^m(f)(x)|^q\om^q(x)dx\right)^{1/q}\nonumber\\
&& \quad \geq
  \left(\int_{Q}\om^{-q'}(x)dx\right)^{-1/q'}\cdot \left(\int_{P}\om^p(x)dx\right)^{-1/p}\cdot {a}_{\lambda}(b;Q)^m|Q|^{1+n/\al}\ep_0.
\end{eqnarray}

It follows from the fact $P\subset 4k_0Q$,
the definition of $\apq$ and the H\"{o}lder inequality that
\begin{eqnarray*}
 && \left(\frac{1}{|Q|}\int_{P}\om^p(x)dx\right)^{1/p}\left(\frac{1}{|Q|}\int_{Q}\om^{-q'}(x)dx\right)^{1/q'}\\
 && \quad\lesssim
  \left(\frac{1}{|Q|}\int_{Q}\om^p(x)dx\right)^{1/p}\left(\frac{1}{|Q|}\int_{Q}\om^{-q'}(x)dx\right)^{1/q'}\\
 && \quad\leq
  \left(\frac{1}{|Q|}\int_{Q}\om^q(x)dx\right)^{1/q}\left(\frac{1}{|Q|}\int_{Q}\om^{-p'}(x)dx\right)^{1/p'}\lesssim 1.
\end{eqnarray*}
This and (\ref{proof, 4}) yields that
\be
\left(\int_{E\bs B}|(T_{\Omega,\,\alpha})_b^m(f)(x)|^q\om^q(x)dx\right)^{1/q}
\gtrsim
{a}_{\lambda}(b;Q)^m,
\ee
which completes this proof.
\end{proof}
As a direct conclusion of Proposition \ref{proposition, lower estimates}, following is an extension of Theorem 1.1 (ii) in \cite{LernerOmbrosiRivera17arxiv}. Here, we only state the unweighted BMO case for our further proof, the weighted BMO case is still valid.
\begin{corollary}\label{corollary, necessity}
  Let $\om\in \apq$, $1<p,q<\infty$, $0\leq \al<n$, $1/q=1/p-\al/n$, $m\in \mathbb{Z}^+$.
  Let $\Om$ be a measurable function on $\bbS^{n-1}$, which does not change sign and is not equivalent to zero
  on some open subset of $\bbS^{n-1}$. If $(T_\al)_b^m$ is a bounded operator from $L^p(\om^p)$ to $L^q(\om^q)$, then
  $b\in \bmo$.
\end{corollary}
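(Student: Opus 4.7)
The plan is to derive $b\in\bmo$ from the pointwise lower bound in Proposition \ref{proposition, lower estimates} by a standard test-function argument, and then convert the resulting uniform bound on local mean oscillations into the BMO estimate via Lemma \ref{l-local mean osci equiv}. Following the paper's convention, I can assume $b$ is real-valued: for $m\ge 2$ this is already built into the hypothesis, while for $m=1$ the real-linearity of $T_{\Om,\al}$ (since $\Om$ is real-valued) together with Remark \ref{r-bmo norm equiv re-im} lets me split $b=\mathrm{Re}(b)+i\,\mathrm{Im}(b)$ and apply the real-valued argument to each component separately.

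First I would fix $\la\in(0,1/2]$ so that Lemma \ref{l-local mean osci equiv}(ii) is applicable, and take an arbitrary cube $Q\subset\rn$. I invoke Proposition \ref{proposition, key lower estimates} to produce the associated cube $P$ and the sets $E,F,G$, then define the natural test function
\be
f:=\Bigl(\int_F \om(x)^p\,dx\Bigr)^{-1/p}\chi_F,
\ee
which is normalized so that $\|f\|_{L^p(\om^p)}=1$. Applying Proposition \ref{proposition, lower estimates} with $B=\emptyset$ and combining with the assumed operator norm of $(T_{\Om,\al})_b^m\colon L^p(\om^p)\to L^q(\om^q)$ gives
\be
C\,a_\la(b;Q)^m \leq \|(T_{\Om,\al})_b^m(f)\|_{L^q(E,\om^q)} \leq \|(T_{\Om,\al})_b^m(f)\|_{L^q(\om^q)} \leq \|(T_{\Om,\al})_b^m\|_{L^p(\om^p)\to L^q(\om^q)}.
\ee

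Taking the supremum over all cubes $Q$ and extracting the $m$-th root will then yield the uniform estimate
\be
\sup_{Q\subset\rn} a_\la(b;Q) \ls \|(T_{\Om,\al})_b^m\|^{1/m}_{L^p(\om^p)\to L^q(\om^q)}<\fz.
\ee
To conclude, I invoke Lemma \ref{l-local mean osci equiv}(i) to get $\widetilde{a}_\la(b;Q)\leq a_\la(b;Q)$ for each $Q$, and then Lemma \ref{l-local mean osci equiv}(ii) upgrades this to $\|b\|_{\bmo}\ls \sup_Q \widetilde{a}_\la(b;Q)<\fz$, proving $b\in\bmo$. There is no genuine obstacle in this argument: all the analytic work is already packaged inside Proposition \ref{proposition, lower estimates}, and the only point to keep in mind is the choice $\la\in(0,1/2]$, which is compatible with the range $\la\in(0,1)$ in Proposition \ref{proposition, key lower estimates} and simultaneously activates the John--Str\"{o}mberg equivalence in Lemma \ref{l-local mean osci equiv}.
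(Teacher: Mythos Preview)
Your argument is correct and matches the paper's intended approach: the paper states the corollary as ``a direct conclusion of Proposition \ref{proposition, lower estimates}'' without further details, and you have supplied exactly those details---apply the lower bound with $B=\emptyset$ to the normalized test function, use the assumed operator bound, and convert the uniform control on $a_\la(b;Q)$ into a BMO bound via Lemma \ref{l-local mean osci equiv}. The reduction to real-valued $b$ and the choice $\la\in(0,1/2]$ are handled appropriately.
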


\subsection{Upper estimates}\label{s4.2}

\begin{proposition}\label{proposition, upper estimates}
Let $m\in \mathbb{Z}^+$, $b\in \bmo$, $\Om\in L^{\infty}(\bbS^{n-1})$, $\om\in \apq$.
For a cube $Q$, denote by $P,E,F,G$ the sets associated with $Q$ mentioned in Proposition \ref{proposition, key lower estimates}.
Let $f:=(\int_{F}\om(x)^pdx)^{-1/p}\chi_F$.
Then, there exists a positive constant $\d$ such that
\be
\|(T_{\Omega,\,\alpha})_b^m(f)\|_{L^q(2^{d+1}Q\bs 2^dQ, \om^q)}
\lesssim
2^{-\d dn/p}d^m.
\ee
for sufficient large $d$, where the implicit constant is independent of $d$ and $Q$.
\end{proposition}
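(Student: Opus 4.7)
The starting point is the pointwise representation, valid outside the support of $f$,
\be
(T_{\Om,\al})_b^m(f)(x) = \int_F (b(x)-b(y))^m \frac{\Om(x-y)}{|x-y|^{n-\al}} f(y)\,dy.
\ee
Since $f$ is supported in $F\subset P\subset 4k_0 Q$, whenever $d$ is large enough that $2^{d-1}\geq 4k_0$, all points $x\in 2^{d+1}Q\bs 2^dQ$ and $y\in F$ satisfy $|x-y|\sim 2^dl_Q$, and the kernel contributes a factor $\ls (2^dl_Q)^{-(n-\al)}$.

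For the symbol factor, I would decompose, for $y\in F\subset P$,
\be
b(x)-b(y) = (b(x)-b_{2^{d+1}Q}) + (b_{2^{d+1}Q}-b_P) + (b_P-b(y)),
\ee
where $|b_{2^{d+1}Q}-b_P|\ls d\|b\|_{\bmo}$ by the standard BMO telescoping across $\sim d$ dyadic scales. Expanding the $m$-th power via the multinomial theorem yields a factor $\ls d^m\|b\|_{\bmo}^m$ together with cross terms of the form $|b(x)-b_{2^{d+1}Q}|^{m_1}|b(y)-b_P|^{m_3}$ with $m_1+m_3\leq m$.

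To estimate the $L^q(\om^q)$ norm in $x$ on the annulus, note that $\om^q\in A_\fz$ by Lemma \ref{l-Apq weigh prop} and Lemma \ref{l-Ap weight prop} (iii); weighted John--Nirenberg then yields
\be
\left(\int_{2^{d+1}Q}|b(x)-b_{2^{d+1}Q}|^{m_1 q}\om^q\,dx\right)^{1/q} \ls \|b\|_{\bmo}^{m_1}\,\om^q(2^{d+1}Q)^{1/q}.
\ee
For the $y$-integral, I would insert $\om\cdot\om^{-1}$ and apply H\"older with exponents $p,p'$, together with $\|f\|_{L^p(\om^p)}=1$, to obtain
\be
\int_F|b-b_P|^{m_3}f\,dy \leq \left(\int_F|b-b_P|^{m_3p'}\om^{-p'}\,dy\right)^{1/p'} \ls \|b\|_{\bmo}^{m_3}\,\om^{-p'}(P)^{1/p'},
\ee
where the last step again uses John--Nirenberg, now for $\om^{-p'}\in A_{p'}\subset A_\fz$ (Lemma \ref{l-Apq weigh prop} (i)).

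Assembling the pieces, the target norm is controlled by a constant multiple of
\be
(2^dl_Q)^{-(n-\al)}\,d^m\,\om^q(2^{d+1}Q)^{1/q}\om^{-p'}(P)^{1/p'}.
\ee
The \apq\ condition applied to $2^{d+1}Q$ gives $\om^q(2^{d+1}Q)^{1/q}\om^{-p'}(2^{d+1}Q)^{1/p'}\ls |2^{d+1}Q|^{1-\al/n}=2^{(d+1)(n-\al)}l_Q^{n-\al}$, which cancels the kernel scaling. The decay is produced by the reverse H\"older inequality (Lemma \ref{l-Ap weight prop} (iv)) for $\om^{-p'}\in A_\fz$: there exists $s>1$ such that, since $|P|=|Q|$,
\be
\om^{-p'}(P)^{1/p'}\ls \left(\frac{|P|}{|2^{d+1}Q|}\right)^{1/(sp')}\om^{-p'}(2^{d+1}Q)^{1/p'}\sim 2^{-(d+1)n/(sp')}\,\om^{-p'}(2^{d+1}Q)^{1/p'}.
\ee
Setting $\d:=p/(sp')>0$ then delivers the claimed bound $\ls 2^{-\d dn/p}d^m$. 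The main obstacle is orchestration: the polynomial factor $d^m$ from BMO telescoping and the geometric decay $2^{-\d dn/p}$ from reverse H\"older must be extracted simultaneously. This forces the weighted H\"older split to be oriented so that $\om^q$ pairs with the large $x$-integral on $2^{d+1}Q$ and $\om^{-p'}$ pairs with the small set $P$, so that the \apq\ duality cancels the kernel scaling and the reverse H\"older estimate on $\om^{-p'}$ delivers the needed decay.
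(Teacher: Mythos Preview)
Your argument is correct and reaches the same conclusion as the paper, but the route differs in two technical choices. First, the paper splits $b(x)-b(y)=(b(x)-b_P)+(b_P-b(y))$ and only later telescopes $|b_P-b_{2^{d+v}P}|\ls d\|b\|_{\bmo}$ when estimating the $x$-integral; it then separates the weight from the BMO oscillation by applying the reverse H\"older inequality to $\om^q$ and uses the \emph{unweighted} John--Nirenberg inequality, whereas you invoke the weighted John--Nirenberg inequality for $A_\infty$ weights directly on both the $x$- and $y$-integrals. Your route is a bit cleaner here, at the cost of importing a lemma the paper does not state. Second, the paper extracts the geometric decay from the $A_p$-openness $\om^p\in A_{p-\d}$ (Lemma~\ref{l-Ap weight prop}(v)) via the improved doubling exponent for $\om^p$, while you extract it from the reverse H\"older inequality for $\om^{-p'}\in A_\infty$ by comparing $\om^{-p'}(P)$ to $\om^{-p'}(2^{d+1}Q)$; these are equivalent self-improvement phenomena applied to dual weights. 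One small correction: the H\"older/reverse H\"older computation gives $\om^{-p'}(P)\ls (|P|/|2^{d+1}Q|)^{1/s'}\,\om^{-p'}(2^{d+1}Q)$ with $s'=s/(s-1)$, so the exponent in your displayed estimate should be $1/(s'p')$ rather than $1/(sp')$, and correspondingly $\d=p/(s'p')$; this does not affect the argument.
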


\begin{proof}
Since $\om^p\in A_p$ and $F\subset P$ with $|F|=\frac{1}{2}|P|$, we have $\om^p(F)\gtrsim \om^p(P)$
\be
f(x)=\left(\int_{F}\om(x)^pdx\right)^{-1/p}\chi_F(x)\lesssim \left(\int_{P}\om(x)^pdx\right)^{-1/p}\chi_P(x)
\ee

A direct calculation yields that
\be
\begin{split}
  |(T_{\Omega,\,\alpha})_b^m(f)(x)|
  \lesssim &
  \left(\int_{P}\om(y)^pdy\right)^{-1/p}\int_{P}|b(x)-b(y)|^m\frac{|\Omega(x-y)|}{|x-y|^{n-\alpha}}dy
  \\
  = &
  \left(\int_{P}\om(y)^pdy\right)^{-1/p}\int_{P}|b(x)-b_P+b_P-b(y)|^m\frac{|\Omega(x-y)|}{|x-y|^{n-\alpha}}dy
  \\
  \leq &
  \left(\int_{P}\om(y)^pdy\right)^{-1/p}\sum_{i+j=m}C_m^i|b(x)-b_P|^i\int_{P}|b_P-b(y)|^j\frac{|\Omega(x-y)|}{|x-y|^{n-\alpha}}dy.
\end{split}
\ee
Recall the equivalent norm of $\bmo$
\be
\|b\|_{\bmo}\sim \sup\limits_{\tQ}\left(\int_{\tQ}|b(y)-b_{\tQ}|^jdy\right)^{1/j},\ \  (1\leq j<\infty).
\ee
For $x\in 2^{d+1}Q\bs 2^dQ$, observe that $|x-y|\sim 2^dl_Q$ for $y\in P$, we deduce that
\be
\begin{split}
  \int_{P}|b_P-b(y)|^j\frac{|\Omega(x-y)|}{|x-y|^{n-\alpha}}dy
\lesssim &
\frac{\|\Om\|_{L^{\infty}(\bbS^{n-1})}}{2^{d(n-\al)}|P|^{1-n/\al}}\int_{P}|b_P-b(y)|^jdy
\\
= &
\frac{\|\Om\|_{L^{\infty}(\bbS^{n-1})}}{2^{d(n-\al)}|P|^{-n/\al}}\frac{1}{|P|}\int_{P}|b_P-b(y)|^jdy
\\
\lesssim &
\frac{\|\Om\|_{L^{\infty}(\bbS^{n-1})}}{2^{d(n-\al)}|P|^{-n/\al}}\|b\|_{\bmo}^j.
\end{split}
\ee
Since $\om^q\in A^q$, there exists a small positive constant  $\ep\leq \ep_n/[\om^q]_{A_{\infty}}$,
such that
\be
\left(\frac{1}{|\tQ|}\int_{\tQ}\om^{q(1+\ep)}(x)dx\right)^{\frac{1}{1+\ep}}\leq \frac{2}{|\tQ|}\int_{\tQ}\om^q(x)dx\ \ \text{for all cubes}\  \tQ.
\ee
From this and the H\"{o}lder inequality, we obtain
\be
\begin{split}
  &\left\||b(\cdot)-b_P|^i\right\|_{L^q(2^{d+1}Q\bs 2^dQ, \om^q)}\\
  &\quad\leq
  \left\||b(\cdot)-b_P|^i\right\|_{L^q(2^{d+1}Q, \om^q)}
  \\
  &\quad\leq
  \left(\int_{2^{d+v}P}|b(x)-b_P|^{iq}\om^q(x)dx\right)^{1/q}
  \\
  &\quad\leq
  |2^{d+v}P|^{1/q}\left(\frac{1}{|2^{d+v}P|}\int_{2^{d+v}P}|b(x)-b_P|^{iq(1+\ep)'}dx\right)^{\frac{1}{q(1+\ep)'}}
  \left(\frac{1}{|2^{d+v}P|}\int_{2^{d+v}P}\om(x)^{q(1+\ep)}dx\right)^{\frac{1}{q(1+\ep)}}
  \\
 &\quad \lesssim
  |2^{d+v}P|^{1/q}\left(\frac{1}{|2^{d+v}P|}\int_{2^{d+v}P}|b(x)-b_P|^{iq(1+\ep)'}dx\right)^{\frac{1}{q(1+\ep)'}}
  \left(\frac{1}{|2^{d+v}P|}\int_{2^{d+v}P}\om(x)^{q}dx\right)^{\frac{1}{q}},
  \end{split}
\ee
where $v$ is a positive constant independent of $Q$ such that $2Q\subset 2^vP$.
By the fact
\be
|b_P-b_{2^{d+v}P}|\leq (d+v)\cdot 2^n\|b\|_{\bmo},
\ee
we have
\be
\begin{split}
  &\left(\frac{1}{|2^{d+v}P|}\int_{2^{d+v}P}|b(x)-b_P|^{iq(1+\ep)'}dx\right)^{\frac{1}{q(1+\ep)'}}
  \\
 & \quad\lesssim
  d^i+\left(\frac{1}{|2^{d+v}P|}\int_{2^{d+v}P}|b(x)-b_{2^{d+v}P}|^{iq(1+\ep)'}dx\right)^{\frac{1}{q(1+\ep)'}}
  \lesssim d^i+\|b\|_{\bmo}^i\lesssim d^i.
\end{split}
\ee
Combining this with the previous estimate yields
\be
\begin{split}
  &\|(T_{\Omega,\,\alpha})_b^m(f)\|_{L^q(2^{d+1}Q\bs 2^dQ, \om^q)}\\
& \quad \lesssim
  \frac{|2^{d+v}P|^{1/q}d^i}{2^{d(n-\al)}|P|^{-n/\al}}\left(\frac{1}{|2^{d+v}P|}\int_{2^{d+v}P}\om(x)^{q}dx\right)^{\frac{1}{q}}
  \left(\int_{P}\om(x)^pdx\right)^{-1/p}
  \\
 & \quad\lesssim
  2^{dn(1/q-1+\al/n)}d^i\left(\frac{1}{|2^{d+1}P|}\int_{2^{d+1}P}\om(x)^{q}dx\right)^{\frac{1}{q}}\left(\frac{1}{|P|}\int_{P}\om(x)^pdx\right)^{-1/p}.
\end{split}
\ee
By the property of $A_p$, there exists a small constant $\d>0$, such that $\om^p\in A_{p-\delta}$.
This and the doubling property of $A_{p-\delta}$ yields that
\be
\int_{2^{d+v}P}\om(x)^pdx\leq 2^{(d+v)n(p-\d)}[\om^p]_{A_{p-\d}}\int_{P}\om(x)^{p}dx,
\ee
which implies
\be
\left(\frac{1}{|P|}\int_{P}\om(x)^pdx\right)^{-1/p}
\lesssim
2^{-dn/p}2^{dn(1-\d/p)}\left(\frac{1}{|2^{d+1}P|}\int_{2^{d+1}P}\om(x)^{p}dx\right)^{-1/p}.
\ee
Thus,
\be
\begin{split}
  &\|(T_{\Omega,\,\alpha})_b^m(f)\|_{L^q(2^{d+1}Q\bs 2^dQ, \om^q)}
  \\
 & \quad\lesssim
  2^{dn(1/q-1+\al/n)}d^i2^{-dn/p}2^{dn(1-\d/p)}
  \left(\frac{1}{|2^{d+1}P|}\int_{2^{d+1}P}\om(x)^{q}dx\right)^{\frac{1}{q}}\left(\frac{1}{|2^{d+1}P|}\int_{2^{d+1}P}\om(x)^{p}dx\right)^{-1/p}
  \\
 & \quad\lesssim
  2^{-\d dn/p}d^i\left(\frac{1}{|2^{d+1}P|}\int_{2^{d+1}P}\om(x)^{q}dx\right)^{\frac{1}{q}}\left(\frac{1}{|2^{d+1}P|}\int_{2^{d+1}P}\om(x)^{p}dx\right)^{-1/p}
\end{split}
\ee
By the definition of $A_{p,q}$, we obtain
\be
\left(\frac{1}{|2^{d+1}P|}\int_{2^{d+1}P}\om(x)^qdx\right)^{1/q}\left(\frac{1}{|2^{d+1}P|}\int_{2^{d+1}P}\om(x)^{-p'}dx\right)^{1/p'}\lesssim 1.
\ee
This together with following ineqaulity
\be
1\lesssim \left(\frac{1}{|2^{d+1}P|}\int_{2^{d+1}P}\om(x)^{-p'}dx\right)^{1/p'}\left(\frac{1}{|2^{d+1}P|}\int_{2^{d+1}P}\om(x)^{p}dx\right)^{1/p}
\ee
yields that
\be
\left(\frac{1}{|2^{d+1}P|}\int_{2^{d+1}P}\om(x)^{q}dx\right)^{\frac{1}{q}}\left(\frac{1}{|2^{d+1}P|}\int_{2^{d+1}P}\om(x)^{p}dx\right)^{-1/p}\lesssim 1.
\ee
Thus, we get the desired estimate
\be
\|(T_{\Omega,\,\alpha})_b^m(f)\|_{L^q(2^{d+1}Q\bs 2^dQ, \om^q)}
\lesssim
2^{-\d dn/p}d^i
\lesssim
2^{-\d dn/p}d^m.
\ee
\end{proof}
\subsection{Proof of Theorem \ref{theorem, necessity}}\label{s4.3}

We only need to deal with the case that $b$ is real-valued.
If $(T_{\Omega,\,\alpha})_b^{m}$ is a compact operator from $L^p(\om^p)$ to $L^q(\om^q)$, then from Corollary \ref{corollary, necessity}, $b\in \bmo$.
To show $b\in\cmo$, we use a contradiction
argument via Theorem \ref{theorem, new characterization of CMO}. Observe that if $b\notin \cmo$,  $b$ does not satisfy at least one
of (1)-(3) in Theorem \ref{theorem, new characterization of CMO}. We further consider the following three cases.

First suppose that $b$ does not satisfies condition (1) in Theorem \ref{theorem, new characterization of CMO}.
There exist $\th_0>0$ and a sequence of cubes $\{Q_j\}_{j=1}^{\infty}$ with $|Q_j|\searrow 0$ as $j\rightarrow \infty$, such that
\be
{a}_{\lambda}(b,Q_j)\geq \th_0.
\ee
Given a cube $Q$, let $E$, $F$ be the cubes mentioned in Proposition \ref{proposition, key lower estimates}.
Let $f=(\int_{F}\om(x)^pdx)^{-1/p}\chi_F$.
Applying Propositions \ref{proposition, lower estimates} and \ref{proposition, upper estimates}, there exist positive constants $C_0$ and $d_0$
independent of $Q$,
such that
\be
\|(T_{\Omega,\,\alpha})_b^m(f)\|_{L^q(E\bs B,\om^q)}\geq 2C_0{a}_{\lambda}(b;Q)^m\ \text{for}\  |B|\leq \frac{\la}{8}|Q|,
\ee
and
\be
\|(T_{\Omega,\,\alpha})_b^m(f)\|_{L^q(\bbR^n\bs 2^{d_0}Q, \om^q)}
\leq
C_0\th_0^m.
\ee
Take a subsequence of $\{Q_j\}_{j=1}^{\infty}$, also denoted by $\{Q_{j}\}_{j=1}^{\infty}$ such that
\be
\frac{|Q_{j+1}|}{|Q_{j}|}\leq \min\{\frac{\la^2}{64}, 2^{-2d_0n}\}.
\ee
Denote
$
B_j: =\left(\frac{|Q_{j-1}|}{|Q_{j}|}\right)^{\frac{1}{2n}}Q_{j},\ \ j\geq 2.
$
It is easy to check
\be
\left(\frac{|Q_{j-1}|}{|Q_{j}|}\right)^{\frac{1}{2n}}\geq 2^{d_0},\ \ |B_{j}|\leq \frac{\la}{8}|Q_{j-1}|.
\ee
Moreover, for any $k>j$, we have
\be
2^{d_0}Q_k\subset B_k,\ \ |B_k|\leq \frac{\la}{8}|Q_{j}|.
\ee

Denote by $E_j$, $F_j$ the sets associated with $Q_j$ as mentioned in Proposition \ref{proposition, key lower estimates}.
Let
\be
f_j:=\left(\int_{F_j}\om(x)^pdx\right)^{-1/p}\chi_{F_j}.\
\ee
Again, for any $k>j\geq 1$, we obtain
\be
\|(T_{\Omega,\,\alpha})_b^m(f_j)\|_{L^q(E_j\bs B_k,\om^q)}\geq 2C_0{a}_{\lambda}(b;Q)^m\geq 2C_0\th_0^m
\ee
and
\be
\|(T_{\Omega,\,\alpha})_b^m(f_{k})\|_{L^q(E_j\bs B_k, \om^q)}
\leq
\|(T_{\Omega,\,\alpha})_b^m(f_{k})\|_{L^q(\bbR^n\bs 2^{d_0}Q_{k}, \om^q)}\leq C_0\th_0^m.
\ee
From this we get
\be
\begin{split}
 & \|(T_{\Omega,\,\alpha})_b^m(f_j)-(T_{\Omega,\,\alpha})_b^m(f_{k})\|_{L^q(\bbR^n, \om^q)}\\
& \quad \geq
  \|(T_{\Omega,\,\alpha})_b^m(f_j)-(T_{\Omega,\,\alpha})_b^m(f_{k})\|_{L^q(E_j\bs B_k, \om^q)}
  \\
&\quad  \geq
  \|(T_{\Omega,\,\alpha})_b^m(f_j)\|_{L^q(E_j\bs B_k, \om^q)}
  -\|(T_{\Omega,\,\alpha})_b^m(f_{k})\|_{L^q(E_j\bs B_k, \om^q)}\geq C_0\th_0^m,
\end{split}
\ee
which leads to a contradiction with the compactness of $(T_{\Omega,\,\alpha})_b^m$.

A similar contradiction argument is valid for the proof of condition (2), we omit the details here.
It remains to prove $b$ satisfies condition (3).

Assume that $b$ satisfies (2) but does not satisfy (3).
Hence, there exists $\th_1>0$ and a sequence of cube $\{\tQ_j\}_{j=1}^{\infty}$ with $|\tQ_j|\lesssim 1$  such that
\be
\tQ_j\cap R_j=\emptyset,\ \  {a}_{\lambda}(b,\tQ_j)\geq \th_1.
\ee

Denote by $\tE_j$, $\tF_j$ the sets associated with $\tQ_j$ as mentioned in Proposition \ref{proposition, key lower estimates}.
Let $C_0$ be the constant mentioned in the proof of condition (1).
Let
\be
\widetilde{f}_j:=\left(\int_{\tF_j}\om(x)^pdx\right)^{-1/p}\chi_{\tF_j}.\
\ee
By Proposition \ref{proposition, upper estimates}, there exists a positive constant $d_1$
independent of $\tQ_j$,
such that
\be
\|(T_{\Omega,\,\alpha})_b^m(\widetilde{f}_j)\|_{L^q(\bbR^n\bs 2^{d_1}\tQ_j, \om^q)}
\leq
C_0\th_1^m.
\ee
Take $d_2\geq d_1$ such that $\tE_j\subset 2^{d_2}\tQ_j$,
then take a subsequence of $\{\tQ_j\}_{j=1}^{\infty}$, still denoted by $\{\tQ_j\}_{j=1}^{\infty}$ such that
\be
2^{d_2}\tQ_i\cap 2^{d_2}\tQ_j=\emptyset,\ \ i\neq j.
\ee
For any $k\neq j$, note that $2^{d_1}\tQ_k\cap \tE_j\subset 2^{d_2}\tQ_k\cap 2^{d_2}\tQ_j=\emptyset$,
we apply Propositions \ref{proposition, lower estimates} and \ref{proposition, upper estimates} to get
\be
\|(T_{\Omega,\,\alpha})_b^m(\widetilde{f}_j)\|_{L^q(\tE_j\bs 2^{d_1}\tQ_k,\om^q)}
=\|(T_{\Omega,\,\alpha})_b^m(\widetilde{f}_j)\|_{L^q(\tE_j,\om^q)}
\geq 2C_0{a}_{\lambda}(b;\tQ_j)^m\geq 2C_0\th_1^m
\ee
and
\be
\|(T_{\Omega,\,\alpha})_b^m(\widetilde{f}_{k})\|_{L^q(\tE_j\bs 2^{d_1}\tQ_k, \om^q)}
\leq
\|(T_{\Omega,\,\alpha})_b^m(\widetilde{f}_{k})\|_{L^q(\bbR^n\bs 2^{d_1}\tQ_{k}, \om^q)}\leq C_0\th_1^m.
\ee
From this we get
\be
\begin{split}
 & \|(T_{\Omega,\,\alpha})_b^m(\widetilde{f}_j)-(T_{\Omega,\,\alpha})_b^m(\widetilde{f}_{k})\|_{L^q(\bbR^n, \om^q)}\\
 & \quad\geq
  \|(T_{\Omega,\,\alpha})_b^m(\widetilde{f}_j)-(T_{\Omega,\,\alpha})_b^m(\widetilde{f}_{k})\|_{L^q(\tE_j\bs 2^{d_1}\tQ_k, \om^q)}
  \\
 &\quad \geq
  \|(T_{\Omega,\,\alpha})_b^m(\widetilde{f}_j)\|_{L^q(\tE_j\bs 2^{d_1}\tQ_k, \om^q)}
  -\|(T_{\Omega,\,\alpha})_b^m(\widetilde{f}_{k})\|_{L^q(\tE_j\bs 2^{d_1}\tQ_k, \om^q)}\geq C_0\th_1^m,
\end{split}
\ee
which leads to a contradiction with the compactness of $(T_{\Omega,\,\alpha})_b^m$.

\section{Compactness of iterated commutators}\label{s5}

In this section, we first establish a result on the boundedness of multilinear commutator
 on weighted Lebesgue spaces in Subsection \ref{s5.1}. Using this result, we further present
 the proof of Theorem \ref{theorem, sufficiency} in Subsection \ref{s5.2}.

\subsection{boundedness of multilinear commutators}\label{s5.1}

In this subsection, we study the boundedness of multilinear commutators.
It is well known that the conjugation method is an efficient way to get the boundedness of commutators,
see \cite{PerezRivera16arxiv} for more details.
We will apply this method in our multi-linear case.
When $p\in (1,\infty)$ is fixed, we denote by $\s:= \om^{1-p'}$ the dual weight, which satisfies
$[\s]_{A_{p'}}^{1/p'}=[\om]_{A_p}^{1/p}$ by a simple calculation. Define
\be
(\om)_{A_{\infty}}:=\max\{[\om]_{A_{\infty}}, [\s]_{A_{\infty}}\}.
\ee

To begin with, we give the following two lemmas with slight modifications of Lemma 2.1 in \cite{Hytonen16ArchMath}.

\begin{lemma}\label{lemma, adjoint of Ap weight and BMO}
  Let $p\in (1,\infty)$, $\om\in A_p$, $b_j\in \bmo$ for $j=1,2,\cdots m$.
  There exists a constant $\k_{n,p,m}$ depending only on the indicated parameters,
  such that
  \be
  [e^{Re(\sum_{j=1}^m b_jz_j)}\om]_{A_p}\leq 4^p[\om]_{A_p}
  \ee
  for all $z_j\in \bbC$ with
  \be
  |z_j|\leq \frac{\k_{n,p,m}}{\|b_j\|_{\bmo}(1+(\om)_{A_{\infty}})}.
  \ee
\end{lemma}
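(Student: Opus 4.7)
The plan is to estimate the $\apn$ characteristic of $u:=e^{Re(\sum_j b_j z_j)}\om$ on an arbitrary cube $Q$ by combining the standard conjugation trick with the quantitative reverse H\"{o}lder inequality (Lemma \ref{l-Ap weight prop}(iv)) and the classical John-Nirenberg inequality. First I would observe that, setting $\phi:=Re(\sum_j b_j z_j)$, the factor $e^{\phi_Q}$ appearing when we pull it out of the first average and the factor $e^{-\phi_Q}$ arising (after raising the second average to the $(p-1)$-th power) cancel identically. Writing $\psi:=\sum_j z_j\bigl(b_j-(b_j)_Q\bigr)$ and $\s:=\om^{1-p'}$, it therefore suffices to prove
\be
A(Q):=\Bigl(\f{1}{|Q|}\int_Q e^{Re(\psi)}\om\,dx\Bigr)\Bigl(\f{1}{|Q|}\int_Q e^{-Re(\psi)/(p-1)}\s\,dx\Bigr)^{p-1}\leq 4^p[\om]_{\apn}
\ee
uniformly in $Q$.

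Next, I would set $\ep:=\ep_n/(\om)_{A_\fz}$ so that, by Lemma \ref{l-Ap weight prop}(iv) together with the identity $[\s]_{A_{p'}}^{1/p'}=[\om]_{\apn}^{1/p}$, both $\om$ and $\s$ satisfy the reverse H\"{o}lder inequality on $Q$ with exponent $1+\ep$ and constant $2$. Applying H\"{o}lder's inequality with the conjugate pair $(1+\ep,(1+\ep)')$ inside each of the two averages separates the weight factor from the exponential factor; the weight pieces are controlled by $2\om_Q$ and $2\s_Q$, respectively, leaving only two exponential averages to estimate.

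For those, I would invoke $|Re(\psi)|\leq\sum_j|z_j|\cdot|b_j-(b_j)_Q|$ together with the generalized H\"{o}lder inequality in $m$ factors to reduce to controlling integrals of the form
\be
\f{1}{|Q|}\int_Q e^{m\la|z_j|\cdot|b_j-(b_j)_Q|}\,dx,\qquad \la\in\Bigl\{(1+\ep)',\ \f{(1+\ep)'}{p-1}\Bigr\}.
\ee
The classical John-Nirenberg inequality ensures each such integral is at most $2$ whenever $m\la|z_j|\cdot\|b_j\|_{\bmo}\leq c_n$ for some dimensional constant $c_n$. Since $(1+\ep)'=1+1/\ep\leq(1+(\om)_{A_\fz})/\ep_n$, the choice
\be
\k_{n,p,m}:=\f{c_n\ep_n\min\{1,p-1\}}{m}
\ee
makes the hypothesis on $|z_j|$ strong enough to force every such exponential average below $2$. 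Assembling all factors gives $A(Q)\leq 4\om_Q\cdot(4\s_Q)^{p-1}=4^p\om_Q\s_Q^{p-1}\leq 4^p[\om]_{\apn}$, as required.

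The main technical obstacle is the tight coupling among three small parameters: $\ep$ must be $\sim 1/(\om)_{A_\fz}$ to apply reverse H\"{o}lder with the clean constant $2$; the dual exponent $(1+\ep)'\sim(\om)_{A_\fz}$ then blows up proportionally; and the resulting John-Nirenberg constraint forces $|z_j|\sim 1/(\om)_{A_\fz}$, which is precisely the quantitative form in the statement. A secondary care is that a common $\k_{n,p,m}$ must cover both the $\om$-side (exponent $(1+\ep)'$) and the $\s$-side (exponent $(1+\ep)'/(p-1)$), which is the source of the $\min\{1,p-1\}$ in the definition of $\k_{n,p,m}$.
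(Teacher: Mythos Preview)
Your proposal is correct and follows essentially the same route as the paper: H\"older to separate weight from exponential, quantitative reverse H\"older (with $\ep\sim 1/(\om)_{A_\fz}$) on the weight pieces, a further H\"older in $m$ factors on the exponential pieces, and John--Nirenberg to close. The only cosmetic differences are that you subtract the cube average $\phi_Q$ at the outset (the paper carries it along and cancels it when invoking John--Nirenberg) and that you supply an explicit formula for $\k_{n,p,m}$, whereas the paper merely asserts that a suitable constant exists.
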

\begin{proof}
From the reverse H\"{o}lder inequality and the John-Nirenberg inequality, there exists a constant $\eta_n$ such that
for any cube $Q$, $\eta\in(0, \leq \eta_n/(\om)_{A_{\infty}}]$ and $b\in \bmo$ with  $\|b\|_{\bmo}\leq \eta_n$,
\be
\left(\frac{1}{|Q|}\int_Q\om(x)^{1+\eta}dx\right)^{\frac{1}{1+\eta}}\leq 2\frac{1}{|Q|}\int_Q \om(x) dx,\ \ \
\left(\frac{1}{|Q|}\int_Q\s(x)^{1+\eta}dx\right)^{\frac{1}{1+\eta}}\leq 2\frac{1}{|Q|}\int_Q \s(x) dx
\ee
and
\be
\frac{1}{|Q|}\int_Q e^{|b(x)-b_Q|}dx\leq 2,
\ee
where $\s:= \om^{1-p'}$. Denote $r:=1+\eta_n/(\om)_{A_{\infty}}$ and $\mathbf{b}(x)\cdot\mathbf{z}:=\sum_{j=1}^mb_j(x)z_j$.
The H\"{o}lder inequality and reverse H\"{o}lder inequality imply that
\be
\begin{split}
  &\left(\frac{1}{|Q|}\int_Qe^{Re(\mathbf{b}(x)\cdot\mathbf{z})}\om(x) dx\right)\left(\frac{1}{|Q|}\int_Q(e^{Re(\mathbf{b}(x)\cdot\mathbf{z})}\om(x))^{1-p'} dx\right)^{p-1}
  \\
 & \quad\leq
  \left(\frac{1}{|Q|}\int_Q\om^{r}(x) dx\right)^{\frac{1}{r}}
  \left(\frac{1}{|Q|}\int_Q e^{r'Re(\mathbf{b}(x)\cdot\mathbf{z})} dx\right)^{\frac{1}{r'}}
  \left(\frac{1}{|Q|}\int_Q\s^{r}(x) dx\right)^{\frac{p-1}{r}}\\
  &\quad\quad\times
  \left(\frac{1}{|Q|}\int_Q e^{r'(1-p')Re(\mathbf{b}(x)\cdot\mathbf{z})} dx\right)^{\frac{p-1}{r'}}
  \\
&\quad  \leq
  2^p\left(\frac{1}{|Q|}\int_Q\om(x) dx\right)
  \left(\frac{1}{|Q|}\int_Q\s(x) dx\right)^{p-1}
  \left(\frac{1}{|Q|}\int_Q e^{r'Re(\mathbf{b}(x)\cdot\mathbf{z})} dx\right)^{\frac{1}{r'}}\\
  &\quad\quad\times
  \left(\frac{1}{|Q|}\int_Q e^{r'(1-p')Re(\mathbf{b}(x)\cdot\mathbf{z})} dx\right)^{\frac{p-1}{r'}}
  \\
 &\quad \leq
  2^p[\om]_{A_p}
  \left(\frac{1}{|Q|}\int_Q e^{r'Re(\mathbf{b}(x)\cdot\mathbf{z})} dx\right)^{\frac{1}{r'}}
  \left(\frac{1}{|Q|}\int_Q e^{r'(1-p')Re(\mathbf{b}(x)\cdot\mathbf{z})} dx\right)^{\frac{p-1}{r'}}
  \\
& \quad \leq
  2^p[\om]_{A_p}\prod_{j=1}^m\left(\frac{1}{|Q|}\int_Q e^{mr'Re(b_j(x)z_j)} dx\right)^{\frac{1}{mr'}}
  \prod_{j=1}^m\left(\frac{1}{|Q|}\int_Q e^{mr'(1-p')Re(b_j(x)z_j)} dx\right)^{\frac{p-1}{mr'}}.
\end{split}
\ee
Observing $\frac{1}{r'}= \frac{\eta_n}{\eta_n+(\om)_{A_{\infty}}}$,
we take $\k_{n,p,m}$ such that for any $j=1,2,\cdots, m$,
\be
\frac{\k_{n,p,m}}{\|b_j\|_{\bmo}(1+(\om)_{A_{\infty}})}\leq \min\left\{\frac{\eta_n}{mr'\|b_j\|_{\bmo}},\ \frac{\eta_n}{mr'(1-p')\|b_j\|_{\bmo}}\right\}.
\ee
Then for $|z_j|\leq \frac{\k_{n,p,m}}{\|b_j\|_{\bmo}(1+(\om)_{A_{\infty}})}$, we have
%  \leq &
%  2^p[\om]_{A_p}\prod_{j=1}^m\left(\frac{1}{|Q|}\int_Q e^{mr'Re(b_jz_j)} dx\right)^{\frac{1}{mr'}}
%  \prod_{j=1}^m\left(\frac{1}{|Q|}\int_Q e^{mr'(1-p')Re(b_jz_j)} dx\right)^{\frac{p-1}{mr'}}
%  \\
\begin{eqnarray*}
  &&\left(\frac{1}{|Q|}\int_Qe^{Re(\mathbf{b}(x)\cdot\mathbf{z})}\om(x) dx\right)\left(\frac{1}{|Q|}\int_Q(e^{Re(\mathbf{b}(x)\cdot\mathbf{z})}\om(x))^{1-p'} dx\right)^{p-1}\\
&& \quad \leq  2^p[\om]_{A_p}\prod_{j=1}^m\left(\frac{1}{|Q|}\int_Q e^{mr'[Re(b_j(x)z_j)-(Re(b_j(x)z_j))_Q]} dx\right)^{\frac{1}{mr'}}\\
&&\quad\quad \times \prod_{j=1}^m\left(\frac{1}{|Q|}\int_Q e^{mr'(1-p')[Re(b_j(x)z_j)-(Re(b_j(x)z_j))_Q]} dx\right)^{\frac{p-1}{mr'}}  \\
&& \quad \leq   2^p[\om]_{A_p}\prod_{j=1}^m 2^{\frac{1}{mr'}}
  \prod_{j=1}^m 2^{\frac{p-1}{mr'}}=2^{p+p/r'}[\om]_{A_p}\leq 4^p[\om]_{A_p}.
\end{eqnarray*}
\end{proof}

\begin{lemma}\label{lemma, adjoint of Apq weight and BMO}
  Let $p,q\in (1,\infty)$, $\om\in A_{p,q}$, $b_j\in \bmo$ for $j=1,2,\cdots m$. There exists a constant $\k_{n,p,q,m}$ depending only on the indicated parameters,
  such that
  \be
  [e^{Re(\sum_{j=1}^m b_jz_j)}\om]_{\apq}\leq 4^{1+q/p'}[\om]_{\apq}
  \ee
  for all $z_j\in \bbC$ with
  \be
  |z_j|\leq \frac{\k_{n,p,q,m}}{\|b_j\|_{\bmo}(1+(\om)_{A_{\infty}})}.
  \ee
\end{lemma}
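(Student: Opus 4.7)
The proof plan is to reduce to Lemma \ref{lemma, adjoint of Ap weight and BMO} via the characterization of $A_{p,q}$ in terms of Muckenhoupt classes. By Lemma \ref{l-Apq weigh prop}(ii), $\omega\in A_{p,q}$ if and only if $\omega^q\in A_{1+q/p'}$. A direct computation with $s:=1+q/p'$ gives $s'=(p'+q)/q$ and hence $q(1-s')=-p'$, so
\be
[\omega^q]_{A_s}=\sup_Q\left(\frac{1}{|Q|}\int_Q\omega^q\right)\left(\frac{1}{|Q|}\int_Q\omega^{-p'}\right)^{q/p'}=[\omega]_{A_{p,q}}.
\ee
In particular, the $A_{p,q}$ constant of $\omega$ literally equals the $A_s$ constant of $\omega^q$.

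Next, I would transport the perturbation. Setting $\widetilde\omega:=e^{Re(\sum_{j=1}^m b_jz_j)}\omega$, one has
\be
\widetilde\omega^{\,q}=e^{qRe(\sum_{j=1}^m b_jz_j)}\omega^q=e^{Re(\sum_{j=1}^m b_j(qz_j))}\omega^q,
\ee
so the perturbation of $\omega^q$ is exactly of the form treated in Lemma \ref{lemma, adjoint of Ap weight and BMO}, with BMO symbols $b_j$ and complex parameters $qz_j$. Applying that lemma with exponent $s=1+q/p'$ to the weight $\omega^q$, I obtain
\be
[\widetilde\omega]_{A_{p,q}}=[\widetilde\omega^{\,q}]_{A_s}\leq 4^s[\omega^q]_{A_s}=4^{1+q/p'}[\omega]_{A_{p,q}},
\ee
which is exactly the desired inequality, provided the smallness hypothesis of Lemma \ref{lemma, adjoint of Ap weight and BMO} is verified.

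The one remaining bookkeeping point is the admissibility range for the $z_j$, and this is where I expect the only real obstacle. Lemma \ref{lemma, adjoint of Ap weight and BMO} requires $|qz_j|\leq \kappa_{n,s,m}/(\|b_j\|_{\bmo}(1+(\omega^q)_{A_\infty}))$, where $(\omega^q)_{A_\infty}=\max\{[\omega^q]_{A_\infty},[\omega^{-p'}]_{A_\infty}\}$ (the dual weight of $\omega^q$ in $A_s$ being precisely $\omega^{-p'}$). Since $\omega^q\in A_q$ and $\omega^{-p'}\in A_{p'}$ by Lemma \ref{l-Apq weigh prop}(i), both quantities are controlled by the Fujii--Wilson constants appearing in the definition of $(\omega)_{A_\infty}$ for an $A_{p,q}$ weight; in particular $1+(\omega^q)_{A_\infty}\lesssim 1+(\omega)_{A_\infty}$ with a constant depending only on $n,p,q$. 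Absorbing the factor $q$ and this constant into the new constant, I set
\be
\kappa_{n,p,q,m}:=\frac{1}{q}\,\frac{\kappa_{n,1+q/p',m}}{1+C_{n,p,q}},
\ee
where $C_{n,p,q}$ is the constant implicit in the comparison above. With this choice the hypothesis of Lemma \ref{lemma, adjoint of Ap weight and BMO} is satisfied whenever $|z_j|\leq \kappa_{n,p,q,m}/(\|b_j\|_{\bmo}(1+(\omega)_{A_\infty}))$, completing the proof.
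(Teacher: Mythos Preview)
Your proof is correct and follows essentially the same route as the paper: both reduce to Lemma~\ref{lemma, adjoint of Ap weight and BMO} via the identification $[\omega]_{A_{p,q}}=[\omega^q]_{A_{1+q/p'}}$ and then observe that the perturbed weight $\widetilde\omega^{\,q}$ has exactly the form handled there. The only difference is bookkeeping on the $A_\infty$ constant: the paper's proof simply writes the constraint with $(\omega^q)_{A_\infty}$ (so the $(\omega)_{A_\infty}$ in the statement is to be read as $(\omega^q)_{A_\infty}$, the natural quantity for $\omega^q\in A_{1+q/p'}$), whereas you attempt to compare $(\omega^q)_{A_\infty}$ with an independently defined $(\omega)_{A_\infty}$; that comparison is unnecessary once you interpret the notation as the paper does.
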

\begin{proof}
Recall
  \be
\om\in \apq\Longleftrightarrow \om^q\in A_{1+\frac{q}{p'}},\ \ \ [\om]_{\apq}=[\om^q]_{A_{1+q/p'}}.
\ee
By Lemma \ref{lemma, adjoint of Ap weight and BMO}, there exists a constant $\k_{n,p,q,m}$ such that
\be
  [e^{qRe(\sum_{j=1}^m b_jz_j)}\om^q]_{A_{1+\frac{q}{p'}}}\leq 4^{1+q/p'}[\om^q]_{A_{1+\frac{q}{p'}}}
  \ee
  for all $z_j\in \bbC$ with
  \be
  |z_j|\leq \frac{\k_{n,p,q,m}}{\|b_j\|_{\bmo}(1+(\om^q)_{A_{\infty}})}.
  \ee
Hence,
\be
[e^{Re(\sum_{j=1}^m b_jz_j)}\om]_{\apq}
=[e^{qRe(\sum_{j=1}^m b_jz_j)}\om^q]_{A_{1+\frac{q}{p'}}}\leq 4^{1+q/p'}[\om^q]_{A_{1+\frac{q}{p'}}}=4^{1+q/p'}[\om]_{\apq}.\ee
\end{proof}

For a vector-valued function $\mathbf{b}:=(b_1,b_2,\cdots,b_m)$ with $b_j\in \bmo$, $1\leq j\leq m$,
and a $\r$-type operator $T_{K_{\al}}$,
the multi-linear commutator is defined by
\be
  (T_{K_{\al}})_{\mathbf{b}}^m:=[b_m,\cdots [b_2,[b_1, T_{K_\al}]]\cdots].
\ee

We then have the following conclusion.

\begin{theorem}\label{t-bdd of multilinear commutator}
Let $1<p,q<\infty$, $1/q=1/p-\al/n$ with $0\leq\al<n$, $\om\in A_{p,q}$.
For a  vector-valued function $\mathbf{b}:=(b_1,b_2,\cdots,b_m)$, $b_j\in \bmo$, $m\geq 2$, we have
  \be
  \|(T_{K_{\al}})_{\mathbf{b}}^m\|_{L^q(\om^q)}\lesssim \prod_{j=1}^m\|b_j\|_{\bmo}\|f\|_{L^p(\om^p)}.
  \ee
\end{theorem}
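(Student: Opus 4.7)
The plan is to run the Coifman–Rochberg–Weiss conjugation (Cauchy integral) method, now adapted to the multilinear setting and to $A_{p,q}$ weights, leveraging Lemmas \ref{lemma, adjoint of Ap weight and BMO} and \ref{lemma, adjoint of Apq weight and BMO} that we have just established. The point is that the iterated commutator can be recovered as a mixed partial derivative of a holomorphic family of conjugated operators, and the Cauchy formula then turns a uniform norm estimate on a polydisc into the desired bound.

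First, for $\mathbf{z}=(z_1,\dots,z_m)\in\mathbb{C}^m$, define the conjugated operator
\[
T^{\mathbf{z}}(f)(x):=e^{-\sum_{j=1}^m b_j(x)z_j}\,T_{K_\alpha}\!\left(e^{\sum_{j=1}^m b_j(\cdot)z_j}f\right)(x).
\]
A direct induction on $m$ (differentiating one variable at a time and using that at each stage the derivative at $z_j=0$ produces $-[b_j,\cdot]$) shows
\[
(T_{K_\alpha})_{\mathbf{b}}^m f \;=\; (-1)^m\,\frac{\partial^m}{\partial z_1\cdots\partial z_m}\bigg|_{\mathbf{z}=0} T^{\mathbf{z}}(f).
\]
Set $\varepsilon_j:=\kappa_{n,p,q,m}\bigl[\|b_j\|_{\mathrm{BMO}}(1+(\omega)_{A_\infty})\bigr]^{-1}$ with the constant $\kappa_{n,p,q,m}$ from Lemma \ref{lemma, adjoint of Apq weight and BMO}. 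Then the multivariate Cauchy formula on the distinguished boundary $\prod_{j=1}^m\{|z_j|=\varepsilon_j\}$ gives
\[
(T_{K_\alpha})_{\mathbf{b}}^m f = \frac{(-1)^m}{(2\pi i)^m}\oint\cdots\oint \frac{T^{\mathbf{z}}(f)}{z_1^2\cdots z_m^2}\,dz_1\cdots dz_m.
\]

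Next I would show the crucial uniform estimate
\[
\sup_{|z_j|=\varepsilon_j,\,1\le j\le m}\bigl\|T^{\mathbf{z}}(f)\bigr\|_{L^q(\omega^q)} \lesssim \|f\|_{L^p(\omega^p)},
\]
with an implicit constant independent of $\mathbf{z}$ on the polydisc. Since $b_j$ is real-valued, $|e^{\pm\sum b_j z_j}|=e^{\pm\operatorname{Re}\sum b_j z_j}$, so writing $\widetilde{\omega}:=\omega\,e^{-\operatorname{Re}\sum b_j z_j}$ one has
\[
\bigl\|T^{\mathbf{z}}(f)\bigr\|_{L^q(\omega^q)} = \bigl\|T_{K_\alpha}\!\bigl(e^{\sum b_j z_j}f\bigr)\bigr\|_{L^q(\widetilde{\omega}^q)},\qquad \bigl\|e^{\sum b_j z_j}f\bigr\|_{L^p(\widetilde{\omega}^p)} = \|f\|_{L^p(\omega^p)}.
\]
By Lemma \ref{lemma, adjoint of Apq weight and BMO} applied with $-z_j$ in place of $z_j$, our choice of $\varepsilon_j$ guarantees $[\widetilde{\omega}]_{A_{p,q}}\le 4^{1+q/p'}[\omega]_{A_{p,q}}$ uniformly on the polydisc; the classical $L^p(\widetilde{\omega}^p)\to L^q(\widetilde{\omega}^q)$ bound for $T_{K_\alpha}$ (recalled after the definition of $K_\alpha$) closes the estimate with a constant depending only on $[\omega]_{A_{p,q}}$, $p$, $q$, $n$, $\alpha$ and $C_{K_\alpha}$.

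Finally, plugging the uniform estimate into the Cauchy representation and parametrizing each circle by $z_j=\varepsilon_j e^{i\theta_j}$ yields
\[
\bigl\|(T_{K_\alpha})_{\mathbf{b}}^m f\bigr\|_{L^q(\omega^q)} \lesssim \prod_{j=1}^m \varepsilon_j^{-1}\,\|f\|_{L^p(\omega^p)} \lesssim \prod_{j=1}^m \|b_j\|_{\mathrm{BMO}}\,\|f\|_{L^p(\omega^p)},
\]
which is the claimed bound. The main obstacle here is not any single estimate but the coordination of three ingredients: (i) recognizing the iterated commutator as a clean mixed partial of $T^{\mathbf z}$, (ii) verifying that the conjugated weight stays in $A_{p,q}$ with uniform constant on the correctly-scaled polydisc (where Lemma \ref{lemma, adjoint of Apq weight and BMO} is indispensable, and the dependence of $\varepsilon_j$ on $\|b_j\|_{\mathrm{BMO}}$ must be exactly $\|b_j\|_{\mathrm{BMO}}^{-1}$ to give the right final product), and (iii) ensuring the standing hypothesis that $b_j$ is real-valued, so that $|e^{-\mathbf{b}\cdot\mathbf{z}}|$ factors cleanly in the weighted norm.
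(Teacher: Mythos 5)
Your proposal is correct and follows essentially the same route as the paper: the Coifman--Rochberg--Weiss conjugation method, the Cauchy integral representation of the iterated commutator over the polydisc with radii $\varepsilon_j\sim\|b_j\|_{\bmo}^{-1}(1+(\om)_{A_\infty})^{-1}$, the stability of the $A_{p,q}$ constant under conjugation from Lemma \ref{lemma, adjoint of Apq weight and BMO}, and the known weighted bound for $T_{K_\al}$. The only cosmetic difference is your sign convention for the conjugated operator (hence the harmless factor $(-1)^m$) and that you make the weight-transfer identities explicit, which the paper carries out implicitly in its chain of inequalities.
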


\begin{proof}
Denote $\mathbf{b}\cdot\mathbf{z}:=\sum_{j=1}^mb_jz_j$, where $\mathbf{z}:=(z_1,z_2,\cdots,z_m)\in \bbC^m$.
  Write $(T_{K_{\al}})_{\mathbf{b}}^m$ as a complex integral operator using the Cauchy integral theorem by
  \be
  \begin{split}
  (T_{K_{\al}})_{\mathbf{b}}^mf
  = &
  \left(\frac{\partial}{\partial z_m}
  \cdots \left(\frac{\partial}{\partial z_2}\left(\frac{\partial}{\partial z_1}e^{\mathbf{b}\cdot\mathbf{z}}T(fe^{-\mathbf{b}\cdot\mathbf{z}})|_{z_1=0}\right)\right)\bigg|_{z_2=0}\cdots\right)\bigg|_{z_m=0}
  \\
  %= &
%   \frac{\partial}{\partial z_m}\cdots\frac{\partial}{\partial z_1}e^{\mathbf{b}\mathbf{z}}T(fe^{-\mathbf{b}\mathbf{z}})|_{\mathbf{z}=0}
  =&
  \frac{1}{(2\pi i)^m}\int_{|z_m|=\ep_m}\cdots \int_{|z_1|=\ep_1}\frac{e^{\mathbf{b}\cdot\mathbf{z}}T(e^{-\mathbf{b}\cdot\mathbf{z}}f)}{\prod_{j=1}^m z_j^2}dz_1\cdots dz_m,
  \end{split}
  \ee
  where we take
  \be
  \ep_j:=\frac{\k_{n,p,q,m}}{\|b_j\|_{\bmo}(1+(\om)_{A_{\infty}})}
  \ee
  from Lemma \ref{lemma, adjoint of Apq weight and BMO}.
  This implies that for $\mathbf{b}\cdot\mathbf{z}$ such that $|z_j|=\ep_j$, $1\leq j\leq m$,
  \be
  e^{Re(\mathbf{b}\cdot\mathbf{z})}\om\in A_{p,q}.
  \ee
  Hence, by the well known boundedness of $T_{K_{\al}}$ (see \cite{Lacey17IsraelJMath, Lerner16NewYorkJMath}),
  \be
  \begin{split}
  \|(T_{K_{\al}})_{\mathbf{b}}^mf\|_{L^q(\om^q)}
 & \leq
  \frac{1}{\prod_{j=1}^m\ep_j}\sup_{|z_j|=\ep_j}\|e^{\mathbf{b}\cdot\mathbf{z}}T(e^{-\mathbf{b}\cdot\mathbf{z}}f)\|_{L^q(\om^q)}
  \\
& \leq
  \frac{1}{\prod_{j=1}^m\ep_j}\sup_{|z_j|=\ep_j}\|e^{Re(\mathbf{b}\cdot\mathbf{z})}T(e^{-\mathbf{b}\cdot\mathbf{z}}f)\|_{L^q(\om^q)}
  \\
& =
  \frac{1}{\prod_{j=1}^m\ep_j}\sup_{|z_j|=\ep_j}\|T(e^{-\mathbf{b}\cdot\mathbf{z}}f)\|_{L^q((e^{Re(\mathbf{b}\cdot\mathbf{z})}\om)^q)}
  \\
 & \lesssim
  \frac{1}{\prod_{j=1}^m\ep_j}\sup_{|z_j|=\ep_j}\|e^{-\mathbf{b}\cdot\mathbf{z}}f\|_{L^p((e^{Re(\mathbf{b}\cdot\mathbf{z})}\om)^p)}
   =
  \frac{1}{\prod_{j=1}^m\ep_j}\|f\|_{L^p(\om^p)}.
  \end{split}
  \ee
By the definition of $\ep_j$, we get the desired conclusion.
\end{proof}

\subsection{Proof of Theorem \ref{theorem, sufficiency}}\label{s5.2}

In this subsection, we establish the compactness of iterated commutators. Firstly, we recall the following weighted
Fr\'echet-Kolmogorov theorem obtained in \cite{ClopCruz13AASFM}.
\begin{lemma}\label{lemma, criterion of precompact}
  Let $p\in (1,\infty)$, $\om\in A_p$. A subset $E$ of $L^p(\om)$ is precompact (or totally bounded)
  if the following statements hold:
  \bn[(a)]
  \item $E$ is uniformly bounded, i.e., $sup_{f\in E}\|f\|_{L^p(\om)}\lesssim 1$;
  \item $E$ uniformly vanishes at infinity, that is,
  \be
  \lim_{N\rightarrow \infty}\int_{|x|>N}|f(x)|^p\om(x)dx\rightarrow 0,\ \text{uniformly for all}\ f\in E.
  \ee
  \item $E$ is uniformly equicontinuous, that is,
  \be
  \lim_{\r\rightarrow 0}\sup_{y\in B(0,\r)}\int_{\bbR^n}|f(x+y)-f(x)|^p\om(x)dx\rightarrow 0,\ \text{uniformly for all}\ f\in E.
  \ee
  \en
\end{lemma}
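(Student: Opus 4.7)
The plan is to follow the classical Fr\'echet--Kolmogorov strategy, adapted to the weighted setting, and to construct a finite $\varepsilon$-net for $E$ in $L^p(\om)$ for every $\varepsilon>0$. Fix such an $\varepsilon$. By condition (b), I choose $N$ so large that $\int_{|x|>N}|f(x)|^p\om(x)\,dx<\varepsilon^p$ uniformly for all $f\in E$, which reduces the problem to approximating the truncated family $E_N:=\{f\chi_{B(0,N)}:f\in E\}$ at cost $\varepsilon$.

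Next, I would smooth the functions by convolution. Fix a standard mollifier $\va\in C_c^\infty(\bbR^n)$ with $\va\geq 0$, $\text{supp}(\va)\subset B(0,1)$, $\int\va=1$, and set $\va_\r(x):=\r^{-n}\va(x/\r)$. Minkowski's integral inequality gives
\be
\|f*\va_\r-f\|_{L^p(\om)}\leq\int_{B(0,1)}\va(z)\,\|f(\cdot-\r z)-f(\cdot)\|_{L^p(\om)}\,dz,
\ee
and condition (c) lets me pick $\r>0$ small enough (uniformly in $f\in E$) that this is at most $\varepsilon$. Thus the mollified family $F_\r:=\{f_N*\va_\r:f_N\in E_N\}$ approximates $E_N$ within $\varepsilon$ in $L^p(\om)$.

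The third step applies Arzel\`a--Ascoli to $F_\r$. These are smooth functions supported in $\overline{B(0,N+\r)}$. H\"older's inequality together with local integrability of $\om^{1-p'}$ (which follows from $\om\in A_p$) yields
\be
|(f_N*\va_\r)(x)|\leq\|\va_\r\|_{L^{p'}(\om^{1-p'})}\|f_N\|_{L^p(\om)},
\ee
so $F_\r$ is uniformly bounded in $L^\infty$ by condition (a). Equicontinuity follows from the pointwise bound $|\va_\r(x-z)-\va_\r(x'-z)|\leq \|\na\va_\r\|_{L^\infty}|x-x'|$ combined again with H\"older and (a). Arzel\`a--Ascoli then makes $F_\r$ precompact in $C(\overline{B(0,N+\r)})$; since $\om$ is locally integrable, uniform convergence on this ball implies $L^p(\om)$-convergence there, and extracting a finite $\varepsilon$-net in $C$ produces a finite $\varepsilon$-net for $F_\r$ in $L^p(\om)$.

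Combining the three layers (truncation, mollification, and the finite net) yields a finite $3\varepsilon$-net for $E$ in $L^p(\om)$, which proves total boundedness and hence precompactness. The main obstacle is keeping the weight visible through the mollification step: one must verify both that convolution with $\va_\r$ maps $L^p(\om)$ continuously into $L^\infty$ on the bounded ball (where the $A_p$ hypothesis enters via $\om^{1-p'}\in L^1_{loc}$), and that $f*\va_\r\to f$ in $L^p(\om)$ uniformly on $E$. The latter is exactly the content of the weighted equicontinuity assumption (c), which is why this hypothesis must be imposed in the weighted formulation rather than being automatic as in the unweighted one.
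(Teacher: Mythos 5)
The paper itself offers no proof of this lemma: it is recalled verbatim from \cite{ClopCruz13AASFM}, so your argument can only be measured against the standard proof of that result, and your overall scheme (truncation, mollification via Minkowski's integral inequality, Arzel\`a--Ascoli on a ball, finite net) is exactly that standard scheme. There is, however, one genuine gap in your second step: you mollify the truncated functions $f_N=f\chi_{B(0,N)}$ and claim $\|f_N*\va_\r-f_N\|_{L^p(\om)}\le\varepsilon$ from hypothesis (c). But (c) gives uniform translation continuity of $f$, not of $f\chi_{B(0,N)}$: writing
\be
f_N(x+y)-f_N(x)=\big(f(x+y)-f(x)\big)\chi_{B(0,N)}(x)+f(x+y)\big(\chi_{B(0,N)}(x+y)-\chi_{B(0,N)}(x)\big),
\ee
the second term is supported in a thin annulus around $\partial B(0,N)$, and its $L^p(\om)$ norm is not controlled by (a)--(c) in any direct way (uniform smallness of $\int_A|f|^p\om$ over thin sets is not among the hypotheses). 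Your displayed Minkowski estimate is stated for $f$, and does not transfer to $f_N$ as claimed. The repair is simply to reverse the order of the two operations: approximate $f$ by $\chi_{B(0,N)}(f*\va_\r)$, using
\be
\|f-\chi_{B(0,N)}(f*\va_\r)\|_{L^p(\om)}\le\|f\chi_{B(0,N)^c}\|_{L^p(\om)}+\|f-f*\va_\r\|_{L^p(\om)},
\ee
where the first term is small by (b) and the second by your Minkowski-plus-(c) computation applied to $f$ itself; then run Arzel\`a--Ascoli on $\{(f*\va_\r)|_{\overline{B(0,N)}}:f\in E\}$ exactly as you do.

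Two smaller points of precision in the Arzel\`a--Ascoli step. The H\"older bound should read $|(f*\va_\r)(x)|\le\|\va_\r(x-\cdot)\|_{L^{p'}(\om^{1-p'})}\|f\|_{L^p(\om)}$: the norm depends on $x$, but it is uniformly bounded for $x$ in the fixed ball $\overline{B(0,N+\r)}$ because $\om^{1-p'}\in L^1_{loc}$ (a consequence of $\om\in A_p$), and the same remark applies to your equicontinuity estimate via $\nabla\va_\r$. Finally, the passage from uniform convergence on $\overline{B(0,N+\r)}$ to convergence in $L^p(\om)$ there uses $\om(B(0,N+\r))<\infty$, i.e.\ local integrability of $\om$, which you correctly invoked. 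With the reordering above, the proof is complete and coincides with the argument of the cited reference.
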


\begin{proof}[Proof of Theorem \ref{theorem, sufficiency}]
By the definition of compact operator, we will verify the set
$$A(K,b):=\{(T_{K_\al})_b^m(f): \|f\|_{L^p(\om^p)}\leq 1\}$$
 is precompact.
Suppose $b\in \cmo$. For any $\ep>0$ there exists a $C_c^{\infty}$ function $b_{\ep}$ such that
\be
\|b-b_{\ep}\|_{\bmo}<\ep.
\ee
Recall an elementary formula
\be
b^m-b_{\ep}^m=(b-b_{\ep})(b^{m-1}+b^{m-2}b_{\ep}+\cdots+b_{\ep}^{m-1}).
\ee
From this and Theorem \ref{t-bdd of multilinear commutator}, we obtain
\be
\|(T_{K_\al})_b^m-(T_{K_\al})_{b_{\ep}}^m\|_{L^p(\om^p)\rightarrow L^q(\om^q)}\lesssim \ep\|b\|_{\bmo}^{m-1}.
\ee
Thus, in order to verify the set $A(K,b)$ is precompact, or equivalently, totally bounded on $L^q(\om^q)$,
we only need to consider the case of $b\in C_c^\fz(\rn)$.

Moreover, observe that for a general weight $\om$, the norm of the space $L^q(\om^q)$ is not invariant under translation.
Keep the assumption $b\in C_c^\fz(\rn)$, a further reduction for the kernel $K$ is needed.
As mentioned in \cite{ClopCruz13AASFM}, the idea of considering truncated operators to prove compactness results goes back to Krantz and Li \cite{KrantzLi01JMAAb}.
Take $\va\in C_c^{\infty}(\bbR^n)$ supported on $B(0,1)$ such that $\va=1$ on $B(0,1/2)$, $0\leq \va\leq 1$.
Let $\va_{\d}(x):=\va(\frac{x}{\d})$, $K_{\al}^{\d}(x,y): =K_{\al}(x,y)\cdot (1-\va_{\d}(x-y))$.
For $|x-x'|<|x-y|/2$, we have
\ben\label{proof, 5}
\begin{split}
  &|K_{\al}^{\d}(x,y)-K_{\al}^{\d}(x',y)|+|K_{\al}^{\d}(y,x)-K_{\al}^{\d}(y,x')|
  \\
 &\quad \leq
  |K_{\al}(x,y)-K_{\al}(x',y)|+|K_{\al}(y,x)-K_{\al}(y,x')|
  \\
& \quad\quad +
  (|K_{\al}(x,y)|+|K_{\al}(y,x)|)\cdot |\va_{\d}(x-y)-\va_{\d}(x'-y)|
  \\
  &\quad\ls
\frac{1}{|x-y|^{n-\al}}\r\left(\frac{|x-x'|}{|x-y|}\right)+\frac{1}{|x-y|^{n-\al}}|\va_{\d}(x-y)-\va_{\d}(x'-y)|.
\end{split}
\een
It follows from the mean value formula that for some $\th\in (0,1)$,
\be
\begin{split}
  |\va_{\d}(x-y)-\va_{\d}(x'-y)|
  \leq
  \left|\nabla \va\left(\frac{(1-\th)x+\th x'-y}{\d}\right)\right|\cdot \frac{|x-x'|}{\d}.
\end{split}
\ee
From this and
\be
|\nabla \va(x)|\lesssim \chi_{1/2\leq|x|\leq 1}(x),\ \ |(1-\th)x+\th x'-y|\sim |x-y|,
\ee
we obtain
\be
|\va_{\d}(x-y)-\va_{\d}(x'-y)|\lesssim \frac{|x-x'|}{|x-y|}.
\ee
Combining this estimate with (\ref{proof, 5}) yields
\be
\begin{split}
  |K_{\al}^{\d}(x,y)-K_{\al}^{\d}(x',y)|+|K_{\al}^{\d}(y,x)-K_{\al}^{\d}(y,x')|
\lesssim &
\frac{1}{|x-y|^{n-\al}}\left(\r\left(\frac{|x-x'|}{|x-y|}\right)+\frac{|x-x'|}{|x-y|}\right)
\\
\lesssim &
\frac{1}{|x-y|^{n-\al}}\tilde\r\left(\frac{|x-x'|}{|x-y|}\right),
\end{split}
\ee
where the function $\tilde\r(t):= \r(t)+t$ also satisfies the Dini condition.
Moreover,
\be
\begin{split}
  |(T_{K_\al^{\d}})_b^mf(x)-(T_{K_\al})_b^mf(x)|
  \leq &
  \left|\int_{\bbR^n}(b(x)-b(y))^m\va_{\d}(x-y)K_{\al}(x,y)f(y)dy\right|
  \\
  \leq &
  \int_{|x-y|\leq \d}\frac{|f(y)|}{|x-y|^{n-\al-m}}dy.
\end{split}
\ee
By the usual dyadic decomposition method, we get
\be
\begin{split}
  \int_{|x-y|\leq \d}\frac{|f(y)|}{|x-y|^{n-\al-m}}dy
= &
\sum_{j=0}^{\infty}\int_{2^{-(j+1)}\d \leq|x-y|\leq 2^{-j}\d}\frac{|f(y)|}{|x-y|^{n-\al-m}}dy
\\
\leq &
\sum_{j=0}^{\infty}(2^{-j}\d)^m\int_{2^{-(j+1)}\d \leq|x-y|\leq 2^{-j}\d}\frac{|f(y)|}{|x-y|^{n-\al}}dy
\\
\lesssim &
\sum_{j=0}^{\infty}2^{-jm}\d^m M_{\al}(f)(x)\lesssim \d^mM_{\al}(f)(x).
\end{split}
\ee
From the above two estimates we get
\be
\|(T_{K_\al^{\d}})_b^mf-(T_{K_\al})_b^mf\|_{L^q(\om^q)}\lesssim \d^m\|M_{\al}f\|_{L^q(\om^q)}\lesssim \d^m\|f\|_{L^p(\om^p)}.
\ee
Since $\d$ can be chosen arbitrarily small, we only need to verify $A(K^{\d},b)$ is totally bounded,
where $\d>0$, $b\in C_c^{\infty}(\bbR^n)$. Thanks to Lemma \ref{lemma, criterion of precompact}, we only need to check the conditions (a)-(c)
for $A(K^{\d},b)$.

Without loss of generality, we assume that $b$ is supported in a cube $Q$ centered at the origin.
By the boundedness of $(T_{K_{\al}^{\d}})_b^m$, $A(K^{\d},b)$ is a bounded set in $L^q(\om^q)$, which shows the correction of condition (a).

For $x\in (2Q)^c$,
\be
\begin{split}
  |(T_{K_{\al}^{\d}})_b^m(f)(x)|
  = &
  \left|\int_{\bbR^n}(b(y))^mK_{\al}^{\d}(x,y)f(y)dy\right|
  \\
  \lesssim &
  \frac{\|b\|_{L^{\infty}}^m}{|x|^{n-\al}}\int_{Q}|f(y)|dy
  \\
  \lesssim &
  \frac{\|b\|_{L^{\infty}}^m}{|x|^{n-\al}}\|f\|_{L^p(\om^p)}\left(\int_{Q}\om^{-p'}(x)dx\right)^{1/p'}.
\end{split}
\ee
Take $N>2$,
\be
\begin{split}
  \left(\int_{(2^NQ)^c}|(T_{K_{\al}^{\d}})_b^m(f)(x)|^q\om(x)^qdx\right)^{1/q}
  \lesssim
  \left(\int_{(2^NQ)^c}\frac{\om(x)^q}{|x|^{q(n-\al)}}dx\right)^{1/q}\left(\int_{Q}\om^{-p'}(x)dx\right)^{1/p'}.
\end{split}
\ee
Since $\om^q\in A_{\frac{q(n-\al)}{n}}$,
there exists a positive constant $\d>0$ such that $\om^q\in A_{{\frac{q(n-\al)}{n}}-\d}$.
By the doubling property of $A_{{\frac{q(n-\al)}{n}}-\d}$ we obtain
\be
\int_{2^{d}Q}\om(x)^qdx\leq 2^{dq(n-\al)-dn\d}[\om^q]_{A_{\frac{q(n-\al)}{n}-\d}}\int_{Q}\om(x)^{q}dx,
\ee
which implies
\be
\int_{2^{d+1}Q\bs 2^dQ}\frac{\om(x)^q}{|x|^{q(n-\al)}}dx
\lesssim
\frac{2^{dq(n-\al)-dn\d}}{2^{dq(n-\al)}}=2^{-dn\d}.
\ee
Thus,
\be
\begin{split}
  \left(\int_{(2^NQ)^c}|(T_{K_{\al}^{\d}})_b^m(f)(x)|^q\om(x)^qdx\right)^{1/q}
  \lesssim &
  \left(\sum_{j=0}^{\infty}\int_{2^{N+j+1}Q\bs 2^{N+j}Q}\frac{\om(x)^q}{|x|^{q(n-\al)}}dx\right)^{1/q}
  \\
  \lesssim &
  \left(\sum_{j=0}^{\infty}2^{-(N+j)n\d}\right)^{1/q}
  =
  2^{-Nn\delta}\left(\sum_{j=0}^{\infty}2^{-jn\d}\right)^{1/q}
\end{split}
\ee
which tends to zero as $N$ tends to infinity. This proves condition (b).

It remains to prove that $A(K^{\d},b)$ is equicontinuous in $L^q(\om^q)$. Assume that $\|f\|_{\lpwp}=1$. Take $z\in \bbR^n$ with $|z|\leq \frac{\d}{8}$, then
\be
\begin{split}
  &(T_{K_\al^{\d}})_b^m(f)(x+z)-(T_{K_\al^{\d}})_b^m(f)(x)
  \\
  &\quad=
  \int_{\bbR^n}(b(x+z)-b(y))^m(K_{\al}^{\d}(x+z,y)-K_{\al}^{\d}(x,y))f(y)dy
  \\
  &\quad\quad +
  \int_{\bbR^n}\big((b(x+z)-b(y))^m-(b(x)-b(y))^m\big)K_{\al}^{\d}(x,y)f(y)dy
  \\
&\quad=: I_1(x,z)+I_2(x,z).
\end{split}
\ee
We start the estimate of the first term.
Observing that $K_{\al}^{\d}(x+z,y)$ and $K_{\al}^{\d}(x,y)$ both vanish when $|x-y|\leq \frac{\d}{4}$,
then
\be
\begin{split}
  |I_1(x,z)|
  \leq &
  \int_{|x-y|\geq \d/4}|b(x+z)-b(y)|^m|K_{\al}^{\d}(x+z,y)-K_{\al}^{\d}(x,y)|\cdot|f(y)|dy
  \\
  \lesssim &
  \int_{|x-y|\geq \d/4}\frac{1}{|x-y|^{n-\al}}\tilde\r\left(\frac{|z|}{|x-y|}\right)|f(y)|dy
  \\
  = &
  \sum_{j=0}^{\infty}\int_{2^{j-2}\d\leq |x-y|\leq 2^{j-1}\d}\frac{1}{|x-y|^{n-\al}}\tilde\r\left(\frac{|z|}{|x-y|}\right)|f(y)|dy
  \\
  \leq &
  \sum_{j=0}^{\infty}\tilde\r\left(\frac{2^{2-j}|z|}{\d}\right)
  \int_{2^{j-2}\d\leq |x-y|\leq 2^{j-1}\d}\frac{1}{|x-y|^{n-\al}}|f(y)|dy
  \lesssim
  \sum_{j=0}^{\infty}\tilde\r\left(\frac{2^{2-j}|z|}{\d}\right)M_{\al}(f)(x).
\end{split}
\ee
From this and the following estimate
\be
\begin{split}
  \sum_{j=0}^{\infty}\tilde\r\left(\frac{2^{2-j}|z|}{\d}\right)
  \leq &
  \sum_{j=0}^{\infty}\int_{2^{-j}}^{2^{-j+1}}\frac{\tilde\r(4t|z|/\d)}{2^{-j}}dt
  \lesssim
  \int_{0}^2\frac{\tilde\r(4t|z|/\d)}{t}dt
  =
  \int_{0}^{8|z|/\d}\frac{\tilde\r(t)}{t}dt,
\end{split}
\ee
we obtain
\be
\|I_1(\cdot,z)\|_{L^q(\om^q)}\lesssim \left(\int_{0}^{8|z|/\d}\frac{\tilde\r(t)}{t}dt\right)\|M_{\al}f\|_{L^q(\om^q)}
\lesssim \left(\int_{0}^{8|z|/\d}\frac{\tilde\r(t)}{t}dt\right)\|f\|_{L^p(\om^p)}\leq \int_{0}^{8|z|/\d}\frac{\tilde\r(t)}{t}dt.
\ee
This shows that $\|I_1(\cdot,z)\|_{L^q(\om^q)}\rightarrow 0$ as $|z|\rightarrow 0$.

Divide the second term $I_{2}(x,z)$ by
\be
\begin{split}
  I_2(x,z)
  = &
  \int_{\bbR^n}\big((b(x+z)-b(y))^m-(b(x)-b(y))^m\big)K_{\al}^{\d}(x,y)f(y)dy
  \\
  = &
  \int_{|x-y|>\d}\big((b(x+z)-b(y))^m-(b(x)-b(y))^m\big)K_{\al}(x,y)f(y)dy
  \\
  & +
  \int_{\d/2\leq |x-y|\leq\d}\big((b(x+z)-b(y))^m-(b(x)-b(y))^m\big)K^{\d}_{\al}(x,y)f(y)dy
  \\
  =: &
  I_{2,1}(x,z)+I_{2,2}(x,z).
\end{split}
\ee
Next, we write
\be
\begin{split}
  \left(b(x+z)-b(y))^m-(b(x)-b(y)\right)^m
  = &
  \left(b(x+z)-b(x)+b(x)-b(y))^m-(b(x)-b(y)\right)^m
  \\
  = &
  \sum_{i=1}^mC_m^i(b(x+z)-b(x))^i(b(x)-b(y))^{m-i}
  \\
  = &
  \sum_{i=1}^mC_m^i(b(x+z)-b(x))^i\sum_{j=0}^{m-i}C_{m-i}^jb(x)^jb(y)^{m-i-j}
\end{split}
\ee
Hence,
\be
\begin{split}
  |I_{2,1}(x,z)|
  \leq &
  \sum_{i=1}^mC_m^i|b(x+z)-b(x)|^i\sum_{j=0}^{m-i}C_{m-i}^j|b(x)|^j\left|\int_{|x-y|>\d}K_{\al}(x,y)b(y)^{m-i-j}f(y)dy\right|
  \\
  \leq &
  \sum_{i=1}^mC_m^i|b(x+z)-b(x)|^i\sum_{j=0}^{m-i}C_{m-i}^j|b(x)|^j|T_{K_{\al}}^*(b^{m-i-j}f)(x)|
  \\
  \lesssim &
  |z|\sum_{j=0}^{m-i}C_{m-i}^j|T_{K_{\al}}^*(b^{m-i-j}f)(x)|.
\end{split}
\ee
From this and the $L^p(\om^p)\rightarrow L^q(\om^q)$ boundedness of $T_{K_{\al}}^*$, we obtain
\be
\begin{split}
  \|I_{2,1}(\cdot,z)\|_{L^q(\om^q)}
  \lesssim
  |z|\cdot \sum_{j=0}^{m-i}C_{m-i}^j\|T_{K_{\al}}^*(b^{m-i-j}f)\|_{L^q(\om^q)}
  \lesssim
  |z|\cdot \|f\|_{L^p(\om^p)}
  \leq |z|.
\end{split}
\ee
On the other hand,
\be
\begin{split}
  \left|\int_{\d/2\leq |x-y|\leq\d}K^{\d}_{\al}(x,y)b(y)^{m-i-j}f(y)dy\right|
  \lesssim &
  \int_{\d/2\leq |x-y|\leq\d}|K^{\d}_{\al}(x,y)|\cdot |f(y)|dy
  \\
  \lesssim &
  \frac{1}{\d^{n-\al}}\int_{\d/2\leq |x-y|\leq\d}|f(y)|dy\lesssim M_{\al}(f)(x).
\end{split}
\ee
This and the similar estimate of $I_{2,1}$ imply that
\be
\begin{split}
  |I_{2,2}(x,z)|
  \leq &
  \sum_{i=1}^mC_m^i|b(x+z)-b(x)|^i\sum_{j=0}^{m-i}C_{m-i}^j|b(x)|^j\left|\int_{\d/2\leq |x-y|\leq\d}K^{\d}_{\al}(x,y)b(y)^{m-i-j}f(y)dy\right|
  \\
  \lesssim &
  |z|M_{\al}(f)(x).
\end{split}
\ee
Hence,
\be
\|I_{2,2}(\cdot,z)\|_{L^q(\om^q)}\lesssim |z|\|M_{\al}f\|_{L^q(\om^q)}\lesssim |z|\|f\|_{L^p(\om^p)}\leq |z|.
\ee
It follows from above estimates of $I_1$, $I_{2,1}$ and $I_{22}$ that
\be
  \|(T_{K_\al^{\d}})_b^m(f)(\cdot+z)-(T_{K_\al^{\d}})_b^m(f)(\cdot)\|_{L^q(\om^q)}\rightarrow 0,
\ee
as $|z|\rightarrow 0$, uniformly for all $f$ with $\|f\|_{L^p(\om^p)}\leq 1$.
\end{proof}

\begin{remark}
  By a slight modification of the above proof, one can verify that the result of Theorem \ref{theorem, sufficiency} is still valid in the multilinear setting. More precisely, for
  a sequence of functions $b_j\in \cmo$, $j=1,2,\cdots,m$, $(T_{K_{\al}})_{\mathbf{b}}^m$
  is a compact operator from $L^p(\om^p)$ to $L^q(\om^q)$.
\end{remark}

\end{document}